\definecolor{citation}{rgb}{0.2,0.6,0.2}
\definecolor{formula}{rgb}{0.1,0.2,0.6}
\definecolor{url}{rgb}{0,0,0.4}
\newlength{\defbaselineskip}
\newcommand{\setlinespacing}[1]
           {\setlength{\baselineskip}{#1 \defbaselineskip}}
\numberwithin{equation}{section}
\def\vs{\vspace{1mm}}
\def\dd{d_{\textrm{o}}}
\DeclareRobustCommand*{\bfseries}{%
	\not@math@alphabet\bfseries\mathbf
	\fontseries\bfdefault\selectfont
	\boldmath
}
\def\vs{\vspace{1mm}}
\def\dd{d_{\textrm{o}}}
\DeclareRobustCommand*{\bfseries}{%
	\not@math@alphabet\bfseries\mathbf
	\fontseries\bfdefault\selectfont
	\boldmath
}
\numberwithin{equation}{section}
\definecolor{forestgreen(web)}{rgb}{0.13, 0.55, 0.13}
\definecolor{darkpowderblue}{rgb}{0.0, 0.2, 0.6}
\definecolor{indigo(dye)}{rgb}{0.0, 0.25, 0.42}
\definecolor{byzantium}{rgb}{0.44, 0.16, 0.39}
\definecolor{cornellred}{rgb}{0.7, 0.11, 0.11}
\def \R {{\mathds {R}}}
\def \e{{\varepsilon}}
\def \p{{\phi}}
\def \l{{\mathcal{L}}}
\def \r{{\mathds{R}}}
\def \h{{\mathds{H}}}
\def\Xint#1{\mathchoice
	{\XXint\displaystyle\textstyle{#1}}%
	{\XXint\textstyle\scriptstyle{#1}}%
	{\XXint\scriptstyle\scriptscriptstyle{#1}}%
	{\XXint\scriptscriptstyle\scriptscriptstyle{#1}} %
	\!\int}
\def\XXint#1#2#3{{\setbox0=\hbox{$#1{#2#3}{\int}$}
		\vcenter{\hbox{$#2#3$}}\kern-.5\wd0}}
\def\dashint{\Xint-}
\begin{document}

	\title{{\normalsize \rm to appear in\,} {\textrm {\it \large J. Geom. Anal.} {\large\bf 33} {\rm \large(2023), no.~3, \normalsize Art.~77.}} \\[3ex] H\"older continuity and boundedness estimates for nonlinear fractional equations in the Heisenberg group\thanks{
	The authors are members of~\,``Gruppo Nazionale per l'Analisi Matematica, la Probabilit\`a e le loro Applicazioni (GNAMPA)'' of Istituto Nazionale di Alta Matematica (INdAM).  
	The second author is supported by the University of Parma via the project ``Regularity, Nonlinear Potential Theory and related topics''.  \\ The authors are in debt with the referee for his/her deep understanding of  this paper, and for his/her suggestions which contributed to improve the main results. }}

   \author{Maria Manfredini \and Giampiero Palatucci \and Mirco Piccinini \and Sergio Polidoro}

\institute{M. Manfredini \at 
          Dipartimento di Scienze Fisiche, Informatiche e Matematiche\\
          Universit\`a degli Studi di Modena e Reggio Emilia\\ Via G. Campi 213/B, 41121 Modena, Italy\\
          \email{maria.manfredini@unimore.it}
          \and
          G. Palatucci \at 
          Dipartimento di Scienze Matematiche, Fisiche e Informatiche,\\
          Universit\`a di Parma\\
          Parco Area delle Scienze 53/a, Campus, 43124 Parma, Italy\\
          \email{giampiero.palatucci@unipr.it}
         \and M. Piccinini \at 
         Dipartimento di Scienze Matematiche, Fisiche e Informatiche,\\
          Universit\`a di Parma\\
          Parco Area delle Scienze 53/a, Campus, 43124 Parma, Italy\\\email{mirco.piccinini@unipr.it}
          \and S. Polidoro \at
          Dipartimento di Scienze Fisiche, Informatiche e Matematiche\\
          Universit\`a degli Studi di Modena e Reggio Emilia\\ Via G. Campi 213/B, 41121 Modena, Italy\\
          \email{sergio.polidoro@unimore.it}
          }

       \titlerunning{Nonlinear fractional equations in the Heisenberg group}
	\maketitle
	
	\begin{abstract}
		We extend the celebrate De Giorgi-Nash-Moser theory to a wide class of nonlinear equations driven by nonlocal, possibly degenerate, integro-differential operators, whose model is the fractional $p$-Laplacian operator on the Heisenberg-Weyl group $\h^n$. Amongst other results, we  prove that the weak solutions to such a class of problems are bounded and H\"older continuous, by also establishing general estimates as fractional Caccioppoli-type estimates with tail and logarithmic-type estimates.
	
	\keywords{Quasilinear nonlocal operators \and fractional Sobolev spaces \and H\"older continuity \and Heisenberg group \and fractional sublaplacian\vspace{1mm}}
 \subclass{35D10 \and 35B45 \and
		 35B05 \and 35H05 \and 35R05 \and 47G20\vspace{1mm}} 
   
	\end{abstract}

\vspace{1cm}

	\setcounter{equation}{0}\setcounter{theorem}{0}

	%
	%
	
	\vspace{2mm}
	
%
	\setlinespacing{1.02}
	
	\section{Introduction}
	The aim of the present paper is to prove some regularity results for the (weak) solutions to a wide class of nonlinear integro-differential equations.
	
	Let $\Omega$ be an open bounded subset of the Heisenberg group~$\h^n$, for $n\geq 1$, the class of problems we are dealing with are the following,
	\begin{equation}\label{problema}
		\begin{cases}
			\l u   = f & \text{in} \ \Omega,\\[0.5ex]
			u   =g & \text{in} \ \h^n \smallsetminus \Omega,
		\end{cases}
	\end{equation}
where  
	the nonlocal boundary datum $g$ belongs to the fractional space~$ W^{s,p}(\h^n)$, the datum $f \equiv f(\cdot,u) \in L^\infty_{\textrm{loc}}(\h^n)$ locally uniformly in~$\Omega$, and
	the leading operator~$\l$ is an integro-differential operator of differentiability exponent $s\in(0,1)$ and summability exponent $p>1$ given by
	\begin{equation}
		\label{operatore}
		\l u (\xi) = P.~\!V. \int_{\h^n}\frac{|u(\xi)-u(\eta)|^{p-2}\big(u(\xi)-u(\eta)\big)}{\dd(\eta^{-1}\circ \xi)^{Q+sp}}\,{\rm d}\eta, \qquad \xi \in \h^n,
	\end{equation}
	with $\dd$ being a homogeneous norm on $\h^n$, and $Q=2n+2$ the usual homogeneous dimension of $\h^n$.
	In the display above, the symbol $P.~\!V.$ stands for ``in the principal value sense''.
	We immediately refer the reader to Section~\ref{sec_preliminaries} for the precise definitions of the involved quantities and related properties, as well as further observations in order to relax some of the assumptions presented above.
	
	As a model example in the class of the problems in~\eqref{problema}, one can consider 
	the classic fractional Dirichlet problem, despite in such a case the difficulties arising from the nonlinear growth in the definition in~\eqref{operatore} actually disappear; 
	that is,
	\begin{equation}\label{pb_fractio}
		\begin{cases}
			(-\Delta_{\h^n})^s u   = 0 & \text{in} \ \Omega,\\[0.5ex]
			u   =g & \text{in} \ \h^n \smallsetminus \Omega,
		\end{cases}
	\end{equation}
	where as usual
	the symbol~$(-\Delta_{\h^n})^s$ refers to the fractional  
	sublaplacian on the Heisenberg group, defined in the suitable fractional Sobolev spaces~$H^s(\h^n)$ for any $s\in (0,1)$ as follows, 
	\begin{equation}\label{def_subf}
		(-\Delta_{\h^n})^su(\xi):=C(n,s)\lim_{\delta \rightarrow 0^+}\int_{\h^n \smallsetminus B_\delta(\xi)}\frac{u(\xi)-u(\eta)}{|\eta^{-1}\circ \xi|_{\h^n}^{Q+2s}}\,{\rm d}\eta, \qquad \xi \in \h^n.  	\end{equation}
	In the Gagliardo-type kernel above the symbol $|\cdot|_{\h^n}$ denotes the standard Heisenberg gauge.
	\vspace{2mm}
	
	In the last decades, a great attention has been focused on the study of problems involving fractional equations, both from a pure mathematical point of view and for concrete applications since they naturally arise in many different contexts. Despite its relatively short history, the literature is really too wide to attempt any comprehensive treatment in a single paper; we refer for instance to the paper~\cite{DPV12} for an elementary introduction to fractional Sobolev spaces and for a quite extensive (but still far from exhaustive) list of related references. For what concerns the regularity  and related results, the theory in the fractional Euclidean case has very recently shown many fundamental progresses. In this respect, by solely focusing on the specific goal of the present manuscript, which does basically consist in the generalization of the celebrated De Giorgi-Nash-Moser theory to the fractional Heisenberg setting, even in the {\it nonlinear} case when $p\neq2$, many important results for the Euclidean counterpart of~\eqref{problema} have been obtained, as for instance boundedness, Harnack inequalities, and H\"older continuity (up to the boundary) for the fractional $p$-Dirichlet problem in~\cite{DKP14,DKP16,KKP16,IMS16,BLS18}; for a survey on the results in the aforementioned papers and other related investigations we refer the interested reader to the paper~\cite{Pal18}.
	
	\vspace{2mm}

	For what concerns specifically  {\it  the fractional panorama in the Heisenberg group}, we first stress that one can find different definitions of the involved operator and related extremely different approaches. In the case when $p=2$, an explicit integral definition can be found in the relevant paper~\cite{RT16}, where several Hardy inequalities for the conformally invariant fractional powers of the sublaplacian are proven, also paying attention to the sharpness of the involved constants, and thus extending to the Heisenberg group some of the important results in~\cite{FLS08}, as well as extending to the fractional framework some Heisenberg type uncertainty inequalities proven for the sublaplacian in~\cite{DGP11}. We refer also to~\cite{CCR15} for related Hardy and uncertainty inequalities on general stratified Lie groups, involving fractional powers of the Laplacian, and also to~\cite{AM18}, where, amongst other important results, Sobolev and Morrey-type embeddings are derived for fractional order Sobolev spaces.  
	
	Still in the linear case when~$p=2$, very relevant results have been obtained based on the construction of fractional operators via  a Dirichlet-to-Neumann map associated to degenerate elliptic equation, as firstly seen for the Euclidean framework in~\cite{CS07}. For this, we would like to mention the very general approach in the recent series of papers~\cite{GT21,GT21b}; the Liouville-type theorem in~\cite{CT16}; the Harnack and H\"older results in Carnot groups in~\cite{FF15}; the results in the context of CR geometry in~\cite{FGMT15}; the connection with the fractional perimeters of sets in Carnot group in~\cite{FMPPS18}.

	For what concerns the more general situation as in~\eqref{operatore} when a $p$-growth exponent is considered, in our knowledge, a regularity theory is very far from be complete; nonetheless, very interesting estimates have been proven, as, e.~\!g., in~\cite{KS18,WD20,KS20}, and fundamental functional inequalities have been very recently obtained in the nonlocal framework even for more general metric spaces (see~\cite{DLV21}).

	\vspace{2mm}

	It is worth noticing that the equation~in~\eqref{problema} inherits both the difficulties arising from the noneuclidean geometrical structure and those naturally arising from the nonlocal character of the involved  integro-differential operators. More than this, 
	it is worth pointing out that the fractional operators~$\mathcal{L}$ in~\eqref{operatore} present as well the typical issues given by their {\it nonlinear} growth behavior. For this, some very important tools recently introduced in the nonlocal theory and successfully applied in the fractional sublaplacian on the Heisenberg group, as the celebrated Caffarelli-Silvestre (\!\!\cite{CS07}) $s$-harmonic extension mentioned above, and the approach via Fourier representation, 
	as well as
	other  successful tricks, like for instance the pseudo-differential commutator compactness in~\cite{PP14}, 
	the commutator estimates in~\cite{Sch16}, and many others, seem not to be adaptable to the framework considered here. However, even in the nonlinear noneuclidean framework considered here, we will be able to extend part of the strategy developed in~\cite{DKP14,DKP16} where it has been introduced a special quantity, {\it the nonlocal tail} of a fractional function, which has revealed to play a fundamental role to understand the nonlocality of the nonlinear operator~$\mathcal{L}$.
	In our settings, the nonlocal tail of a function $u \in W^{s,p}(\h^n)$ in a ball $B_R(\xi_0) \subset \h^n$ of radius $R>0$ and centered in $\xi_0\in\h^n$, is defined as follows,
	\begin{equation}
		\label{def_tail}
		\textup{Tail}(u;\xi_0,R):= \left(R^{sp} \int_{\h^n \smallsetminus B_R(\xi_0)}|u(\xi)|^{p-1}|\xi_0^{-1} \circ \xi|_{\h^n}^{-Q -sp}\,{\rm d}\xi\right)^{\frac{1}{p-1}}\!.
	\end{equation}
	In the standard Euclidean framework, the nonlocal tail has already proven to be a key-point in the proofs when a fine quantitative control of the long-range interactions, naturally arising when dealing with nonlocal operators as in~\eqref{operatore}, is needed. As mentioned before, right after its introduction, this quantity
	has been subsequently used in many recent results on nonlinear fractional equations; see for instance the subsequent results proven in~\cite{KMS15,KKP16,IMS16,KKP17,BLS18} and the references therein.
	\vspace{2mm}
	
	We are now in a position to state our main results. Here below we assume that the datum $f \equiv f(\cdot,u)$ belongs to $L^\infty_{\textrm{loc}}(\h^n)$ locally uniformly in~$\Omega$. However, as we will remark in forthcoming Section~\ref{sec_main}, such an assumption can be suitably replaced; see in particular Remark~\ref{rem_datum} there.
	Our first result describes the local boundedness of weak subsolutions.
	\begin{theorem}[{\bfseries Local boundedness}]\label{teo_bdd}
		Let $s \in (0,1)$ and $p \in (1,\infty)$, let $u \in W^{s,p}(\h^n)$ be a weak subsolution to~\textup{(\ref{problema})}, and let $B_r \equiv B_r(\xi_0)   \subset \Omega$. Then the following estimate holds true, for any $\delta \in (0,1]$,
		\begin{equation}		\label{eq_bdd}
			\sup_{B_{r/2}}u \, \leq\,   \delta \,\textup{Tail}(u_+; \xi_0, r/2) + \textbf{c}\,\delta^{-\frac{(p-1)Q}{sp^2}}  \left(\,\dashint_{B_r}u_+^p{\rm d}\xi\right )^\frac{1}{p}, 
		\end{equation}
		where $\textup{Tail}(u_+;\xi_0,r/2)$ is defined in~\eqref{def_tail}, $u_+:=\max\left\{u,\, 0\right\}$ is the positive part of the function $u$,  and the constant $\textbf{c}$ depends only on $n,p,s$ and $\|f\|_{L^\infty(B_r)}$.
	\end{theorem}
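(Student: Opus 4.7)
The plan is to follow the De~Giorgi level-set iteration strategy that has been developed in the Euclidean nonlocal nonlinear setting (\!\!\cite{DKP16}), suitably adapted to the sub-Riemannian geometry of $\h^n$ with the homogeneous gauge $\dd$ replacing the Euclidean distance and the homogeneous dimension $Q=2n+2$ playing the role of the Euclidean one. The two fundamental analytic ingredients we rely on are, first, a fractional Caccioppoli-type inequality with tail for weak subsolutions of~\eqref{problema}, namely an estimate controlling $[w\tau^{p/s}]_{W^{s,p}(B)}^p$, where $w=(u-k)_+$ and $\tau$ is a smooth cut-off supported in $B$, by the $L^p$-mass of $w$ on a larger ball times the supremum of $|\nabla_{\mathbb{H}^n}\tau|^{sp}$, plus a tail remainder of the form $(\sup w)\,|\{w>0\}|\,\textup{Tail}(u_+;\xi_0,r)^{p-1}$ (this is presumably established in Section~\ref{sec_main} prior to the proof); second, the fractional Sobolev embedding on $\h^n$ in the form $\|v\|_{L^{p^\ast}(B)}\le C[v]_{W^{s,p}(B)}+C r^{-s}\|v\|_{L^p(B)}$ for a suitable gain exponent (either $p^\ast=Qp/(Q-sp)$ when $sp<Q$, or any $\kappa>1$ otherwise).

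With these two tools in hand, I would set up the standard dyadic iteration. Fix $k>0$ to be chosen, and define, for $j\in\mathbb{N}$,
\begin{equation*}
r_j=\tfrac{r}{2}+\tfrac{r}{2^{j+1}},\qquad \tilde r_j=\tfrac{r_j+r_{j+1}}{2},\qquad B_j=B_{r_j}(\xi_0),\qquad k_j=(1-2^{-j})k,\qquad w_j=(u-k_j)_+,
\end{equation*}
together with cut-offs $\tau_j\in C^\infty_0(B_{\tilde r_j})$ with $\tau_j\equiv 1$ on $B_{j+1}$ and $|\nabla_{\mathbb{H}^n}\tau_j|\lesssim 2^j/r$. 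Applying the Caccioppoli inequality to $w_j$ on $B_j$ with the cut-off $\tau_j$, combined with the Sobolev embedding on $B_{\tilde r_j}$ applied to $w_j\tau_j^{p/s}$, and using the obvious containments $\{w_{j+1}>0\}\subset\{w_j>2^{-(j+1)}k\}$ to convert $L^p$-bounds at level $k_{j+1}$ into $L^{p^\ast}$-bounds at level $k_j$ through Chebyshev, one arrives at a recursive estimate of the form
\begin{equation*}
A_{j+1}\,\le\,\textbf{c}\,\frac{2^{j\,\sigma}}{k^{\sigma-p}}\,A_j^{1+\beta}\,+\,\textbf{c}\,\frac{2^{j\,\sigma}}{k^{p-1}}\,\textup{Tail}(u_+;\xi_0,r/2)^{p-1}\,A_j,
\end{equation*}
where $A_j:=\dashint_{B_j}w_j^p\,{\rm d}\xi$, and where $\beta>0$ and $\sigma>0$ depend only on $n,p,s$; the term involving $\|f\|_{L^\infty(B_r)}$ enters by bounding the forcing contribution in the Caccioppoli inequality and can be absorbed into $\textbf{c}$ after using Young's inequality.

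At this point the conclusion follows from the classical geometric iteration lemma: if $k$ is chosen large enough that
\begin{equation*}
A_0=\dashint_{B_r}(u-k/2)_+^p\,{\rm d}\xi\,\le\,c_\ast\,k^{p/\beta}\quad \text{and}\quad k\ge \delta\,\textup{Tail}(u_+;\xi_0,r/2),
\end{equation*}
then $A_j\to 0$, which gives $u\le k$ on $B_{r/2}$. Choosing $k:=\delta\,\textup{Tail}(u_+;\xi_0,r/2)+\textbf{c}\delta^{-\alpha}(\dashint_{B_r}u_+^p)^{1/p}$ with the exponent $\alpha=(p-1)Q/(sp^2)$ dictated by the scaling balance between the iteration gain $\beta=sp/(Q-sp)$ (in the subcritical case) and the $L^p$ deficit yields exactly~\eqref{eq_bdd}. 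The main obstacle is the careful bookkeeping in the Caccioppoli-plus-Sobolev step in the Heisenberg setting: one must make sure that the cut-off $\tau_j$ is truly admissible in the Gagliardo seminorm built from the homogeneous gauge $\dd$ (which is only equivalent, not equal, to the Carnot-Carath\'eodory distance), and that the long-range piece of the double integral $\iint|w(\xi)\tau(\xi)-w(\eta)\tau(\eta)|^p/\dd(\eta^{-1}\circ\xi)^{Q+sp}$ is reabsorbed into the tail~\eqref{def_tail} by exploiting the estimate $\dd(\eta^{-1}\circ\xi)\ge c\,\dd(\xi_0^{-1}\circ\eta)$ for $\xi\in\text{supp}\,\tau\Subset B_{r/2}$ and $\eta\in\h^n\smallsetminus B_r$; once this Heisenberg-compatible tail bound is in place, the iteration is essentially bookkeeping.
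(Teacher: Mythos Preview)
Your proposal follows essentially the same route as the paper: De~Giorgi iteration on dyadic levels and radii, with the Caccioppoli inequality with tail (Theorem~\ref{teo_caccioppoli}) feeding into the fractional Sobolev embedding (Theorem~\ref{sobolev}), and the choice $k\ge\delta\,\textup{Tail}$ to absorb the nonlocal contribution; the final exponent $(p-1)Q/(sp^2)$ you identify is exactly the one the paper obtains. One point to tighten: the two-term recursion you write, with a summand \emph{linear} in $A_j$ carrying a geometrically growing coefficient $2^{j\sigma}$, does not iterate to zero as stated. The paper avoids this by using the pointwise bound $\tilde w_j\le w_j^{\,p}/(\tilde k_j-k_j)^{p-1}$ to convert the linear integral $\int \tilde w_j$ appearing in the Caccioppoli tail term into an $\int w_j^{\,p}$, so that after Sobolev the recursion becomes genuinely single-term and superlinear, namely $A_{j+1}/\tilde k\le \bar c\,C^{\,j}(A_j/\tilde k)^{1+\beta}$, with the tail absorbed into the constant $\bar c$ once $\tilde k\ge\delta\,\textup{Tail}(w_0;\xi_0,r/2)$. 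The paper also treats the borderline case $sp=Q$ explicitly, by passing to an auxiliary exponent $s_1<s$ and invoking Proposition~\ref{sobolev immersion} before applying the Sobolev inequality; your parenthetical ``any $\kappa>1$ otherwise'' is the right idea but this step does require a separate argument.
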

	
	We would like to stress the presence of the parameter~$\delta$ which allows an interpolation between the local and nonlocal terms in~\eqref{eq_bdd}. 
	In our knowledge, the boundedness result presented in Theorem~\ref{teo_bdd} above is new even in the linear case when~$p=2$.
	
	The second result provides the desired local H\"older continuity for the weak solutions to problem~\eqref{problema}. As expected, the nonlocal tail of the solutions naturally arises in the main estimate.
	\begin{theorem}[{\bfseries H\"older continuity}]\label{teo_holder}
		Let $s \in (0,1)$, $p \in (1,\infty)$, and let $u \in W^{s,p}(\h^n)$ be a solution to~\textup{(\ref{problema})}. Then $u$ is locally H\"older continuous in~$\Omega$. In particular, there are constants $\alpha < sp/(p-1)$ and $\textbf{c}>0$, both depending only on $n,p,s$ and $\|f\|_{L^\infty(B_r)}$, such that if $B_{2r}(\xi_0) \subset \Omega$ then
		\begin{equation}
			\mathop{\textup{osc}}\limits_{B_\rho (\xi_0)} \, u 
			\, \leq\,  \textbf{c}\, \left(\frac{\rho}{r}\right)^\alpha \left[\textup{Tail}(u;\xi_0,r) + \left(\,\dashint_{B_{2r}(\xi_0)}|u|^p \,{\rm d}\xi\right)^\frac{1}{p}\right],
		\end{equation} 
		for every $\rho\in (0,r)$.
	\end{theorem}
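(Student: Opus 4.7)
The plan is to run a nonlocal De Giorgi--Nash--Moser iteration along a dyadic sequence of concentric balls, adapting to the Heisenberg structure the scheme introduced in the Euclidean setting in~\cite{DKP14,DKP16}. Fix $B_{2r}(\xi_0)\subset\Omega$, set $r_j:=\sigma^j r$ for a parameter $\sigma\in(0,1/4)$ to be chosen, and construct inductively two monotone sequences $m_j\le M_j$ with $m_j\le u\le M_j$ on~$B_{r_j}$. The quantitative target is the decay
\begin{equation*}
\omega_j:=M_j-m_j\,\le\,C\,\sigma^{\alpha j}\left[\textup{Tail}(u;\xi_0,r)+\left(\dashint_{B_{2r}(\xi_0)}|u|^p\,{\rm d}\xi\right)^{\frac{1}{p}}\right],
\end{equation*}
for a suitable $\alpha<sp/(p-1)$; picking $j$ with $r_{j+1}\le\rho\le r_j$ and using $\sigma^{\alpha j}\sim(\rho/r)^\alpha$ then yields the claimed estimate on the oscillation over~$B_\rho(\xi_0)$.

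Two preparatory lemmas are needed on~$\h^n$, and both should be proved beforehand in the paper: (i) a fractional Caccioppoli-type inequality with tail for weak sub/supersolutions, obtained by testing the equation against $\varphi^p(u-k)_{\pm}$ with $\varphi$ a left-invariant cut-off and exploiting the $p$-Laplacian algebraic identity; and (ii) a logarithmic estimate, derived by testing against $(u-k+d)^{1-p}\varphi^p$ for a small shift $d>0$, which bounds a $|\log|^p$-type double integral of $u-k+d$ by $|B_r|$ plus a tail correction. Both derivations rely only on algebraic identities behind the leading operator, on left-invariance of the Haar measure on~$\h^n$, and on the pseudo-triangle inequality for~$\dd$, hence they adapt from the Euclidean proofs without substantial modification.

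The inductive oscillation step at scale~$j$ proceeds by a measure alternative. Setting $v_j:=u-(M_j+m_j)/2$, at least one of the level sets $\{v_j\le 0\}$, $\{v_j\ge 0\}$ covers at least half of~$B_{r_j/2}$; suppose without loss of generality the former. The logarithmic estimate, applied iteratively on a finite chain of dyadic subballs, transfers this measure information into a quantitative $L^p$-mean smallness of $(v_j+\varepsilon\,\omega_j)_+$ on $B_{r_{j+k_0}}(\xi_0)$, for an integer $k_0$ depending only on the data. At this point I would invoke the local boundedness result of Theorem~\ref{teo_bdd}: by tuning the interpolation parameter~$\delta$ so as to absorb the nonlocal tail into a fixed fraction of~$\omega_j$, one obtains $\omega_{j+1}\le(1-\theta)\,\omega_j+(\text{tail remainder})$ for a $\theta\in(0,1)$ independent of~$j$, which together with the inductive hypothesis closes the step and, after summation of the resulting geometric series, produces the Hölder decay with exponent $\alpha<sp/(p-1)$.

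The main obstacle I anticipate is the simultaneous bookkeeping of the tails across scales: the tail of the shifted function at radius~$r_j$ must be controlled by the previous oscillation~$\omega_j$, which forces a delicate coupling between the parameters $(\sigma,\theta,\delta,k_0)$ and the constraint $\alpha<sp/(p-1)$. A secondary technical point is verifying that the doubling and left-translation properties used in the Euclidean tail manipulations transfer cleanly to the homogeneous norm~$\dd$ and the homogeneous dimension $Q=2n+2$ on~$\h^n$; these are routine but need a careful dyadic comparison of tails along the sequence of scales in order to be correctly encoded into the inductive hypothesis.
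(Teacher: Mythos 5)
Your roadmap matches the paper's in broad outline---dyadic scales $r_j=\sigma^j r/2$, a strong induction on $\operatorname{osc}_{B_j}u\le\omega(r_j)=(r_j/r_0)^\alpha\omega(r_0)$, the measure alternative on $u_j$ (the suitably centred nonnegative version of $u$ on $B_j$), the logarithmic estimate to convert the measure alternative into smallness of $\bigl|\,2B_{j+1}\cap\{u_j\le 2d\,\omega(r_j)\}\bigr|$, and then an expansion-of-positivity step to shave $d\,\omega(r_j)$ off the oscillation. Two points are worth flagging, one a genuine difference of route, one an imprecision.

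First, the paper does \emph{not} invoke Theorem~\ref{teo_bdd} in the inductive step; it uses Theorem~\ref{teo_bdd} only to start the induction at $j=0$ (with $\delta=1$). The oscillation reduction is instead obtained by re-running a fresh Caccioppoli--Sobolev De Giorgi iteration on the truncations $w_i=(k_i-u_j)_+$, $k_i=(1+2^{-i})\,d\,\omega(r_j)$, with the tail $\mathrm{Tail}(w_i;\xi_0,r_{j+1})$ controlled via the strong-induction estimate $[\mathrm{Tail}(u_j;\xi_0,r_j)]^{p-1}\le\mathbf{c}\,\sigma^{-\alpha(p-1)}[\omega(r_j)]^{p-1}$, which is derived by a dyadic-annulus decomposition using the hypothesis at \emph{all} previous scales $i\le j$. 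Your idea of calling Theorem~\ref{teo_bdd} as a black box and using the interpolation parameter $\delta$ to absorb the tail of $(2d\,\omega(r_j)-u_j)_+$ is a legitimate alternative packaging of the same iteration, and is a nice observation, but it requires some care: the factor $\delta^{-(p-1)Q/(sp^2)}$ degenerates as $\delta\to 0$, and the choice of $\delta$ is itself constrained by the tail bound, which carries a $\sigma^{-\alpha(p-1)}$ factor; the circularity is resolved only by the precise order of quantifiers (fix $\delta$ universally, then $\sigma$, then $\alpha$), exactly the ``delicate coupling'' you mention. Whichever packaging you adopt, you still need the dyadic tail estimate at scale $r_j$---that is the actual heart of the proof and you should state it explicitly.

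Second, two steps in your description are imprecise. The logarithmic estimate (via Corollary~\ref{s2 corol}) is applied \emph{once}, on the single ball $2B_{j+1}$, not ``iteratively on a finite chain of dyadic subballs''; it directly yields $|2B_{j+1}\cap\{u_j\le 2d\,\omega(r_j)\}|/|2B_{j+1}|\le c_{\log}/\log(1/\sigma)$, which is then made smaller than the critical-mass threshold by taking $\sigma$ small. And the oscillation reduction in the paper is $\operatorname{osc}_{B_{j+1}}u\le(1-d)\,\omega(r_j)\le\omega(r_{j+1})$ with \emph{no} remainder term and no geometric-series summation: the tail contribution is absorbed entirely into the inductive ansatz $\omega(r_i)=(r_i/r_0)^\alpha\omega(r_0)$, so the decay closes exactly once $\sigma^\alpha\ge 1-d=1-\sigma^{sp/(p-1)}$, which is what forces $\alpha<sp/(p-1)$. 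Your ``$\omega_{j+1}\le(1-\theta)\omega_j+(\text{tail remainder})$'' followed by summation would need the remainder itself to decay geometrically, which is circular unless you encode the decay in the inductive hypothesis as the paper does. These are fixable, but worth straightening out before writing the argument.
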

	The theorem above provides an extension of the classical results by De Giorgi-Nash-Moser  
	to the nonlocal framework on the Heisenberg group. In the linear case, when $p=2$, 
for what concerns classical H\"older regularity results for linear integro-differential operators in a very wide class of metric measure spaces, we refer to the important paper by Chen et Kumagai~\cite{CK08}. 
	Still in the linear case, it is also worth mentioning some related  regularity results in~\cite{FF15}, where the authors deal with linear operators related to~\eqref{operatore} 
	 by making use of the Neumann-to-Dirichlet extension, which -- as said above -- is not applicable in our nonlinear setting. 
	
	\vspace{2mm}
	
	In both the proof of the H\"older continuity result and that of the local boundedness one, a crucial role is played by the precise estimates stated in the following theorems, the Caccioppoli-type estimate (see~Theorem~\ref{teo_caccioppoli} below) and the logarithmic-type one (see forthcoming Lemma~\ref{lem_log}). We believe that these results could have their own interest in the analysis of equations involving the (nonlinear) fractional sublaplacian on the Heisenberg group, and related integro-differential operators. The first of them states a natural extension in our framework of the Caccioppoli inequality with tail, by showing that even in such a noneuclidean case one can take into account a suitable tail in order to detect deeper informations on the regularity of the solutions.
	
	\begin{theorem}[{\bfseries Caccioppoli estimates with tail}]\label{teo_caccioppoli}
		Let  $s \in (0,1)$, $p \in (1,\infty)$, and let $u \in W^{s,p}(\h^n)$ be a weak subsolution to~\eqref{problema}. Then, for any~$B_r \equiv B_r(\xi_0) \subset \Omega$ and any~nonnegative $\p \in C^\infty_0(B_r)$, the following estimate holds true
		
		\begin{align}\label{caccioppoli}
			\int_{B_r} &\int_{B_r} |\eta^{-1} \circ \xi|_{\h^n}^{-Q-sp}  |w_+(\xi)\p(\xi)-w_+(\eta)\p(\eta)|^p \, \,{\rm d}\xi \,{\rm d}\eta\notag\\*
			& \leq \textbf{c}\int_{B_r}\int_{B_r} |\eta^{-1} \circ \xi|_{\h^n}^{-Q-sp} w_+^p(\xi)|\p(\xi)-\p(\eta)|^p \, \,{\rm d}\xi \,{\rm d}\eta\\*
			&\quad+ \textbf{c} \int_{B_r}w_+(\xi)\p^p(\xi) \, \,{\rm d}\xi \biggl(\sup_{\eta \in \textup{supp}\, \p}\int_{\h^n \smallsetminus B_r} |\eta^{-1} \circ \xi|_{\h^n}^{-Q-sp} w_+^{p-1}(\xi) \, \,{\rm d}\xi\notag   \\*
			& \hspace{4.5cm} +\|f\|_{L^\infty(B_r)}\biggr)\notag
		\end{align}
		where $w_+ := (u-k)_+$ with $k\in\mathds{R}$, and $\textbf{c}=\textbf{c}\,(
		n,p,s)$.
	\end{theorem}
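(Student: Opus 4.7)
The plan is to run, in the Heisenberg setting, the Caccioppoli-with-tail scheme developed for the Euclidean fractional $p$-Laplacian in~\cite{DKP14,DKP16}. The crucial observation is that the scheme is essentially algebraic: the non-euclidean geometry of $\h^n$ enters only through the symmetry $|\xi^{-1}|_{\h^n} = |\xi|_{\h^n}$ and the bi-invariance of the Lebesgue measure under the group law. First I would test the weak subsolution inequality against $\varphi := w_+\,\phi^p$, which is an admissible compactly supported test function on $B_r$, thereby obtaining the master identity
$$\int_{\h^n}\!\!\int_{\h^n} |\eta^{-1}\circ\xi|_{\h^n}^{-Q-sp}\,|U|^{p-2} U \, \bigl(w_+(\xi)\phi^p(\xi) - w_+(\eta)\phi^p(\eta)\bigr)\, d\xi\, d\eta \,\leq\, \int_{B_r} f(\cdot,u) \, w_+ \phi^p\, d\xi,$$
where $U := u(\xi) - u(\eta)$; the right-hand side is already controlled by $\|f\|_{L^\infty(B_r)}\int_{B_r} w_+\phi^p$, which fits into the last bracket of~\eqref{caccioppoli}.

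Next I would split the double integral on the left into the local contribution on $B_r \times B_r$ and the two symmetric nonlocal pieces on $B_r \times (\h^n\setminus B_r)$ and its $\xi\leftrightarrow\eta$ counterpart; the symmetrization is legitimate because $|\eta^{-1}\circ\xi|_{\h^n} = |\xi^{-1}\circ\eta|_{\h^n}$ and Lebesgue measure is bi-invariant on $\h^n$. On the local part I would apply the purely algebraic pointwise inequality
$$|a-b|^{p-2}(a-b)\bigl(a_+ \tau_1^p - b_+ \tau_2^p\bigr) \,\geq\, \tfrac{1}{C}\bigl|\tau_1 a_+ - \tau_2 b_+\bigr|^p - C\,(a_+ + b_+)^p\,|\tau_1-\tau_2|^p,$$
valid for $a,b \in \r$, $\tau_1,\tau_2 \geq 0$ with $C=C(p)$. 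Taking $a=u(\xi)-k$, $b=u(\eta)-k$, $\tau_1 = \phi(\xi)$, $\tau_2 = \phi(\eta)$ and integrating against the symmetric kernel $|\eta^{-1}\circ\xi|_{\h^n}^{-Q-sp}$ produces the left-hand side of~\eqref{caccioppoli} on the good side and only the first $\phi$-oscillation term of~\eqref{caccioppoli} as a correction on the right.

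For the nonlocal piece, the support condition on $\phi$ kills the $\phi(\eta)$ term when $\eta \in \h^n \setminus B_r$, so the integrand reduces to $|U|^{p-2} U \cdot w_+(\xi)\phi^p(\xi)$ under the kernel. Splitting according to the sign of $U$, discarding the positive contribution on $\{u(\xi) > u(\eta)\}$, and using the elementary bound $|U|^{p-2} U \geq -\,w_+^{p-1}(\eta)$ on the complementary set (which follows from $u(\eta)-u(\xi) \leq u(\eta)-k$ whenever $u(\xi)\geq k$, while the integrand vanishes when $u(\xi) < k$), I would lower-bound the nonlocal piece by a negative multiple of the tail-type quantity
$$\int_{B_r} w_+(\xi)\phi^p(\xi)\biggl(\sup_{\xi' \in \textup{supp}\,\phi}\int_{\h^n\setminus B_r} |\eta^{-1}\circ\xi'|_{\h^n}^{-Q-sp} w_+^{p-1}(\eta)\, d\eta\biggr)d\xi,$$
which is precisely the second bracket of~\eqref{caccioppoli} (modulo the $\xi \leftrightarrow \eta$ relabeling used in the statement). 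Transposing this term to the right and combining with the local estimate yields~\eqref{caccioppoli}.

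The main obstacle is the validity of the algebraic inequality above in the singular regime $1 < p < 2$, where $|\cdot|^{p-2}$ blows up at the origin and one needs a Tartar-type refinement to absorb the bad interactions, exactly as in the Euclidean theory of~\cite{DKP14}. Apart from this standard but delicate point, the proof is a transparent adaptation of the Euclidean Caccioppoli-with-tail argument: no new geometric estimate on $\h^n$ is required beyond the symmetry of the homogeneous norm and the bi-invariance of Lebesgue measure under left translations.
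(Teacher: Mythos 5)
Your proposal runs essentially the same route as the paper: test the weak-subsolution inequality against $\psi = w_+\phi^p$, split the double integral into the local contribution on $B_r \times B_r$ and the nonlocal one over $(\h^n \setminus B_r)\times B_r$ (exploiting the symmetry $|\eta^{-1}\circ\xi|_{\h^n} = |\xi^{-1}\circ\eta|_{\h^n}$ and the bi-invariance of Lebesgue measure), estimate the local piece by a pointwise algebraic inequality, bound the nonlocal piece from below by $-w_+^{p-1}(\eta)w_+(\xi)$ times the kernel, and treat the $f$-term by its $L^\infty$ bound. The single compressed pointwise inequality you write is a packaging of what the paper does in three steps: first the reduction from $u$-differences to $w_+$-differences, namely
\[
|u(\xi)-u(\eta)|^{p-2}(u(\xi)-u(\eta))\bigl(w_+(\xi)\p^p(\xi)-w_+(\eta)\p^p(\eta)\bigr)
\geq \bigl(w_+(\xi)-w_+(\eta)\bigr)^{p-1}\bigl(w_+(\xi)\p^p(\xi)-w_+(\eta)\p^p(\eta)\bigr),
\]
then Lemma~\ref{lem_gamma} with the specific choice $\e = \frac{1}{\max\{1,2c_p\}}\frac{w_+(\xi)-w_+(\eta)}{w_+(\xi)}$, and finally the elementary $2^{p-1}$ splitting of $|w_+(\xi)\p(\xi)-w_+(\eta)\p(\eta)|^p$. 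That is a stylistic rather than a mathematical difference.

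One remark: the worry you raise about the singular regime $1<p<2$ is a non-issue, and neither the paper nor \cite{DKP14,DKP16} invoke any Tartar-type refinement for this Caccioppoli estimate. The expression $|t|^{p-2}t = |t|^{p-1}\operatorname{sgn}(t)$ does not blow up at $t=0$ for $p>1$; it merely fails to be differentiable. Moreover, after the reduction to $\bigl(w_+(\xi)-w_+(\eta)\bigr)^{p-1}$ only positive powers of nonnegative quantities appear, and Lemma~\ref{lem_gamma} holds for all $p\geq 1$ with a single constant $c_p$. The argument proceeds uniformly in $p\in(1,\infty)$ with no case distinction; Tartar-type monotonicity refinements are needed for other purposes (e.g.\ comparison principles or higher differentiability), not for this energy estimate.
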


	\begin{lemma}[{\bfseries Logarithmic Lemma}]
		\label{lem_log}
		Let $s \in (0,1)$, $p \in (1,\infty)$, and let $u \in W^{s,p}(\h^n)$ be a weak solution to~\textup{(\ref{problema})} such that $u \geq 0$ in $B_{R} \equiv B_R(\xi_0) \subset \Omega$. Then, there exists a constant $\tilde{c}\in [1,+\infty)$ such that the following estimate holds for any~$B_r \equiv B_r(\xi_0) \subset B_{\frac{R}{2\tilde{c}}}(\xi_0)$ and any $d>0$,
		\begin{eqnarray}\label{loga}
			&& \int_{B_r}\int_{B_r}|\eta^{-1}\circ \xi|_{\h^n}^{-Q-sp} \, \left|\log \left(\frac{u(\xi)+d}{u(\eta)+d}\right)\right|^p \,{\rm d}\xi \,{\rm d}\eta \nonumber\\* &&\qquad+\int_{B_r}\big(f(\xi,u)\big)_+\big(u(\xi)+d\big)^{1-p}\, \,{\rm d}\xi\notag\\*[0.5ex]     
			&&\qquad \leq \textbf{c}r^{Q-sp} + \textbf{c}d^{1-p}\frac{r^Q}{R^{sp}}\left\{\left[\textup{Tail}(u_-;\xi_0,R)\right]^{p-1}+1\right\}\\*
			&& \qquad\quad +\, \textbf{c}\|f\|_{L^\infty(B_{r})}\int_{B_{2r}}(u(\xi)+d)^{1-p} \, \,{\rm d}\xi.\notag
		\end{eqnarray}
		where $\textup{Tail}(u_-;\xi_0,R)$ is defined in~\eqref{def_tail}, $u_-:=\max\{-u,0\}$ and $\textbf{c}$ depends only on $n,p$ and $s$.
	\end{lemma}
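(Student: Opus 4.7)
The plan is to adapt the Euclidean strategy of~\cite{DKP14,DKP16}: test the weak formulation of $\mathcal{L}u = f$ with a negative power of $u+d$ cut off to $B_r$, split the resulting double integral into a local part and a long-range interaction part, and reduce the estimate to a pointwise algebraic inequality of logarithmic type together with a careful control of the tail.

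Concretely, I would choose $\tilde c \geq 1$ so that $B_{2r} \subset B_{R/(2\tilde c)}$ guarantees enough separation between the support of our test function and $\h^n \setminus B_R$, pick a smooth cutoff $\varphi \in C_0^\infty(B_{3r/2})$ with $\varphi \equiv 1$ on $B_r$, $0\leq \varphi \leq 1$ and $|\varphi(\xi)-\varphi(\eta)| \leq c\,|\eta^{-1}\circ\xi|_{\h^n}/r$, and plug
\begin{equation*}
\psi(\xi) := (u(\xi)+d)^{1-p}\,\varphi^p(\xi)
\end{equation*}
into the weak formulation. The hypothesis $u\geq 0$ on $B_R$ ensures that $\psi$ is a bounded, compactly supported legitimate test function. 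Using the left-invariance of the Haar measure to symmetrize the double integral in $\xi,\eta$, the equation reduces schematically to $I_{\mathrm{loc}} + 2\,I_{\mathrm{tail}} = \int_{B_{3r/2}} f(\xi,u)\,\psi\,d\xi$, where $I_{\mathrm{loc}}$ collects the contribution from $B_{3r/2}\times B_{3r/2}$ and $I_{\mathrm{tail}}$ that from $B_{3r/2}\times(\h^n\setminus B_{3r/2})$.

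For $I_{\mathrm{loc}}$ the key ingredient is the pointwise inequality of~\cite{DKP14}: for $a>b>0$ and $\tau_1,\tau_2\geq 0$,
\begin{equation*}
|a-b|^{p-2}(a-b)\big(\tau_1^p a^{1-p} - \tau_2^p b^{1-p}\big) \,\geq\, c_1\,\big|\log(a/b)\big|^p \min\{\tau_1,\tau_2\}^p - c_2\,|\tau_1-\tau_2|^p,
\end{equation*}
applied with $a=u(\xi)+d$, $b=u(\eta)+d$, $\tau_i=\varphi$. This produces precisely the $\log$ integral of~\eqref{loga} on the left, while the defect term integrates to a multiple of $r^{Q-sp}$ via the Lipschitz bound on $\varphi$ and the elementary computation $\int_{B_r}\int_{B_r} |\eta^{-1}\circ\xi|_{\h^n}^{-Q-sp+p}\,d\xi\,d\eta \leq c\,r^{Q+p-sp}$. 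For $I_{\mathrm{tail}}$, the non-negativity of $u$ on $B_R$ gives $(u(\xi)-u(\eta))_+ \leq u_-(\eta)$ when $\xi\in B_{3r/2}$ and $\eta\notin B_R$; combined with the distance comparison $|\eta^{-1}\circ\xi|_{\h^n} \geq c\,|\xi_0^{-1}\circ\eta|_{\h^n}$, valid because $R\geq 2\tilde c\,r$, this converts the long-range piece into a constant multiple of $d^{1-p}\,r^Q R^{-sp}\{[\textup{Tail}(u_-;\xi_0,R)]^{p-1}+1\}$. Finally, writing $f=f_+-f_-$ on the right-hand side, the $f_+$-piece is moved to the left to furnish the $(f(\xi,u))_+(u+d)^{1-p}$ term in~\eqref{loga}, while the $f_-$-piece is controlled by $\|f\|_{L^\infty(B_r)}\int_{B_{2r}}(u+d)^{1-p}\,d\xi$.

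The main obstacle is checking that the Fubini-type symmetrizations and the elementary annular integrals carry over intact despite the non-commutative group law: one has to exploit the bi-invariance of the Haar measure on $\h^n$ and the symmetry $|\eta^{-1}\circ\xi|_{\h^n} = |\xi^{-1}\circ\eta|_{\h^n}$ when interchanging the roles of $\xi$ and $\eta$, and verify by polar-coordinates on $\h^n$ that $\int_{B_{3r/2}}\int_{\h^n\setminus B_R}|\eta^{-1}\circ\xi|_{\h^n}^{-Q-sp}\,d\eta\,d\xi \leq c\,r^Q R^{-sp}$. A further subtle point is the admissibility of $\psi$ as a test function when $u$ is only in $W^{s,p}(\h^n)$; this should be handled by first truncating $u$ with $\min\{u,M\}$, deriving the estimate at that level, and sending $M\to\infty$ via monotone convergence.
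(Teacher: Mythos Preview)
Your approach is the paper's: same test function $\psi=(u+d)^{1-p}\varphi^p$ with $\varphi$ supported in $B_{3r/2}$, same local/tail splitting, same distance comparison $|\eta^{-1}\circ\xi|_{\h^n}\geq c\,|\xi_0^{-1}\circ\eta|_{\h^n}$ for $\xi\in\mathrm{supp}\,\varphi$, $\eta\notin B_R$, and the same $f_+/f_-$ decomposition. Two slips in the sketch need correcting, though.

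First, the pointwise inequality you quote has the wrong sign. For $a>b>0$ and $\tau_1=\tau_2=\tau$, the quantity $(a-b)^{p-1}\tau^p\big(a^{1-p}-b^{1-p}\big)$ is \emph{negative}, so it cannot dominate $c_1|\log(a/b)|^p\tau^p>0$. The correct statement (which the paper derives from Lemma~\ref{lem_gamma} via a case analysis on whether $b/a\leq\tfrac12$ or $b/a>\tfrac12$) is the \emph{upper} bound
\[
(a-b)^{p-1}\big(\tau_1^p a^{1-p}-\tau_2^p b^{1-p}\big)\ \le\ -c_1\,|\log(a/b)|^p\,\tau_2^p\ +\ c_2\,|\tau_1-\tau_2|^p.
\]
Rearranging this is what lands the log integral on the left-hand side of~\eqref{loga}.

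Second, the tail claim ``$(u(\xi)-u(\eta))_+\leq u_-(\eta)$ for $\xi\in B_{3r/2}$, $\eta\notin B_R$'' is false (take $u(\xi)=5$, $u(\eta)=-3$). The paper handles the tail by splitting $\h^n\setminus B_{2r}$ into two pieces: on the annulus $B_R\setminus B_{2r}$ one has $u(\eta)\geq 0$, hence $(u(\xi)-u(\eta))_+^{p-1}(u(\xi)+d)^{1-p}\leq 1$, and this region contributes an additional $\textbf{c}\,r^{Q-sp}$; on $\h^n\setminus B_R$ one uses $(u(\xi)-u(\eta))_+^{p-1}\leq 2^{p-1}\big(u(\xi)^{p-1}+u_-(\eta)^{p-1}\big)$ together with $(u(\xi)+d)^{1-p}\leq d^{1-p}$ and the distance comparison. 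Your sketch omits the intermediate annulus entirely and mis-states the estimate on the far region; both need to be inserted for the argument to close.
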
 
	\vspace{3mm}
	
	Starting from the results proven in the present paper, several questions naturally arise:
	\\*[0.3ex]
	\indent $\bullet$  Firstly, it is worth remarking that here we treat general weak solutions, namely by truncation and dealing with the resulting error term as a right hand-side, in the same flavour of the papers~\cite{DKP14,DKP16}, in the spirit of De~Giorgi-Nash-Moser. However, one could approach the same family of problems by focusing solely to bounded viscosity solutions in the spirit of Krylov-Safonov, as in the important paper~\cite{Sil06}. 
	\vspace{1mm}
	
	$\bullet$ Consequently, a second natural question is whether or not, and under which assumptions on the structural quantities, the viscosity solutions to nonlocal equations in the Heisenberg group are indeed fractional harmonic functions and/or weak solutions, and vice versa. In this respect, let us observe that one cannot plainly apply the results for  $p$-fractional minimizers as obtained in the recent paper~\cite{KKP17} together with those in~\cite{KKL19}, whose proofs seem to be feasible only for a restrict class of kernels which cannot include modulating coefficients or other variations.  
	\vspace{1mm}
	
	$\bullet$ Third, in the same spirit of the series of paper by Brasco, Lindgren, and Schikorra~\cite{BL17,BLS18}, one would expect higher differentiability and other additional regularity results for the bounded solutions to nonlocal equations in the Heisenberg group. 
	It could be useful to start from  the estimates  in the aforementioned papers obtained for the standard fractional $p$-Laplace equation.
	\vspace{1mm}
	
	$\bullet$ Also, again in clear accordance with the Euclidean counterpart, one would expect self-improving properties of the solutions to~\eqref{problema}. For this, one should extend the recent nonlocal Gehring-type theorems proven in~\cite{KMS15,Sch16}. 
	\vspace{1mm}
	
	$\bullet$ Also, one could expect H\"older continuity and other regularity results for the solutions to a strictly related class of problems; that is, by adding in~\eqref{problema} a second integral-differential operators, of differentiability exponent~$t>s$ and summability growth~$q>1$, controlled by the zero set of a modulating coefficient: the so-called nonlocal double phase problem, in the same spirit of the Euclidean case treated in~\cite{DFP19,ZTR21}, starting from the pioneering results in the local case, when $s=1$, by Colombo, Mingione and many others; see for instance~\cite{DM20,DM20b} and the references therein.
	\vspace{1mm} 
	
	$\bullet$ Also, mean value properties for solutions to general nonlinear fractional operators, and their stability in the linear case when~$p=2$, could lead to very tricky situations in fractional non-Euclidean framework; we refer to the very recent papers~\cite{BDV20,BS21} and the references therein.
	\vspace{1mm}
		
	$\bullet$ Moreover, to our knowledge, nothing is known about the regularity  for  solutions to  parabolic nonlocal integro-differential equations involving the nonlinear operators in~\eqref{operatore}.
	\vspace{1mm}
	
	$\bullet$ Finally, by starting from the estimates proven in the present paper, in~\cite{PP21} weak and strong 
	Harnack inequalities for the solutions to~\eqref{problema} are proven. As expected, a tail contribution naturally appears in those estimates in order to control the nonlocal contributions coming from far. We refer also to~\cite{Pic22} for regularity results (up to the boundary) for very general boundary data, and for the related obstacle problems.

	\vspace{3mm}
	
	{\it To summarize}.\,~The result in the present paper seems to be one of the first 
	 concerning regularity properties of nonlinear nonlocal equations in the Heisenberg group. We prove that one can extend to the Heisenberg setting the strategy successfully applied in the fractional Euclidean case~(\!\!~\cite{DKP14,DKP16,IMS16,KMS15}); from another point of view our results can be seen as the (nonlinear) nonlocal extension of the Heisenberg counterpart of the celebrated De Giorgi-Nash-Moser theory (\!\!~\cite{MM07,MZ21}). Moreover, since we derive all our results for a general class of nonlinear integro-differential operators, via our approach by taking into account all the nonlocal tail contributions in a precise way, we obtain alternative proofs that are new even in the by-now classical case of the pure fractional sublaplacian operator $(-\Delta_{\h^n})^s$.
	Finally, we prove a boundedness estimate allowing an interpolation between the local and nonlocal contributions, which seems to be new even in the linear case.
	We believe our estimates to be important in a forthcoming nonlinear nonlocal theory in the Heisenberg group.

	\vspace{3mm}
	{\it The paper is organized as follows}.\,~In Section~\ref{sec_preliminaries} below we set up notation and terminology, and we briefly recall our underlying geometrical structure, by also recalling the involved functional spaces, and providing a few remarks on the assumptions on the data. The whole Section~\ref{sec_caccioppoli} and Section~\ref{sec_log} are devoted to the proof of the Caccioppoli inequality with tail, and the Logarithmic Lemma, respectively.  
	In the last two sections we are finally able to prove the boundedness result in~Theorem~\ref{teo_bdd}, and the H\"older continuity of the weak solutions~$u$ to~\eqref{problema}.
	
	%
	%
	\vspace{2mm}
	\section{Preliminaries}\label{sec_preliminaries}
	It is convenient to fix some notation which will be used throughout the rest of the paper. Firstly, notice that we will follow the usual convention of denoting by $\textbf{c}$ a general positive constant which will not necessarily be the same at different occurrences and which can also change from line to line. For the sake of readability, dependencies of the constants will  be often omitted within the chains of estimates, therefore stated after the estimate.

	\subsection{The Heisenberg-Weyl group}
	We start by introducing some definitions and briefly setting up the notation concerning the Heisenberg group. For further details we refer to the book by Bonfiglioli, Lanconelli and Uguzzoni, \cite{BLU07}.
	
	\vspace{2mm}
	As customary, we identify the Heisenberg group~$\h^n$ with $\r^{2n+1}$. Points in $\h^n$ are denoted by
	$$
	\xi := (z,t) = (x_1,\dots,x_n,y_1,\dots,y_n,t).
	$$
	The related group multiplication is given by
	\begin{eqnarray*}
		\xi \circ \xi' &:= &\Big(x+x',\, y+y',\, t+t'+2\langle y,x'\rangle-2\langle x,y'\rangle \Big) \\*[0.3ex]
		& = & \left( x_1+x_1', ..., x_n+x_n',\, y_1+y_1', ..., y_n+y_n',\, t+t' +2 \sum_{i=1}^n\big(
		y_ix_i'-x_iy'_i\big) 
		\right).
	\end{eqnarray*} 
	
	One can check that $(\r^{2n+1}, \circ)$ is a Lie group with identity element the origin~$0$ and inverse $\xi^{-1}=-\xi$. Moreover, one can consider the following automorphism group ${\Phi}_\lambda$ on $\r^{2n+1}$,
	$$
	\begin{aligned}
		{\Phi}_\lambda :  \r^{2n+1} &\longrightarrow \r^{2n+1}\\
		\xi &\longmapsto {\Phi}_\lambda(\xi):=\big(\lambda x, \, \lambda y, \,\lambda^2 t\big),
	\end{aligned}
	$$
	so that the group $\h^n=\big(\r^{2n+1},\,\circ,\, {\Phi}_\lambda\big)$ is a homogeneous Lie group; that is, the so-called {\it Heisenberg-Weyl group in} \ $\r^{2n+1}$. 
	\vspace{2mm}
	
	The Jacobian  
	basis of the Heisenberg Lie algebra $\mathbf{h}^n$ of $\h^n$ is given by
	$$
	X_j := \partial_{x_j} +2y_j \partial_t, \quad
	X_{n+j}:= \partial_{y_j}-2x_j \partial_t, \quad 1 \leq j \leq n, \quad
	T= \partial_t.
	$$
	Since 
	$$
	[X_j,X_{n+j}]=-4\partial_t \quad \text{for every} \ 1 \leq j \leq n,
	$$
	it follows
	\begin{eqnarray*}
		&& \textup{rank}\Big(\textup{Lie}\{X_1,\dots,X_{2n},T\}(0,0)\Big) \\*
		&& \qquad \qquad  = \  \textup{span}\big\{\partial_{x_1},\dots,\partial_{x_n},\partial_{y_1},\dots,\partial_{y_n},-4\partial_t\big\} 
		\ = \ 2n+1,
	\end{eqnarray*}
	which is the Euclidean dimension of $\h^n$, whereas we denote by $Q$ its homogeneous dimension
	$$
	Q=2n+2.
	$$
	This shows that $\h^n$ is a Carnot group with the following stratification
	$$
	\mathbf{h}^n = \textup{span}\{X_1,\dots,X_{2n}\}\oplus \textup{span}\{T\}.
	$$
	\vspace{2mm}
	
	Moreover, let $\Omega \subset \h^n$ be a domain. For $u\in C^1(\Omega;\,\r)$ we define the subgradient $\nabla_{\h^n} u$ by
	$$
	\nabla_{\h^n} u (\xi):= \Big(X_1u(\xi),\dots, X_{2n}u(\xi)\Big),
	$$
	and
	$$
	|\nabla_{\h^n}u|^2 := \sum_{j=1}^{2n}|X_ju|^2.
	$$ 
	
	\begin{defn}
		\label{def_homnorm}
		A \textup{homogeneous norm} on $\h^n$ is a continuous function {\rm (}with respect to the Euclidean topology\,{\rm )} ${d_{\rm o}} : \h^n \rightarrow [0,+\infty)$ such that:
		\begin{enumerate}[\rm(i)]
			\item{
				${d_{\rm o}}({\Phi}_\lambda(\xi))=\lambda {d_{\rm o}}(\xi)$, for every $\lambda>0$ and every $\xi \in \h^n$;
			}\vspace{1mm}
			\item{
				${d_{\rm o}}(\xi)=0$ if and only if $\xi=0$.
			}
		\end{enumerate}
		Moreover, we say that the homogeneous norm~${d_{\rm o}}$ is {\rm symmetric} if 
		$$
		{d_{\rm o}}(\xi^{-1})={d_{\rm o}}(\xi), \qquad \forall\xi \in \h^n.
		$$
	 \end{defn}
	\begin{rem}
		Let ${d_{\rm o}}$ be a homogeneous norm on $\h^n$. Then the function~$\Psi$ defined on the set of all pairs of elements of~$\h^n$ by
		$$
		\Psi(\xi,\eta):={d_{\rm o}} (\eta^{-1}\circ \xi)
		$$
		is a pseudometric on $\h^n$.
	\end{rem}
	Consider now the standard homogeneous norm on $\h^n$,
	\begin{equation}
		\label{work norm}
		|\xi|_{\h^n}= \left(|z|^4 +t^2\right)^\frac{1}{4}, \qquad \forall \xi=(z,t) \in \h^n.
	\end{equation}
	For any fixed $\xi_0 \in \h^n$ and $R>0$, the ball~$B_R(\xi_0)$ with center $\xi_0$ and radius $R$ is given by
	$$
	B_R(\xi_0):=\Big\{\xi \in \h^n : |\xi_0^{-1}\circ \xi|_{\h^n} < R\Big\}.
	$$
	\vspace{2mm}
	
	We conclude this section with some properties of the homogeneous norm on $\h^n$ that will be useful in the rest of the paper.
	\begin{prop}[{\bfseries Equivalence of the homogeneous norm}]
		\label{prop1}
		Let ${d_{\rm o}}$ be a homogeneous norm on $\h^n$. Then there exists a constant $\textbf{c}>0$ such that
		$$
		\textbf{c}^{-1}|\xi|_{\h^n}\leq {d_{\rm o}}(\xi) \leq \textbf{c}|\xi|_{\h^n}, \qquad \forall \xi \in \h^n.
		$$
	\end{prop}
	In view of the preceding proposition, in most of the forthcoming proofs one can simply take into account the pure homogeneous norm defined in~\textup{(\ref{work norm})} with no modifications at all.
	 
	\vspace{1mm} 
	\begin{prop}[{\bfseries Pseudo-triangle inequalities}]
		\label{prop2}
		Let ${d_{\rm o}}$ be a homogeneous norm on $\h^n$. Then there exists a constant $\tilde{c}>0$ such that the following statements are satisfied:
		\begin{enumerate}
			\item[\rm (1)]${d_{\rm o}}(\xi \circ \eta)\leq \tilde{c}({d_{\rm o}}(\xi)+{d_{\rm o}}(\eta))$;
			\item[\rm (2)] ${d_{\rm o}}(\xi \circ \eta)\geq \frac{1}{\tilde{c}}{d_{\rm o}}(\xi)-{d_{\rm o}}(\eta^{-1})$;
			\item[\rm (3)] ${d_{\rm o}}(\xi \circ \eta) \geq \frac{1}{\tilde{c}}{d_{\rm o}}(\xi)-\tilde{c}{d_{\rm o}}(\eta)$.
		\end{enumerate}
	\end{prop}
	For a proof of the previous propositions we refer to~Proposition~5.1.4 and Proposition~5.1.7 in~\cite{BLU07}.  
		\begin{rem}\label{constant_c}
		In the case when the homogeneous norm~${d_{\rm o}}$ does reduce to the standard norm~$|\cdot|_{\h^n}$ defined	in~\eqref{work norm}, 
	 the constant~$\tilde{c}$ given by Proposition~\textup{\ref{prop2}} can be chosen equal to 1. For the proof, we refer 
 to{\rm~\cite{Cyg81}};	
	see also Example~{\rm 5.1} in{\rm~\cite{BFS17}}.
		However, in view of possible generalizations of the results obtained in the present paper, as, e.~\!g., in to a more general framework involving abstract Carnot groups~$\mathds{G}$ with some homogenous norm $|\cdot|_{\mathds{G}}$ satisfying only a pseudo-triangle inequality, we would prefer to keep the constant~$\tilde{c}$ throughout all the forthcoming proofs. 
		\end{rem}
	\vspace{2mm}
	
	The computation in the result below will be used several times in the proofs in the following.
	\begin{lemma}\label{lem1}
		Let $\gamma >0$ and let~$|\cdot|_{\h^n}$ be the homogeneous norm on $\h^n$ defined in{\rm~\eqref{work norm}}. Then, 
	\begin{equation*}
		\int_{\h^n \smallsetminus B_r(\xi_0)} \frac{{\rm d}\xi}{|\xi_0^{-1}\circ \xi|_{\h^n}^{Q+\gamma}} \leq c(n,\gamma) r^{-\gamma}.
	\end{equation*}
    \end{lemma}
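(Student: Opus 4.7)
The plan is to reduce the estimate to the standard case $\xi_0=0,\ r=1$ by exploiting the translation invariance and the homogeneity of $\h^n$, and then to evaluate the resulting tail integral by a dyadic decomposition.

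First, I would use the fact that the Lebesgue measure on $\r^{2n+1}$ is a bi-invariant Haar measure for the group $(\h^n,\circ)$. Hence the change of variable $\eta = \xi_0^{-1}\circ \xi$ is measure-preserving, and the ball $B_r(\xi_0)$ is mapped to $B_r\equiv B_r(0)$. This reduces the inequality to showing
\[
\int_{\h^n\smallsetminus B_r}\frac{{\rm d}\eta}{|\eta|_{\h^n}^{Q+\gamma}}\le c(n,\gamma)\,r^{-\gamma}.
\]

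Next, I would perform the anisotropic dilation $\eta = {\Phi}_r(\zeta)$, whose Jacobian equals $r^Q$ (since ${\Phi}_r$ multiplies the $2n$ horizontal variables by $r$ and the vertical variable $t$ by $r^2$, producing the factor $r^{2n+2}=r^Q$). Combined with the homogeneity property $|{\Phi}_r(\zeta)|_{\h^n}=r|\zeta|_{\h^n}$ from Definition~\ref{def_homnorm}(i), this yields
\[
\int_{\h^n\smallsetminus B_r}\frac{{\rm d}\eta}{|\eta|_{\h^n}^{Q+\gamma}}
= r^{-\gamma}\int_{\h^n\smallsetminus B_1}\frac{{\rm d}\zeta}{|\zeta|_{\h^n}^{Q+\gamma}},
\]
so it only remains to verify that the integral on the right is a finite constant depending on $n$ and $\gamma$ alone.

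Finally, I would bound that integral via the dyadic decomposition $\h^n\smallsetminus B_1 = \bigcup_{k\ge 0}\bigl(B_{2^{k+1}}\smallsetminus B_{2^k}\bigr)$. On each annulus one has $|\zeta|_{\h^n}^{-Q-\gamma}\le 2^{-k(Q+\gamma)}$, while the homogeneity of $|\cdot|_{\h^n}$ gives $|B_{2^{k+1}}|=2^{(k+1)Q}|B_1|$. Therefore
\[
\int_{\h^n\smallsetminus B_1}\frac{{\rm d}\zeta}{|\zeta|_{\h^n}^{Q+\gamma}}
\le |B_1|\sum_{k=0}^{\infty} 2^{-k(Q+\gamma)}\,2^{(k+1)Q}
= 2^Q |B_1|\sum_{k=0}^{\infty}2^{-k\gamma},
\]
and the geometric series converges since $\gamma>0$, producing a constant $c(n,\gamma)$. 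Combining the three steps yields the claim. There is no real obstacle here; the only points to be careful about are the Jacobian of the anisotropic dilation and the fact that the homogeneous norm $|\cdot|_{\h^n}$ in \eqref{work norm} genuinely satisfies $|{\Phi}_r(\zeta)|_{\h^n}=r|\zeta|_{\h^n}$, both of which follow directly from the definitions recalled above.
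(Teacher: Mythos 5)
Your proof is correct and takes essentially the same approach as the paper's: a dyadic decomposition into annuli combined with the homogeneity of the norm and a geometric series. The only organizational difference is that you first normalize by a translation and an anisotropic dilation to reduce to $\xi_0=0$, $r=1$, whereas the paper carries $\xi_0$ and $r$ through the dyadic sum directly; the core estimate is identical.
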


    \begin{proof}
	The proof is straightforward. For any $j \in \mathds{N}$ let us indicate with $B^j$ the following set
    \begin{equation*}
	B^j := \left\{ \xi \in \h^n \smallsetminus B_r(\xi_0): 2^j r \leq |\xi_0^{-1}\circ \xi|_{\h^n} \leq 2^{j+1}r\right\}.
    \end{equation*}
    Then, we have that
    \begin{align}
	\int_{\h^n \smallsetminus B_r(\xi_0)} \frac{{\rm d}\xi}{|\xi_0^{-1}\circ \xi|_{\h^n}^{Q+\gamma}}  & = \sum_{j=0}^{\infty} \,  \int_{B^j} \frac{{\rm d}\xi}{|\xi_0^{-1}\circ \xi|_{\h^n}^{Q+\gamma}}\notag\\
	& \leq \sum_{j=0}^\infty (2^jr)^{-Q-\gamma} |B_{2^{j+1}r}(\xi_0)|\notag\\
	& = c(n) r^{-\gamma}\sum_{j=0}^\infty \left(\frac{1}{2^\gamma}\right)^j \leq c(n,\gamma) r^{-\gamma}.
    \end{align}
    \end{proof}
	 
	\subsection{The setting of the main problem}\label{sec_main}
	
	We firstly need to recall some definitions and a few basic results about our fractional functional setting. For further details, we refer the reader to~\cite{AM18,KS18}.
	\vspace{1mm}
	
	Let $p \geq 1$ and $s \in (0,1)$, and let $u : \h^n \rightarrow \r$ be a measurable function; we define the Gagliardo (semi)norm of~$u$ as follows, 
	\begin{equation}
		[u]_{W^{s,p}} = \left(\int_{\h^n}\int_{\h^n}\frac{|u(\xi)-u(\eta)|^p}{|\eta^{-1}\circ \xi|_{\h^n}^{Q+sp}}\,{\rm d}\xi \,{\rm d}\eta\right)^{\frac{1}{p}}.
	\end{equation} 
	The fractional Sobolev spaces~$W^{s,p}$ on the Heisenberg group is defined as
	\begin{equation}
		W^{s,p}(\h^n):=\Big\{u \in L^p(\h^n):  [u]_{W^{s,p}} < +\infty\Big\},
	\end{equation}
	endowed with the natural fractional norm
	\begin{equation}
		\|u\|_{W^{s,p}(\h^n)}:= \Big(\|u\|_{L^p(\h^n)}^p+[u]_{W^{s,p}}^p\Big)^\frac{1}{p}, \qquad u \in W^{s,p}(\h^n).
	\end{equation}
	Similarly, given a domain $\Omega \subset \h^n$, one can define the  fractional Sobolev space $W^{s,p}(\Omega)$ in the natural way, as follows
	\begin{equation}
		W^{s,p}(\Omega):=\left\{u \in L^p(\Omega): \,\left(\int_{\Omega}\int_{\Omega}\frac{|u(\xi)-u(\eta)|^p}{|\eta^{-1}\circ \xi|_{\h^n}^{Q+sp}}\,{\rm d}\xi \,{\rm d}\eta\right)^{\frac{1}{p}}< +\infty\right\}
	\end{equation}
	endowed with the norm
	\begin{equation}
		\|u\|_{W^{s,p}(\Omega)}:= \left(\|u\|_{L^p(\Omega)}^p+\int_{\Omega}\int_{\Omega}\frac{|u(\xi)-u(\eta)|^p}{|\eta^{-1}\circ \xi|_{\h^n}^{Q+sp}}\,{\rm d}\xi \,{\rm d}\eta\right)^\frac{1}{p}\,.
	\end{equation}
	By $W^{s,p}_0(\Omega)$ we denote the closure of $C_0^\infty(\Omega)$ in $W^{s,p}(\h^n)$. Conversely, if $v \in W^{s,p}(\Omega')$ with  $\Omega \Subset \Omega'$ and $v=0$ outside of $\Omega$ almost everywhere, then $v$ has a representative in $W_0^{s,p}(\Omega)$ as well. 
	\vspace{2mm}
	
	As expected, one can prove a fractional Sobolev embedding on the Heisenberg group. We have the following
	\begin{theorem}
		\label{sobolev}
		Let $p>1$ and $s \in (0,1)$ such that $sp<Q$. For any measurable compactly supported function $u : \h^n \rightarrow \r$ there exists a positive constant $\textbf{c}=\textbf{c}(n,p,s)$ such that
		\begin{equation*}
			\|u\|^p_{L^{p^*}(\h^n)}\, \leq \, \textbf{c} \,[u]^p_{W^{s,p}(\h^n)}\,,
		\end{equation*}
		where $p^* = {Qp}/{(Q-sp)}$ is the critical Sobolev exponent.
	\end{theorem}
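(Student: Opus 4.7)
The plan is to reduce the embedding to the Hardy--Littlewood--Sobolev inequality on $\h^n$ viewed as a space of homogeneous type of dimension $Q$, via a Hedberg-type pointwise representation of $u$. Since $u$ has compact support and $\h^n$ carries a doubling Lebesgue measure, by Lebesgue differentiation and dyadic telescoping over balls $B_{2^k}(\xi)$ one has for a.e. $\xi \in \h^n$
\begin{equation*}
|u(\xi)| \,\leq\, c \sum_{k \in \mathds{Z}} \bigl| u_{B_{2^{k+1}}(\xi)} - u_{B_{2^k}(\xi)}\bigr|,
\end{equation*}
where the sum is effectively finite since the averages vanish once $2^k$ exceeds the diameter of $\textup{supp}\, u$.

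Next, using the pseudo-triangle inequality of Proposition~\ref{prop2} together with H\"older's inequality, I would establish the local oscillation bound
\begin{equation*}
\dashint_{B_r(\xi)} |u(\xi) - u(\eta)| \,{\rm d}\eta \,\leq\, c\, r^{s}\, \biggl(\int_{B_r(\xi)} \frac{|u(\xi) - u(\eta)|^p}{|\eta^{-1}\circ\xi|_{\h^n}^{Q+sp}}\,{\rm d}\eta\biggr)^{\!1/p}
\end{equation*}
for every $\xi \in \h^n$ and $r > 0$; this controls each dyadic jump $|u_{B_{2^{k+1}}(\xi)} - u_{B_{2^k}(\xi)}|$ by the local Gagliardo-type integrand at scale $2^{k+1}$. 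Summing over $k$ and rewriting the geometric sum as an integral produces a Riesz-potential-type pointwise bound
\begin{equation*}
|u(\xi)| \,\leq\, c\int_{\h^n} \frac{F(\eta)}{|\eta^{-1}\circ\xi|_{\h^n}^{Q-s}}\,{\rm d}\eta,
\end{equation*}
for an auxiliary function $F \geq 0$ built from the Gagliardo integrand and satisfying $\|F\|_{L^p(\h^n)}^p \leq c\,[u]_{W^{s,p}(\h^n)}^p$.

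Finally, the Hardy--Littlewood--Sobolev inequality for the Riesz potential of order $s$ on the homogeneous group $\h^n$, available with the sharp pair of exponents $1/p - 1/p^* = s/Q$ precisely in the regime $sp < Q$, yields
\begin{equation*}
\|u\|_{L^{p^*}(\h^n)} \,\leq\, c\,\|F\|_{L^p(\h^n)} \,\leq\, c\,[u]_{W^{s,p}(\h^n)}.
\end{equation*}
The main technical point will be the passage from the dyadic sum to the integral representation of $u$: the non-commutative group law requires one to exploit the symmetry $|\eta^{-1}\circ\xi|_{\h^n} = |\xi^{-1}\circ\eta|_{\h^n}$ and the left-invariance of Lebesgue measure, rather than a plain translation argument, in order to identify the sum with a genuine convolution against the kernel $|\cdot|_{\h^n}^{-(Q-s)}$. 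An alternative approach, avoiding HLS entirely, would be to prove first the weak-type estimate $|\{|u|>\lambda\}|\leq c\,\lambda^{-p^*}[u]_{W^{s,p}(\h^n)}^{p^*}$ via Chebyshev on level sets and then upgrade to the strong bound by Marcinkiewicz interpolation, mirroring the Euclidean argument of~\cite{DPV12}.
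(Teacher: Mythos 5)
The paper does not give a self-contained proof of Theorem~\ref{sobolev}: it simply cites Theorem~2.5 of~\cite{KS18}, which transplants the Euclidean argument of~\cite{PSV13,DPV12} to $\h^n$. That argument (essentially due to Savin) is a direct dyadic level-set estimate: one sets $A_k:=\{|u|>2^k\}$, bounds $|A_{k+1}|$ through the Gagliardo integrand restricted to nearby level strips, and sums a geometric series, needing no external ingredient beyond elementary measure theory. Your route --- a Hedberg-type pointwise representation followed by Hardy--Littlewood--Sobolev --- is genuinely different and is a valid alternative; what it buys in brevity it pays for by importing, as a black box, the $L^p\to L^{p^*}$ boundedness of the Riesz potential of order $s$ on the homogeneous group $\h^n$ (Folland--Stein), which the cited route avoids entirely.

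Two caveats on your sketch. First, the averages $u_{B_{2^k}(\xi)}$ do not \emph{vanish} once $2^k$ exceeds $\mathrm{diam}(\textup{supp}\,u)$; they merely decay like $(2^k)^{-Q}\|u\|_{L^1}$, so the correct statement is $\lim_{k\to\infty}u_{B_{2^k}(\xi)}=0$ and the telescoping sum is genuinely infinite (convergent because $I_s g(\xi)<\infty$ a.e.). Second, and more substantively: if you apply H\"older to the full double oscillation average you land on the $p$-maximal function $M_p g$, which is \emph{not} bounded $L^p\to L^{p^*}$, and HLS does not apply. To reach an honest Riesz potential you must apply H\"older only in the inner variable, keeping the outer average in $L^1$: your one-sided oscillation bound centred at each $\eta\in B_{2^{k+1}}(\xi)$, averaged in $\eta$, gives $\bigl|u_{B_{2^{k+1}}(\xi)}-u_{B_{2^{k}}(\xi)}\bigr|\le c\,(2^k)^s\dashint_{B_{2^{k+1}}(\xi)}g(\eta)\,{\rm d}\eta$ with $g(\eta):=\bigl(\int_{\h^n}|u(\eta)-u(\zeta)|^p\,|\zeta^{-1}\circ\eta|_{\h^n}^{-Q-sp}\,{\rm d}\zeta\bigr)^{1/p}$; resumming in $k$ via Fubini and $\sum_{k\ge j}(2^k)^{s-Q}\simeq(2^j)^{s-Q}$ then yields $|u(\xi)|\le c\,I_s g(\xi)$ with $\|g\|_{L^p(\h^n)}=[u]_{W^{s,p}}$, after which HLS closes the argument. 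Finally, the ``alternative'' via a weak-type bound and Marcinkiewicz interpolation is viable, but it is not what~\cite{DPV12} does: there the strong inequality is obtained directly from the dyadic level-set estimate, without passing through a weak-type bound or interpolation.
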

	For the proof we refer to Theorem~2.5 in~\cite{KS18}, where the authors extend the strategy in the standard Euclidean settings as seen in~\cite{PSV13,DPV12}.
	
	\vspace{2mm}

	As in the classical case with $s$ being an integer, the space $W^{s,p}$ is continuously embedded in~$W^{s_1,p}$ when $s_1\leq s$, as the result below points out.
	\begin{prop}\label{sobolev immersion}
		Let $p >1$ and $0< s_1 \leq s < 1$. Let $\Omega$ be an open subset of~$\h^n$, and let $u\in W^{s,p}(\Omega)$. Then
		\begin{equation*}
			\|u\|_{W^{s_1,p}(\Omega)} \leq c \|u\|_{W^{s,p}(\Omega)},
		\end{equation*}
		for some suitable positive constant $c$ depending only on~$n, p$ and $s_1$. 	
	\end{prop}
	\begin{proof}
		We extend the strategy in the proof in the fractional Euclidean framework; see~\cite[Proposition~2.1]{DPV12}.
		
		Firstly, we can control the size of the nonlocal tail of~$u$ by its $L^p$-norm. We have
		\begin{eqnarray*}
			\int_{\Omega}\int_{\Omega \cap \{|\eta^{-1} \circ \, \xi|_{\h^n} \geq 1\}}\frac{|u(\xi)|^p}{|\eta^{-1} \circ \, \xi|_{\h^n}^{Q+s_1p}} \, {\rm d}\xi \,{\rm d}\eta & \leq& \int_{\Omega}\left(\int_{\h^n \smallsetminus B_1(0)}\frac{1}{|\tilde{\xi}|_{\h^n}^{Q+s_1p}}d\tilde{\xi}\right)|u(\xi)|^p \,{\rm d}\xi\\*[0.5ex]
			& \leq & c(n, p, s_1)\|u\|^p_{L^p(\Omega)},
		\end{eqnarray*}
		and thus
		\begin{eqnarray}\label{sec2 d1}
			&&\int_{\Omega}\int_{\Omega \cap \{|\eta^{-1} \circ \, \xi|_{\h^n} \geq 1\}}\frac{|u(\xi)-u(\eta)|^p}{|\eta^{-1} \circ \, \xi|_{\h^n}^{Q+s_1p}} \, \,{\rm d}\xi \,{\rm d}\eta \notag\\*[0.5ex]
			&& \qquad \qquad \qquad \qquad \leq 2^{p-1}\int_{\Omega}\int_{\Omega \cap \{|\eta^{-1} \circ \, \xi|_{\h^n} \geq 1\}}\frac{|u(\xi)|^p+|u(\eta)|^p}{|\eta^{-1} \circ \, \xi|_{\h^n}^{Q+s_1p}} \, \,{\rm d}\xi \,{\rm d}\eta\notag\\*[0.5ex]
			&& \qquad\qquad \qquad \qquad \leq c \|u\|^p_{L^p(\Omega)},
		\end{eqnarray}
		up to relabelling the constant~$c$.
		
		On the other hand,
		\begin{eqnarray}\label{sec2 d2}
			&&	\int_{\Omega}\int_{\Omega \cap \{|\eta^{-1} \circ \, \xi|_{\h^n} < 1\}}\frac{|u(\xi)-u(\eta)|^p}{|\eta^{-1} \circ \, \xi|_{\h^n}^{Q+s_1p}} \, \,{\rm d}\xi \,{\rm d}\eta \notag \\*
			&&\qquad\qquad	\qquad \leq \, \int_{\Omega}\int_{\Omega \cap \{|\eta^{-1} \circ \, \xi|_{\h^n} < 1\}}\frac{|u(\xi)-u(\eta)|^p}{|\eta^{-1} \circ \, \xi|_{\h^n}^{Q+sp}} \, \,{\rm d}\xi \,{\rm d}\eta.
		\end{eqnarray}
		Combining \eqref{sec2 d1} with  \eqref{sec2 d2}, we finally get
		\begin{equation*}
			\int_{\Omega}\int_{\Omega}\frac{|u(\xi)-u(\eta)|^p}{|\eta^{-1} \circ \, \xi|_{\h^n}^{Q+s_1p}} \, \,{\rm d}\xi \,{\rm d}\eta \, \leq \, c\|u\|_{L^p(\Omega)}^p + [u]^p_{W^{s,p}},
		\end{equation*}
		which yields
		$$
		\|u\|^p_{W^{s_1,p}(\Omega)}\, \leq\, (c+1)\|u\|^p_{L^p(\Omega)}+ [u]^p_{W^{s,p}} \,\leq \, c\|u\|^p_{W^{s,p}(\Omega)},
		$$
		again up to relabelling the constant~$c$.
	\end{proof}
	\vspace{2mm}
	
	We conclude this section by providing the definition of weak solution to the class of fractional problem we deal with.
	
	Let $\Omega$ be a bounded open set in $\h^n$ and $g \in W^{s,p}(\h^n)$, we are interested in the weak solutions to the following integro-differential problems,
	\begin{equation}\label{problema2}
		\begin{cases}
			\l u   = f & \text{in} \ \Omega,\\[0.5ex]
			u   =g & \text{in} \ \h^n \smallsetminus \Omega,
		\end{cases}
	\end{equation}
	where   
	the datum $f \equiv f(\cdot, u) \in L^\infty_{\textrm{loc}}(\h^n)$ locally uniformly in~$\Omega$, and
	the leading operator~$\l$ is an integro-differential operator of differentiability exponent $s\in(0,1)$ and summability exponent $p>1$ given by
	\begin{equation*} 
		\l u (\xi) = P.~\!V. \int_{\h^n}\frac{|u(\xi)-u(\eta)|^{p-2}\big(u(\xi)-u(\eta)\big)}{\dd(\eta^{-1}\circ \xi)^{Q+sp}}\,{\rm d}\eta, \qquad \xi \in \h^n,
	\end{equation*}
	with $\dd$ being a homogeneous norm on $\h^n$ in accordance with Definition~\ref{def_homnorm}.
	
	We now need to introduce some further notation. For any~$g \in W^{s,p}(\h^n)$ the classes~$\mathcal{K}^{\pm}_g (\Omega)$ of suitable fractional functions are defined by
	$$
	\mathcal{K}^{\pm}_g (\Omega):=\Big\{  v \in W^{s,p}(\h^n): (g-v)_\pm \in W^{s,p}_0(\Omega)\Big\},
	$$
	and 
	$$
	\mathcal{K}_g(\Omega) := \mathcal{K}^+_g(\Omega) \cap \mathcal{K}^-_g(\Omega) = \Big\{  v \in W^{s,p}(\h^n): v-g \in W^{s,p}_0(\Omega)\Big\}.
	$$
	We have the following
	\begin{defn}\label{solution to inhomo pbm}
		A function $u \in \mathcal{K}^-_g(\Omega)$  {\rm (}$\mathcal{K}^+_g(\Omega)$, respectively{\rm )} is a \textup{weak subsolution} {\rm (}\textup{supersolution}, resp.{\rm)} to~\textup{(\ref{problema2})} if  
		\begin{eqnarray*}
			&& \int_{\h^n}\int_{\h^n}\frac{\big|u(\xi)-u(\eta)\big|^{p-2}\big(u(\xi)-u(\eta)\big)\big(\psi(\xi)-\psi(\eta)\big)}{\dd(\eta^{-1}\circ \xi)^{Q+sp}}  \,{\rm d}\xi \,{\rm d}\eta \\*[0.5ex]
			&& \hspace{6cm} \leq \big(\geq,\textrm{resp.}\big) \int_{\h^n}f(\xi,u(\xi))\psi(\xi) \, \,{\rm d}\xi,
		\end{eqnarray*}
		for any nonnegative  $ \psi \in W_0^{s,p}(\Omega)$.
		\\	A function u is a \textup{weak solution} to~\textup{(\ref{problema2})} if it is both a weak sub- and supersolution. In particular, $u$ belongs to $\mathcal{K}_g(\Omega)$ and it satisfies 
		\begin{equation*}
			\int_{\h^n}\int_{\h^n}\frac{|u(\xi)-u(\eta)|^{p-2}(u(\xi)-u(\eta))(	\psi(\xi)-\psi(\eta))}{\dd(\eta^{-1}\circ \xi)^{Q+sp}} \, {\rm d}\xi \,{\rm d}\eta = \int_{\h^n}f(\xi,u(\xi))\psi(\xi)  \,{\rm d}\xi,  
		\end{equation*}
		for any $\psi \in W^{s,p}_0(\Omega)$.
	\end{defn}
	
	For any $u \in W^{s,p}(\h^n)$ and for any $B_R(\xi_0) \subset \h^n$ we will define the \textit{nonlocal tail of a function u in the ball} $B_R(\xi_0)$ the quantity
	\begin{equation}
		\label{tail}
		\textup{Tail}(u;\xi_0,R):= \left(R^{sp} \int_{\h^n \smallsetminus B_R(\xi_0)}|u(\xi)|^{p-1}|\xi_0^{-1} \circ \xi|_{\h^n}^{-Q -sp}\,{\rm d}\xi\right)^{\frac{1}{p-1}}.
	\end{equation}
	
	A few observations are in order.

	Firstly, we notice that, by H\"older's Inequality, since $u\in L^p(\h^n)$ and $R>0$, we have that $\textup{Tail}(u;\xi_0,R) < +\infty$.
	\vspace{2mm}
	
	Second, we have the following
	\begin{rem}\label{rem_tailspace}
		The requirement on the boundary datum~$g$ to be in the whole~$W^{s,p}(\h^n)$ can be weakened by assuming only a local fractional differentiability, namely $g \in W_{\textrm{\rm loc}}^{s,p}(\Omega)$, in addition to the boundedness of its nonlocal tail; i.~\!e., $\text{\rm Tail}(g; \xi_0, R)~<~\infty$, for some $\xi_0\in\h^n$ and some $R>0$.  This is not restrictive, and it does not bring relevant modifications in the rest of the paper. For further details on the related~``Tail space'', we refer the interested reader to papers{\rm ~\cite{KKP16,KKP17}}.
	\end{rem}
	
	\vspace{2mm}
	
	Finally, an important observation about the assumptions on the datum~$f$ in the right-hand side of~\eqref{problema2}.
	\begin{rem}\label{rem_datum}
		The presence of the datum~$f$ is a novelty with respect to the Euclidean counterpart studied in\,{\rm~\cite{DKP16}} where the authors assume the right-hand side in~\eqref{problema2} to be zero. However, as we basically will prove, the techniques there can be applied also to more general framework.
		
		In addition, in accordance with the classical elliptic theory, 
		with no important modifications in the forthcoming proofs, one could consider the case when the local boundedness assumption on the datum~$f$ is replaced by a uniformly growth control from above, as, e.~\!g., 
		$$
		|f(\xi,u)| \leq a + b|u|^{q} \quad \text{for almost everywhere} \ \xi \in \Omega \ \text{and any} \ u \in \R,
		$$
		for some suitable choice of the exponent~$q=q(n,p,s)$.
	\end{rem}

	\vspace{2mm}
	Before going into the proofs, it is worth pointing that in the rest of the paper we will only consider the case when the structural parameters $n$, $s$ and $p$ are such that~$sp\leq Q$. This is not a restriction, since, in the remaining case when $sp>Q$, the desired boundedness and H\"older continuity results are assured by the fractional Morrey embedding in the Heisenberg group; for the proof we refer for instance to~Theorem~1.5 in~\cite{AM18}.

	%
	%
	
	\vspace{2mm}
	\section{Proof of the Caccioppoli inequality with tail}\label{sec_caccioppoli}
	The aim of this section is to give a full proof of Theorem~\ref{teo_caccioppoli}. We would stress that, as in the classical Euclidean case,  in both the entire framework and  the fractional one, the {\it Caccioppoli estimates with tail}~\eqref{caccioppoli} encode all the needed information to derive the desired H\"older continuity from the minimum properties of the solutions, and it is an independent result which could be very useful in order to detect further regularity properties of the solutions to general fractional problems.
	
	\begin{proof}[Proof of Theorem~\ref{teo_caccioppoli}]\,
		Let $u$ be a weak subsolution. We firstly choose as a test function in~\ref{solution to inhomo pbm} the function
		$$
		\psi:= w_+\p^p \equiv (u-k)_+\p^p, \qquad \text{for} \ k \in \R,
		$$
		where $\p$ is any nonnegative function in~$C^\infty_0(B_r)$.
		We get
		\begin{eqnarray}\label{cacc e1}
			0 & \geq& \int_{B_r}\int_{B_r}|\eta^{-1} \circ \xi|_{\h^n}^{-Q-sp}|u(\xi)-u(\eta)|^{p-2}\notag\\*
			&& \qquad\quad \ \times \, \big(u(\xi)-u(\eta)\big)\big(w_+(\xi)\p^p(\xi)-w_+(\eta)\p^p(\eta)\big) \,{\rm d}\xi {\rm d} \eta \notag\\*
			&&   +2\int_{\h^n \smallsetminus B_r}\int_{B_r}|\eta^{-1} \circ \xi|_{\h^n}^{-Q-sp}|u(\xi)+u(\eta)|^{p-2}\notag\\*
			&&\qquad\quad \times \, \big(u(\xi)-u(\eta)\big)w_+(\xi)\p^p(\xi)  \,{\rm d}\xi {\rm d}\eta\nonumber\\*
           &&- \int_{B_r}f(\xi,u(\xi))w_+(\xi)\p^p(\xi)\, {\rm d}\xi.
		\end{eqnarray}
		Note that $\psi$ is an admissible test function since truncations of functions in~$W^{s,p}(\h^n)$ still belongs to~$W^{s,p}(\h^n)$.
		
		Let us begin by estimating the first integral on the right-hand side in~\eqref{cacc e1}. Without loss of generality, we assume that $u(\xi) \geq u(\eta)$; otherwise it just suffices to interchange the roles of $\xi$ and $\eta$ below. 
		We have
		\begin{eqnarray*}
			&& \big|u(\xi)-u(\eta)\big|^{p-2}\big(u(\xi)-u(\eta)\big)\big(w_+(\xi)\p^p(\xi)-w_+(\eta)\p^p(\eta)\big)\\*[0.5ex]
			&& \qquad  = \big(u(\xi)-u(\eta)\big)^{p-1}((u(\xi)-k)_+\p^p(\xi)-\big(u(\eta)-k)_+\p^p(\eta)\big)\\*[0.5ex]
			&&  \qquad= 
			\begin{cases}
				\big(w_+(\xi)-w_+(\eta)\big)^{p-1}(w_+(\xi)\p^p(\xi)-w_+(\eta)\p^p(\eta)) \qquad \text{for} \ u(\xi),\, u(\eta) >k,\\*
				\big(u(\xi)-u(\eta)\big)^{p-1}w_+(\xi)\p^p(\xi) \hspace{4cm}  \text{for} \ u(\xi)>k, \ u(\eta) \leq k,\\*
				0 \hspace{8.5cm} \text{otherwise}
			\end{cases}\\*[0.5ex]
			&& \qquad\geq\ \big(w_+(\xi)-w_+(\eta)\big)^{p-1}\big(w_+(\xi)\p^p(\xi)-w_+(\eta)\p^p(\eta)\big).
		\end{eqnarray*}
		For the second term on the right-hand side in \eqref{cacc e1} we have
		\begin{align*}
			\big|u(\xi)-u(\eta)\big|^{p-2}\big(u(\xi)-u(\eta)\big)w_+(\xi) & \geq -\big(u(\eta)-u(\xi)\big)_+^{p-1}\big(u(\xi)-k\big)_+\\*[1ex]
			& \geq -\big(u(\eta)-k\big)_+^{p-1}\big(u(\xi)-k\big)_+\\*[1ex]
			& = -w_+^{p-1}(\eta)w_+(\xi)\,,
		\end{align*}
		which yields
		\begin{align}\label{cacc e2}
			\int_{\h^n \smallsetminus B_r}&\int_{B_r}|\eta^{-1} \circ \xi|_{\h^n}^{-Q-sp}|u(\xi)-u(\eta)|^{p-2}\big(u(\xi)-u(\eta)\big)w_+(\xi)\p^p(\xi) \,{\rm d}\xi \,{\rm d}\eta\notag\\*[0.5ex]
			& \geq - \int_{\h^n \smallsetminus B_r}\int_{B_r} |\eta^{-1} \circ \xi|_{\h^n}^{-Q-sp}w_+^{p-1}(\eta)w_+(\xi)\p^p(\xi)  \,{\rm d}\xi \,{\rm d}\eta\notag\\*[0.5ex]
			& \geq -\int_{B_r}w_+(\xi)\p^p(\xi) \,{\rm d}\xi \left(\sup_{\xi \in \textup{supp }\p}\int_{\h^n \smallsetminus B_r}|\eta^{-1} \circ \xi|_{\h^n}^{-Q-sp}w_+^{p-1}(\eta)  \,{\rm d}\eta\right).
		\end{align}
		From \eqref{cacc e1}-\eqref{cacc e2}, we deduce
		\begin{eqnarray}\label{cacc e3}
			0 &\geq& \int_{B_r}\int_{B_r}|\eta^{-1} \circ \xi|_{\h^n}^{-Q-sp}|w_+(\xi)-w_+(\eta)|^{p-1} \big(w_+(\xi)\p^p(\xi)-w_+(\eta)\p^p(\eta)\big) \, \,{\rm d}\xi {\rm d} \eta \notag\\*
			&&-2\int_{B_r}w_+(\xi)\p^p(\xi) \, \,{\rm d}\xi \left(\sup_{\xi \in \textup{supp }\p}\int_{\h^n \smallsetminus B_r}|\eta^{-1} \circ \xi|_{\h^n}^{-Q-sp}w_+^{p-1}(\eta) \, \,{\rm d}\eta\right)\\*
			&& - \int_{B_r}f(\xi,u)w_+(\xi)\p^p(\xi)\, \,{\rm d}\xi.\notag
		\end{eqnarray}
		Let us consider the first term in \eqref{cacc e3}. In the case when~$w_+(\xi) \geq w_+(\eta)$ and~$\p(\xi) \leq \p(\eta)$, we can use forthcoming Lemma~\ref{lem_gamma} to obtain
		\begin{equation}
			\p^p(\xi)\, \geq\, (1-c_p \e)\p^p(\eta)-(1+c_p\e)\e^{1-p}|\p(\xi)-\p(\eta)|^p, \qquad \e \in (0,1].
		\end{equation}
		Choosing
		$$
		\e := \frac{1}{\max\{1,2c_p\}}\frac{w_+(\xi)-w_+(\eta)}{w_+(\xi)} \in (0,1]
		$$
		we have that
		\begin{eqnarray*}
			\big(w_+(\xi)-w_+(\eta)\big)^{p-1}w_+(\xi)\p^p(\xi) & \geq & \big(w_+(\xi)-w_+(\eta)\big)^{p-1}w_+(\xi)\big(\max\big\{\p(\xi),\,\p(\eta)\big\}\big)^p\\*
			& &-\frac{1}{2}\big(w_+(\xi)-w_+(\eta)\big)^p\big(\max\big\{\p(\xi),\,\p(\eta)\big\}\big)^p\\*
			& &-c\big(\max\big\{w_+(\xi),\,w_+(\eta)\big\}\big)^p\big|\p(\xi)-\p(\eta)\big|^p,
		\end{eqnarray*}
		where $c$ depends only on $p$. We now recall that it has been assumed that~$\p(\xi) \leq \p(\eta)$. On the other hand, if~$w_+(\xi)=w_+(\eta)=0$, or if~$w_+(\xi)\geq w_+(\eta)$ and~$\p(\xi) \geq \p(\eta)$, then the estimate above trivially follows. Hence, we have
		\begin{eqnarray*}
			&& \big(w_+(\xi)-w_+(\eta)\big)^{p-1}\big(w_+(\xi)\p^p(\xi)-w_+(\eta)\p^p(\eta)\big)\\*[0.5ex]
			&& \qquad\qquad \geq \, \big(w_+(\xi)-w_+(\eta)\big)^{p-1}\big(w_+(\xi)(\max\{\p(\xi),\p(\eta)\}\big)^p-w_+(\eta)\p^p(\eta))\\*[0.5ex]
			&& \qquad\qquad \quad \,-\frac{1}{2}\big(w_+(\xi)-w_+(\eta)\big)^p\big(\max\big\{\p(\xi),\,\p(\eta)\big\}\big)^p\\*
			&& \qquad\qquad \quad \,-c\big(\max\big\{w_+(\xi),\,w_+(\eta)\big\}\big)^p\big|\p(\xi)-\p(\eta)\big|^p\\*[0.5ex]
			&& \qquad\qquad \geq \,  \frac{1}{2}\big(w_+(\xi)-w_+(\eta)\big)^p\big(\max\big\{\p(\xi),\,\p(\eta)\big\}\big)^p\\*
			&& \qquad\qquad \quad\, -c\big(\max\big\{w_+(\xi),\,w_+(\eta)\big\}\big)^p\big|\p(\xi)-\p(\eta)\big|^p
		\end{eqnarray*}
		whenever $w_+(\xi) \geq w_+(\eta)$. In the case when the opposite inequality holds, again it just suffices to interchange the roles of $\xi$ and $\eta$. Therefore, we have
		\begin{eqnarray}\label{cacc e4}
			&&\int_{B_r}\int_{B_r}|\eta^{-1} \circ \xi|_{\h^n}^{-Q-sp}|w_+(\xi)-w_+(\eta)|^{p-1}\big(w_+(\xi)\p^p(\xi)-w_+(\eta)\p^p(\eta)\big)  \,{\rm d}\xi {\rm d}\eta\notag\\*[0.5ex]
			&&  \geq\, \frac{1}{2}\int_{B_r}\int_{B_r} \big(w_+(\xi)-w_+(\eta)\big)^p\big(\max\big\{\p(\xi),\,\p(\eta)\big\}\big)^p \frac{ \,{\rm d}\xi \,{\rm d}\eta}{|\eta^{-1} \circ \xi|_{\h^n}^{Q+sp}}\\
			&& \quad  \, -\ c\int_{B_r}\int_{B_r}\big(\max\big\{w_+(\xi),\,w_+(\eta)\big\}\big)^p|\p(\xi)-\p(\eta)|^p  \frac{ \,{\rm d}\xi \,{\rm d}\eta}{|\eta^{-1} \circ \xi|_{\h^n}^{Q+sp}}. \notag
		\end{eqnarray}
		Now, we note that
		\begin{eqnarray*}
			\big|w_+(\xi)\p(\xi)-w_+(\eta)\p(\eta)\big|^p &\leq& 2^{p-1}\big|w_+(\xi)-w_+(\eta)\big|^p\big(\max\big\{\p(\xi),\,\p(\eta)\big\}\big)^p\\*
			&&+\,2^{p-1}\big(\max\big\{w_+(\xi),\,w_+(\eta)\big\}\big)^p|\p(\xi)-\p(\eta)|^p.
		\end{eqnarray*}
		Hence, combining the preceding inequality with~\eqref{cacc e3} and \eqref{cacc e4}, it follows
		\begin{eqnarray}\label{cacc e5}
			0 &\geq& \int_{B_r}\int_{B_r}|\eta^{-1} \circ \xi|_{\h^n}^{-Q-sp}\big|w_+(\xi)\p(\xi)-w_+(\eta)\p(\eta)\big|^p  \,{\rm d}\xi \,{\rm d}\eta\\*
			&&\qquad-c\int_{B_r}\int_{B_r}|\eta^{-1} \circ \xi|_{\h^n}^{-Q-sp}\big(\max\big\{w_+(\xi),\,w_+(\eta)\big\}\big)^p\big|\p(\xi)-\p(\eta)\big|^p  \,{\rm d}\xi \,{\rm d}\eta\notag\\*
			&&\qquad-2\int_{B_r}w_+(\xi)\p^p(\xi) \,{\rm d}\xi \left(\sup_{\xi \in \textup{supp }\p}\int_{\h^n \smallsetminus B_r}|\eta^{-1} \circ \xi|_{\h^n}^{-Q-sp}w_+^{p-1}(\eta)  \,{\rm d}\eta\right)\notag\\*
			&&\qquad - \int_{B_r}f(\xi,u)w_+(\xi)\p^p(\xi)\,{\rm d}\xi\,.\notag
		\end{eqnarray}
		
		Moreover, in the second integral in the right-hand side in the display above, we can suppose that $w_+(\xi) \geq w_+(\eta)$ up to interchanging $\xi$ with $\eta$; recall that~$|\eta^{-1} \circ \xi|_{\h^n}=|\xi^{-1}\circ \eta|_{\h^n}$ is symmetric. The last integral in~\eqref{cacc e5} can be finally estimated thanks to the assumption on~$f$. We have
		\begin{equation}\label{eq_caccio6}
			\int_{B_r}f(\xi,u(\xi))w_+(\xi)\p^p(\xi)\, \,{\rm d}\xi 
			\, \leq\, \|f\|_{L^\infty(B_r)}\int_{B_r}w_+(\xi)\p^p(\xi) \, \,{\rm d}\xi.
		\end{equation}
		The desired estimate in~\eqref{caccioppoli} is thus a plain consequence of the estimates in~\eqref{cacc e5} and~\eqref{eq_caccio6}.
	\end{proof}
	
	In the proof above, we made use of the following small inequality, which will be useful in the next section, as well.
	\begin{lemma}
		\label{lem_gamma}
		Let $p \geq 1 $ and $\e \in (0,1]$. Then
		$$
		|a|^p \leq |b|^p +c_p\e|b|^p + (1 + c_p\e)\e^{1-p}|a-b|^p, \qquad c_p :=(p-1){\Gamma}\!\left(\max\{1,\,p-2\}\right),
		$$
		holds for every $a,b \in \r^m$, $m\geq 1$. Here ${\Gamma}$\! stands for the standard Gamma function.
	\end{lemma}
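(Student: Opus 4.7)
The plan is to treat the cases $p=1$ and $p>1$ separately. When $p=1$ we have $c_1=0\cdot\Gamma(1)=0$ and the claimed inequality reduces to the triangle inequality $|a|\leq |b|+|a-b|$ in $\r^m$, so from here on I would restrict to $p>1$.

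For $p>1$, the natural starting point is the bound $|a|^p\leq (|b|+|a-b|)^p$, which follows from the triangle inequality in $\r^m$ and the monotonicity of $t\mapsto t^p$ on $[0,\infty)$. By homogeneity (the case $|b|=0$ being immediate), the problem reduces to the one-variable inequality
\begin{equation*}
(1+t)^p \,\leq\, 1+c_p\e + (1+c_p\e)\,\e^{1-p}\,t^p, \qquad t\geq 0.
\end{equation*}
To establish this I would combine the integral representation $(1+t)^p-1=p\int_0^t(1+s)^{p-1}\,{\rm d}s$ with the sub-additive estimate $(1+s)^{p-1}\leq \Gamma(\max\{1,p-2\})\bigl(1+s^{p-1}\bigr)$, which is trivial in the range $1<p\leq 2$ (since $p-1\in (0,1]$) and for $p>2$ follows from the convexity of $t\mapsto t^{p-1}$ together with standard $\Gamma$-function bounds on the relevant binomial-type constants. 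After integration the right-hand side is controlled by an expression of the form $1+c_p\,t+C(p)\,t^p$, with $c_p=(p-1)\Gamma(\max\{1,p-2\})$ and $C(p)$ depending only on $p$.

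Finally, I would split the linear-in-$t$ contribution through the elementary Young-type inequality $c_p\,t\leq c_p\e + c_p\e^{1-p}\,t^p$, valid for every $t\geq 0$ and every $\e\in(0,1]$ (obtained at once by distinguishing the cases $t\leq \e$ and $t>\e$), and then combine the two $t^p$-terms: since $\e\leq 1$ yields $\e^{1-p}\geq 1$, the residual constant $C(p)$ can be absorbed into the prefactor of $t^p$ so that the combined coefficient becomes $(1+c_p\e)\e^{1-p}$, exactly as claimed. The principal obstacle is combinatorial: reproducing the explicit form $c_p=(p-1)\Gamma(\max\{1,p-2\})$ requires treating the ranges $1<p\leq 2$ and $p>2$ separately in the sub-additivity step and then reconciling the two cases through $\Gamma$-function identities; once this bookkeeping is carried out, the chain of estimates above closes and the inequality follows.
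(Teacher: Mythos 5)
The paper gives no proof of Lemma~\ref{lem_gamma}; it only remarks that the lemma ``follows via convexity and a standard iteration process'' and cites Lemma~3.1 of~\cite{DKP16}. The standard argument is the convexity split: writing $a = \lambda\,\frac{a-b}{\lambda} + (1-\lambda)\,\frac{b}{1-\lambda}$ for $\lambda\in(0,1)$ and using the convexity of $t\mapsto |t|^p$ gives $|a|^p \le \lambda^{1-p}|a-b|^p + (1-\lambda)^{1-p}|b|^p$; the choice $\lambda = \e/(1+\e)$ produces exactly the coefficients $(1+\e)^{p-1}\e^{1-p}$ and $(1+\e)^{p-1}$, so the lemma reduces to the single scalar bound $(1+\e)^{p-1}\le 1+c_p\e$ on $(0,1]$. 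Your integral-representation route is genuinely different, and as written it does not close.

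The central flaw is the asserted estimate $(1+s)^{p-1}\le \Gamma(\max\{1,p-2\})\bigl(1+s^{p-1}\bigr)$. For the convex power $q=p-1>1$ the sharp sub-additivity constant is $2^{q-1}=2^{p-2}$, and $\Gamma(\max\{1,p-2\})$ falls well below this for a wide range of $p>2$: at $p=3$, $s=1$ one has $(1+s)^{p-1}=4$ while $\Gamma(\max\{1,1\})(1+1^{p-1})=2$, so the estimate is simply false there. Even in the range $1<p\le 2$ where your sub-additivity \emph{does} hold with constant~$1$, integrating $p\int_0^t(1+s^{p-1})\,{\rm d}s=pt+t^p$ produces linear coefficient $p$, not the $c_p=p-1$ you claim. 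Finally, the absorption step is also not sound: after the Young split the $t^p$-coefficient is $c_p\e^{1-p}+C(p)$, while the target is $(1+c_p\e)\e^{1-p}=\e^{1-p}+c_p\e^{2-p}$, so one would need $(c_p-1)\e^{1-p}+C(p)\le c_p\e^{2-p}$, which fails as $\e\to 0^+$ whenever $c_p>1$ because $\e^{1-p}$ dominates $\e^{2-p}$. The $p=1$ case you handle correctly, but for $p>1$ the argument should be rebuilt along the convexity route above.
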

	The proof is straightforward. It follows via convexity and a standard iteration process. See for instance Lemma~3.1 in~\cite{DKP16}.
	
	%
	%
	
	\vspace{2mm}
	\section{Proof of the fractional Logarithmic Lemma}\label{sec_log}
	In this section we prove our second main tool; that is, the Logarithmic Lemma~\ref{lem_log}.

	\begin{proof}[Proof of Lemma~{\ref{lem_log}}] 

        As pointed out in Remark~\ref{constant_c}  we denote by~$\tilde{c}$ the precise constant which satisfies inequalities
		\begin{equation}
						\label{dim0}
						|\eta^{-1}\circ \xi|_{\h^n} \, \geq \, \frac{1}{\tilde{c}}|\eta|_{\h^n} - |\xi|_{\h^n}, \qquad |\eta \circ \xi|_{\h^n} \, \leq \, \tilde{c}(|\eta|_{\h^n}+|\xi|_{\h^n}),
	    \end{equation}
even in the case when~$\tilde{c} \equiv 1$. 

		Fix such a constant $\tilde{c}$ and  choose $r>0$ such that $B_r \equiv B_r(\xi_0) \subset B_{\frac{R}{2\tilde{c}}} \equiv B_{\frac{R}{2\tilde{c}}}(\xi_0)$.
		Consider a smooth cut-off function~$\phi \in C^\infty_0(B_{3r/2})$  such that
	$$
		0 \leq \phi \leq 1, \qquad \phi \equiv 1 \mbox{ in } B_r \qquad \mbox{and }\qquad  |\nabla_{\h^n} \phi| \leq \textbf{c}r^{-1} \mbox{ in } B_{3r/2}.
	$$
		Now, take the following test function~$\psi$ in Definition~\ref{solution to inhomo pbm},
		$$
		\psi=(u+d)^{1-p}\phi^p.
		$$ 
		We have 
		\begin{eqnarray}
			\label{dim12}
			&&\int_{B_{2r}}f(\xi,u(\xi))\big(u(\xi)+d\big)^{1-p}\p^p(\xi) \, \,{\rm d}\xi\notag\\*[0.5ex]
			& &\qquad  =\, \int_{B_{2r}}\int_{B_{2r}}|\eta^{-1}\circ \xi|_{\h^n}^{-Q-sp}  |u(\xi)-u(\eta)|^{p-2}\big(u(\xi)-u(\eta)\big)\notag \\*
			& &\hspace{3.8cm}\times \left[\frac{\phi^p(\xi)}{(u(\xi)+d)^{p-1}}-\frac{\phi^p(\eta)}{(u(\eta)+d)^{p-1}}\right] \,{\rm d}\xi \,{\rm d}\eta\notag \\*
			& &\qquad \quad +\,2 \int_{\h^n \smallsetminus B_{2r}}\int_{B_{2r}}|\eta^{-1}\circ \xi|_{\h^n}^{-Q-sp}|u(\xi)-u(\eta)|^{p-2}\frac{u(\xi)-u(\eta)}{(u(\xi)+d)^{p-1}}\phi^p(\xi) \,{\rm d}\xi \,{\rm d}\eta\notag \\*[1ex]
			& & \qquad \qquad=: \, I_1 +I_2.
		\end{eqnarray}
		
		Using the very definition of the function~$\phi$ and the assumption on the datum~$f$, we can estimate the left-hand side of~\eqref{dim12} as follows,
		\begin{eqnarray*}
			&& \int_{B_{2r}}f(\xi,u(\xi))\big(u(\xi)+d)^{1-p}\p^p(\xi)  \,{\rm d}\xi \\*[0.5ex]
			&&  \quad = \, \int_{B_{2r}}(f(\xi,u))_+(u(\xi)+d)^{1-p}\p^p(\xi) \,{\rm d}\xi - \int_{B_{2r}}(f(\xi,u))_-(u(\xi)+d)^{1-p}\p^p(\xi) \, \,{\rm d}\xi\\*[0.5ex]
			&& \quad  \geq \, \int_{B_r}(f(\xi,u))_+(u(\xi)+d)^{1-p} \,{\rm d}\xi -\|f\|_{L^\infty(B_{2r})}\int_{B_{2r}}(u(\xi)+d)^{1-p}  \,{\rm d}\xi.
		\end{eqnarray*} 
		
		\vspace{2mm}
		We now focus on the remaining integrals on the right-hand side of~\eqref{dim12}. 
		
		We start with $I_1$. If $u(\xi) > u(\eta)$ then we can apply~Lemma~\ref{lem_gamma} stated at the end of Section~\ref{sec_caccioppoli}, by choosing there
		$$
		a=\phi(\xi), \quad b=\phi(\eta),
		$$
		and
		$$
		\e = \delta \frac{u(\xi)-u(\eta)}{u(\xi)+d} \in (0,1), \qquad \delta \in (0,1);
		$$
		since $u \geq 0$ in $B_{2r} \subset B_R$, we can therefore estimate the integrand in~$I_1$ as follows,
		\begin{eqnarray*}
			&& |\eta^{-1}\circ \xi|_{\h^n}^{-Q-sp}  |u(\xi)-u(\eta)|^{p-2}\big(u(\xi)-u(\eta)\big)
			\left[\frac{\phi^p(\xi)}{(u(\xi)+d)^{p-1}}-\frac{\phi^p(\eta)}{(u(\eta)+d)^{p-1}}\right]\\*[0.5ex] 
			&&\qquad \quad \leq\, |\eta^{-1}\circ \xi|_{\h^n}^{-Q-sp} \left(\frac{u(\xi)-u(\eta)}{u(\xi)+d}\right)^{p-1}\phi^p(\eta)\left[1 +c_p\delta\frac{u(\xi)-u(\eta)}{u(\xi)+d}-\left(\frac{u(\xi)+d}{u(\eta)+d}\right)^{p-1}\right]\\* 
			&&\qquad \qquad \, +c_p\delta^{1-p} |\eta^{-1}\circ \xi|_{\h^n}^{-Q-sp} |\phi(\xi)-\phi(\eta)|^p \notag \\*[1ex]
			&& \qquad \quad =: \, J_1 + J_2
		\end{eqnarray*} 
		
		Now, in order to estimate the contribution~$J_1$ we will follow the strategy in the proof of~Lemma~1.3 in~\cite{DKP16}. We firstly notice that
		\begin{equation}
			\label{dim1}
			J_1 \, \equiv \, |\eta^{-1}\circ \xi|_{\h^n}^{-Q-sp} \left(\frac{u(\xi)-u(\eta)}{u(\xi)+d}\right)^{p}\phi^p(\eta) \left[ \frac{1-\left(\frac{u(\eta)+d}{u(\xi)+d}\right)^{1-p}}{1-\frac{u(\eta)+d}{u(\xi)+d}}    +c_p\delta \right]
		\end{equation}
		Secondly, we can consider the real function~$g$ given by 
		$$
		g(t):= \frac{1-t^{1-p}}{1-t}=-\frac{p-1}{1-t}\int_{t}^{1}\tau^{-p}{\rm d}\tau, \qquad \forall t \in (0,1).
		$$
		Since $g$ is an increasing function, we have
		$$
		g(t) \leq -(p-1) \quad \forall t \in (0,1).
		$$
		Moreover, for any~$t \leq 1/2$, 
		$$
		g(t) \leq -\frac{p-1}{2p}\frac{t^{1-p}}{1-t}.
		$$
		Therefore, in the case when
		$$
		t=\frac{u(\eta)+d}{u(\xi)+d} \in \left(0,\,\frac{1}{2}\right];
		$$
		i.~e., 
		$$
		u(\eta) + d \leq \frac{u(\xi)+d}{2},
		$$
		then, since $\big(u(\xi)-u(\eta)\big)\big(u(\eta)+d\big)^{p-1}/\big(u(\xi)+d\big)^p \leq 1$, we get
		\begin{equation}
			\label{dim2}
			J_1 \, \leq \, |\eta^{-1}\circ \xi|_{\h^n}^{-Q-sp} \left(c_p\delta -\frac{p-1}{2^p}\right)\left[\frac{u(\xi)-u(\eta)}{u(\eta)+d}\right]^{p-1}\phi^p(\eta).
		\end{equation}
		Hence, it suffices to choose the following suitable~$\delta$ in the preceding inequality, 
		\begin{equation*}
			\label{dim3}
			\delta=\frac{p-1}{2^{p+1}c_p},
		\end{equation*}
		to get
		$$
		J_1 \, \leq\, -|\eta^{-1}\circ \xi|_{\h^n}^{-Q-sp} \frac{p-1}{2^{p+1}}\left[\frac{u(\xi)-u(\eta)}{u(\eta)+d}\right]^{p-1}\phi^p(\eta).
		$$
		
		\vspace{2mm}
		We consider case when
		$$
		u(\eta) + d > \frac{u(\xi)+d}{2};
		$$
		i.~\!e.,
		$$
		t=\frac{u(\eta)+d}{u(\xi)+d} \in \left(\frac{1}{2},\,1\right),
		$$
		we can choose the parameter~$\delta$ as in (\ref{dim3}), and we have
		\begin{equation}
			\label{dim4}
			J_1 \leq -|\eta^{-1}\circ \xi|_{\h^n}^{-Q-sp} \frac{(2^{p+1}-1)(p-1)}{2^{p+1}}\left[\frac{u(\xi)-u(\eta)}{u(\xi)+d}\right]^p\phi^p(\eta).
		\end{equation}
		\vspace{1mm}
		
		Furthermore, if $
		2(u(\eta) + d) < u(\xi)+d
		$, then\begin{equation}
			\label{dim5}
			\left[\log\left(\frac{u(\xi)+d}{u(\eta)+d}\right)\right]^p \leq c_p \left[\frac{u(\xi)-u(\eta)}{u(\eta)+d}\right]^{p-1}
		\end{equation}
		holds, where we  used the fact that $(\log x)^p \leq c(x-1)^{p-1}$ when $x >2$. 
		\vspace{2mm}
		
		On the other hand, if  $
		2(u(\eta) + d) \geq u(\xi)+d
		$, then -- recalling that we have assumed $u(\xi) > u(\eta)$ -- we have
		\begin{equation}
			\label{dim6}
			\left[\log\left(\frac{u(\xi)+d}{u(\eta)+d}\right)\right]^p
			\, =\, \left[\log\left(1 + \frac{u(\xi)-u(\eta)}{u(\eta)+d}\right)\right]^p 
			\,\leq\, 2^p \left[\frac{u(\xi)-u(\eta)}{u(\xi)+d}\right]^p,
		\end{equation}
		where we  used
		$$
		\log(1 + x) \leq x, \quad \forall \, x \geq 0, \qquad \mbox{with } x = \frac{u(\xi)-u(\eta)}{u(\eta)+d} \leq \frac{2[u(\xi)-u(\eta)]}{u(\xi)+d}.
		$$
		\vspace{1mm}
		
		Hence, combining (\ref{dim2}) with (\ref{dim4}), (\ref{dim5}), and (\ref{dim6}), we can conclude with
		\begin{eqnarray*}
			&& |\eta^{-1}\circ \xi|_{\h^n}^{-Q-sp}  |u(\xi)-u(\eta)|^{p-2}\big(u(\xi)-u(\eta)\big) \left[\frac{\phi^p(\xi)}{(u(\xi)+d)^{p-1}}-\frac{\phi^p(\eta)}{(u(\eta)+d)^{p-1}}\right]\\*[0.5ex] 
			&& \qquad \qquad \qquad \leq -\frac{1}{c_p}|\eta^{-1}\circ \xi|_{\h^n}^{-Q-sp}\left[\log\left(\frac{u(\xi)+d}{u(\eta)+d}\right)\right]^p\phi^p(\eta) \\*
			&&  \qquad  \qquad \qquad \quad +c_p\, \delta^{1-p} |\eta^{-1}\circ \xi|_{\h^n}^{-Q-ps} |\phi(\xi)-\phi(\eta)|^p.
		\end{eqnarray*}

		Notice that if $u(\xi)=u(\eta)$, then the same estimate above does trivially hold. If $u(\eta)>u(\xi)$ we can interchange the roles of $\xi$ and $\eta$ in the  computation above. Finally, we get the estimate for the integral~$I_1$ in~(\ref{dim1}),
		\begin{eqnarray}
			\label{dim7}
			I_1 &  \leq & -\frac{1}{c_{p,\delta}}\int_{B_{2r}}\int_{B_{2r}}|\eta^{-1}\circ \xi|_{\h^n}^{-Q-sp}\left|\log\left(\frac{u(\xi)+d}{u(\eta)+d}\right)\right|^p\phi^p(\eta) \,{\rm d}\xi \,{\rm d}\eta \\*
			& & +\, c_{p,\delta}\int_{B_{2r}}\int_{B_{2r}}|\eta^{-1}\circ \xi|_{\h^n}^{-Q-sp} |\phi(\xi)-\phi(\eta)|^p \,{\rm d}\xi \,{\rm d}\eta .\notag
		\end{eqnarray}
		
		For the second contribution in~\eqref{dim1}, namely $I_2$, we can proceed as follows. Firstly, we notice that it $\eta \in B_R$, then $u(\eta) \geq0$,  and so
		$$
		\frac{\big(u(\xi)-u(\eta)\big)_+^{p-1}}{\big(u(\xi)+d\big)^{p-1}}\,\leq\, 1 \qquad \forall \xi \in B_{2r}, \eta \in B_R.
		$$
		Moreover, when $\eta \in \h^n \smallsetminus B_R$, 
		$$
		\big(u(\xi)-u(\eta)\big)_+^{p-1} \, \leq\, 2^{p-1}\big[u^{p-1}(\xi)+(u(\eta))_-^{p-1}\big], \qquad \forall \xi \in B_{2r}.
		$$
		Then, since $u(\xi) \geq 0$ and $\phi(\xi) \leq 1$ on $B_{2r}$, the integral~$I_2$ can be estimated as follows,
		\begin{eqnarray*}
			I_2 & \leq & 2\int_{B_R \smallsetminus B_{2r}}\int_{B_{2r}}|\eta^{-1}\circ \xi|_{\h^n}^{-Q-sp}\big(u(\xi)-u(\eta)\big)_+^{p-1}(u(\xi)+d)^{1-p}\phi^p(\xi) \,{\rm d}\xi \,{\rm d}\eta\\*
			&& +2 \int_{\h^n\smallsetminus B_R}\int_{B_{2r}}|\eta^{-1}\circ \xi|_{\h^n}^{-Q-sp}(u(\xi)-u(\eta))_+^{p-1}\big(u(\xi)+d\big)^{1-p}\phi^p(\xi) \,{\rm d}\xi \,{\rm d}\eta \\*[0.5ex]
			& \leq & c_p\int_{B_R \smallsetminus B_{2r}}\int_{B_{2r}}|\eta^{-1}\circ \xi|_{\h^n}^{-Q-sp}\phi^p(\xi) \,{\rm d}\xi \,{\rm d}\eta \\*
			&& + c_p\int_{\h^n \smallsetminus B_R}\int_{B_{2r}}|\eta^{-1}\circ \xi|_{\h^n}^{-Q-sp}\big[u^{p-1}(\xi)+(u(\eta))_-^{p-1}\big]\big(u(\xi)+d\big)^{1-p}\phi^p(\xi)\,{\rm d}\xi \,{\rm d}\eta\,,
		\end{eqnarray*}
		and thus
		\begin{eqnarray}
			\label{dim8}
			I_2 & \leq& c_p\int_{B_R \smallsetminus B_{2r}}\int_{B_{2r}}|\eta^{-1}\circ \xi|_{\h^n}^{-Q-sp}\phi^p(\xi) \,{\rm d}\xi \,{\rm d}\eta\notag\\*
			&& +\, c_pd^{1-p}\int_{\h^n \smallsetminus B_R}\int_{B_{2r}}|\eta^{-1}\circ \xi|_{\h^n}^{-Q-sp}\phi^p(\xi)\,{\rm d}\xi \,{\rm d}\eta\notag \\*
			&& +\,c_pd^{1-p}\int_{\h^n \smallsetminus B_R}\int_{B_{2r}}|\eta^{-1}\circ \xi|_{\h^n}^{-Q-sp}(u(\eta))_-^{p-1}\,{\rm d}\xi \,{\rm d}\eta\notag \\*[1ex]
			&  =:& I_{2,1}+I_{2,2}+I_{2,3}.
		\end{eqnarray}
		
		From now on, in contrast with the proof in the Euclidean case in~\cite{DKP16} where the logarithmic estimates plainly follows, here we need to take care of the Heisenberg framework in order to deal with the tail contribution in~\eqref{loga}.
		Let us estimate the contribution in the right-side of~\eqref{dim8}. The integral~$I_{2,1}$ can be easily estimated by recalling the definition of the cut-off function~$\phi$; we have
		\begin{equation}
			\label{dim14}
			I_{2,1} \, \leq  \,c r^Q\int_{B_R \smallsetminus B_{2r}}\sup_{\xi \in B_{3r/2}}|\eta^{-1}\circ \xi|_{\h^n}^{-Q-sp} \,{\rm d}\eta 
			\,\leq \, \textbf{c}r^{Q-sp},
		\end{equation}
		where $\textbf{c}=\textbf{c}(n,p)$. 
		
		For any $\xi \in B_{2r}$, and any $\eta \in \h^n\smallsetminus B_R$, in view of~\eqref{dim0} and the symmetry of~$|\cdot|_{\h^n}$, we have
		\begin{eqnarray*}
			\frac{|\eta^{-1}\circ \xi_0|_{\h^n}}{|\eta^{-1}\circ \xi |_{\h^n}} & \leq& \frac{\tilde{c}(|\xi^{-1}\circ \xi_0|_{\h^n}+|\eta^{-1}\circ \xi|_{\h^n})}{|\eta^{-1}\circ \xi |_{\h^n}} \\*[0.5ex]
			& =& \tilde{c}+\tilde{c}\frac{|\xi^{-1}\circ \xi_0|_{\h^n}}{|\eta^{-1}\circ \xi |_{\h^n}} \\*[0.5ex]
			& \leq &\tilde{c}+\tilde{c}\frac{2r}{\frac{1}{\tilde{c}}|\eta^{-1}\circ\xi_0|_{\h^n} - | \xi^{-1}\circ \xi_0 |_{\h^n}}  \\*[0.5ex]  
			& \leq &\tilde{c} +
			\tilde{c}\frac{2r}{R/\tilde{c}-2r}\, :=\,C,
		\end{eqnarray*}
		where $C>0$ since $R>2\tilde{c}r$.
		This yields
		\begin{equation}
			\label{dim15}
			I_{2,2}\,   \leq \, \textbf{c}d^{1-p}r^Q\int_{\h^n \smallsetminus B_R}|\eta^{-1}\circ \xi_0|_{\h^n}^{-Q-sp}\,{\rm d}\eta
			\,  \leq \, \textbf{c}d^{1-p}\frac{r^{Q}}{R^{sp}},
		\end{equation}
		where we also used Lemma~\ref{lem1} with $\gamma=sp$ there; the constant $\textbf{c}$ depending only on $n,p$ and $s$.  
		\vspace{2mm}
		
		For what concerns the integral~$I_{2,3}$ in \eqref{dim8}, we have  		\begin{eqnarray}
			\label{dim10}
			I_{2,3} & =&c_pd^{1-p}\int_{\h^n \smallsetminus B_R}\int_{B_{2r}}|\eta^{-1}\circ \xi|_{\h^n}^{-Q-sp}(u(\eta))_-^{p-1}\,{\rm d}\xi \,{\rm d}\eta\notag  \\*[0.5ex]
			& \leq &\textbf{c}d^{1-p}|B_{2r}|\int_{\h^n \smallsetminus B_R}|\eta^{-1}\circ \xi_0|_{\h^n}^{-Q-sp}(u(\eta))_-^{p-1}\,{\rm d}\eta\notag \\*[0.5ex]
			& \leq &\textbf{c} d^{1-p}\frac{r^Q}{R^{sp}}[\textup{Tail}(u_-;\xi_0,R)]^{p-1},
		\end{eqnarray}
		as long as we enlarge the constant~$\textbf{c}=\textbf{c}(n,p,s)$. Combining (\ref{dim8}),(\ref{dim14}),(\ref{dim15}) with (\ref{dim10}) we finally obtain
		\begin{equation}
			\label{dim11}
			I_2 \, \leq \,  \textbf{c}r^{Q-sp} + \textbf{c}d^{1-p}r^{Q}R^{-sp}+ \textbf{c}r^Qd^{1-p}R^{-sp}[\textup{Tail}(u_-;\xi_0,R)]^{p-1}.
		\end{equation}
		Combining, now, (\ref{dim12}) with (\ref{dim7}) and (\ref{dim11}) we have
		\begin{eqnarray}
			\label{dim17}
			&&	\int_{B_r}\int_{B_r}|\eta^{-1}\circ \xi|_{\h^n}^{-Q-sp}\left|\log\left(\frac{u(\xi)+d}{u(\eta)+d}\right)\right|^p\phi^p(\eta)\,{\rm d}\xi \,{\rm d}\eta \notag \\*[0.5ex]
			&&\qquad\qquad\qquad \leq \textbf{c}\int_{B_{2r}}\int_{B_{2r}}|\eta^{-1}\circ \xi|_{\h^n}^{-Q-sp} |\phi(\xi)-\phi(\eta)|^p\,{\rm d}\xi \,{\rm d}\eta \\*
			&&\qquad\qquad\qquad\quad +\, \textbf{c}r^{Q-sp} + \textbf{c}d^{1-p}r^{Q}R^{-sp}+ \textbf{c}r^Qd^{1-p}R^{-sp}[\textup{Tail}(u_-;\xi_0,R)]^{p-1} \nonumber,
		\end{eqnarray}
		where $\textbf{c}$ depends only on $n,p$ and $s$. Let us consider the first integral on the right-hand side in (\ref{dim17}). Fixed $\eta \in B_{2r}$ there exists a vector $\tilde{\xi}$ such that $\xi = \eta \circ \tilde{\xi}$. Then we can rewrite 
		$$
		|\phi(\xi)-\phi(\eta)|=|\phi(\eta \circ \tilde{\xi})-\phi(\eta)|.
		$$
		Thanks to Theorem~1.41 in \cite{FS82} we have that the previous quantity can be estimated as follows,
		$$
		|\phi(\eta \circ \tilde{\xi})-\phi(\eta)| \leq \textbf{c}|\tilde{\xi}|_{\h^n}\sup_{B_{3r/2}}|\nabla_{\h^n} \phi|
		$$

		Hence, changing again variables and recalling the estimate from above on $|\nabla_{\h^n} \phi|$, we get
		\begin{eqnarray}
			\label{dim20}
			&& \int_{B_{2r}}\int_{B_{2r}}|\eta^{-1}\circ \xi|_{\h^n}^{-Q-sp} |\phi(\xi)-\phi(\eta)|^p  \,{\rm d}\xi \,{\rm d}\eta \notag\\*[0.5ex]
			&& \qquad \qquad \leq \, \textbf{c}r^{-p}\int_{B_{2r}}\int_{B_{2r}}|\eta^{-1}\circ \xi|_{\h^n}^{-Q-sp+p}\,{\rm d}\xi \,{\rm d}\eta\,.
		\end{eqnarray}
			For any $\eta \in B_{2r}$ note that a simple application of the triangular inequality yields
			$$
			|\eta^{-1}\circ \xi|_{\mathds{H}^n}\, \leq\, 	|\eta^{-1}\circ \xi_0|_{\mathds{H}^n} + 	|\xi^{-1}\circ \xi_0|_{\mathds{H}^n}\, \le\, 4r, \qquad \forall \xi \in B_{2r},
			$$
		   so that $B_{2r} \subset B_{4r}(\eta)$. Hence, as  seen in the local framework in~\cite{GT11,Tra14}, one can apply Proposition~5.4.4 in \cite{BLU07} to get
         \begin{eqnarray*}
         	\int_{B_{2r}}\int_{B_{2r}}|\eta^{-1}\circ \xi|_{\h^n}^{-Q-sp+p}\,{\rm d}\xi \,{\rm d}\eta &\le&                                     \int_{B_{2r}}\int_{B_{4r}(\eta)}|\eta^{-1}\circ \xi|_{\h^n}^{-Q-sp+p}\,{\rm d}\xi \, {\rm d}\eta\\*
         	&\leq& Q\omega_n\int_{B_{2r}}\int_0^{4r} \rho^{p-sp-1}\,{\rm d}\rho \, {\rm d}\eta\\*
         	&\leq& \frac{\textbf{c}r^{Q+p-sp}}{p-sp}\,,
         \end{eqnarray*}
         where we denote by~$\omega_n:= |B_1(0)|$. 
		The estimate in~\eqref{dim20} thus becomes
		$$
		\int_{B_{2r}}\int_{B_{2r}}|\eta^{-1}\circ \xi|_{\h^n}^{-Q-sp} |\phi(\xi)-\phi(\eta)|^p  \,{\rm d}\xi \,{\rm d}\eta 
		\, \leq\, \textbf{c}r^{Q-sp},
		$$
		and the proof is complete.
	\end{proof}

	\vspace{2mm}
	
	We conclude this section by presenting an important consequence of the Logarithmic Lemma, which will prove extremely useful in Section~\ref{sec_holder}.
	We firstly need to introduce the following standard notation. Let $v$ be in $L^1(S)$ and denote by $|S|$  the Lebesgue measure of the set $S \subset \h^n$ which we assume to be finite and strictly positive. Here and subsequently we write
	$$
	(v)_S := \,\dashint_{S}v(\xi)\,{\rm d}\xi \, =\, \frac{1}{|S|}\int_S v(\xi)\,{\rm d}\xi.
	$$
	
	We have the following
	\begin{corol}
		\label{s2 corol}
		Let $s \in (0,1)$, $p \in (1,\infty)$, and let $u \in W^{s,p}(\h^n)$ be the solution to problem~\textup{(\ref{problema})} such that $u\geq 0 $ in $B_R \equiv B_R(\xi_0) \subset \Omega$. Let $a,d >0,b>1$ and define
		$$
		v:=\min\big\{(\log(a+d)-\log(u+d))_+,\,\log(b)\big\}.
		$$
		Then, the following estimates holds true, for any $B_r \equiv B_r(\xi_0) \subset B_{R/2\tilde{c}}(\xi_0) \subset B_{R/2}(\xi_0)$ (where $\tilde{c} \geq 1$ is precisely the constant obtained in the proof of the Logarithmic Lemma),
		\begin{eqnarray}\label{eq_corlog}	
			\,\dashint_{B_r}|v-(v)_{B_r}|^p\,{\rm d}\xi & \leq & \textbf{c}+ \textbf{c}d^{1-p}\left(\frac{r}{R}\right)^{sp}\left\{1+\big[\textup{Tail}(u_-;\xi_0,R)\big]^{p-1}\right\}\\*
			&  & + \textbf{c}r^{sp}\|f\|_{L^\infty(B_{2r})}\ \dashint_{B_{2r}}(u(\xi)+d)^{1-p} \, \,{\rm d}\xi. \notag
		\end{eqnarray}
		where $\textbf{c}$ depends only on $n,p,s$.  
	\end{corol}
	\begin{proof}
		The estimate in~\eqref{eq_corlog} is a plain consequence of the Logarithmic Lemma~\ref{lem_log}. Firstly, we need to apply the fractional Poincar\'e inequality, whose proof can be found in~\cite{Min03} (see in particular on Page 297 there. See also the recent paper~\cite{CMPS16} for further Poincar\'e-type inequalities in the Heisenberg group). We get
		$$
		\,\dashint_{B_r}|v-(v)_{B_r}|^p\,{\rm d}\xi\, \leq\, c r^{sp-Q}\int_{B_r}\int_{B_r}|\eta^{-1}\circ \xi|_{\h^n}^{-Q-sp}|v(\xi)-v(\eta)|^p\,{\rm d}\xi \,{\rm d}\eta,
		$$
		where $c=c(n,p,s)$. 
		
		Now, it is sufficient to observe that $v$ is precisely a truncation of the sum of a constant and $\log(u+d)$. For this, we have
		\begin{eqnarray*}
			&&	\int_{B_r}\int_{B_r}|\eta^{-1}\circ \xi|_{\h^n}^{-Q-sp}|v(\xi)-v(\eta)|^p\,{\rm d}\xi \,{\rm d}\eta \\[0.5ex]
			&&\qquad \qquad\qquad\qquad \leq \int_{B_r}\int_{B_r} |\eta^{-1}\circ \xi|_{\h^n}^{-Q-sp}  \left|\log \left(\frac{u(\eta)+d}{u(\xi)+d}\right)\right|^p \,{\rm d}\xi \,{\rm d}\eta\notag \\
			&&\qquad \qquad\qquad\qquad \quad + \int_{B_r}(f(\xi,u))_+\big(u(\xi)+d\big)^{1-p} \, \,{\rm d}\xi,
		\end{eqnarray*}
		so that the desired estimate plainly follow by applying the estimate in~\eqref{loga}.\end{proof}
	
	%
	%
	
	\vspace{2mm}
	\section{Proof of the local boundedness}\label{sec_bdd}
	The aim of this section is to prove the local boundedness result in~Theorem~\ref{teo_bdd}. In our knowledge, such a result is new even in the case of the pure fractional sublaplacian on the Heisenberg group. Here we are able to prove that, via careful estimates based on the nonlocal tail of the solutions together with the Caccioppoli inequality proven in Section~\ref{sec_caccioppoli},
	one can extend the approach firstly seen  in~\cite{DKP16} for the counterpart in the Euclidean case. 
	\begin{proof}[Proof of Theorem~\ref{teo_bdd}] 
		Before starting, we would need to define a few quantities. For any $j \in \mathds{N}$ and $r>0$ such that $B_r(\xi_0) \equiv B_r \subset \Omega$,
		\begin{equation}
			\label{d3 1}
			r_j = \frac{1}{2}(1+2^{-j})r, \qquad \tilde{r}_j = \frac{r_j+r_{j+1}}{2},
		\end{equation}
		$$
		B_j = B_{r_j}(\xi_0),\qquad \tilde{B}_j =B_{\tilde{r}_j}(\xi_0).
		$$
		Also, 
		\begin{equation}
			\label{d3 2}
			\p_j \in C^\infty_0(\tilde{B}_j), \quad 0 \leq \p_j \leq 1, \quad \p_j \equiv 1 \mbox{ on } B_{j+1} \mbox{ and } |\nabla_{\h^n} \p_j| < 2^{j+3}/r,
		\end{equation}
		$$
		k_j = k+ (1-2^{-j})\tilde{k}, \quad \tilde{k}_j = \frac{k_{j+1}+k_j}{2}, \quad \tilde{k} \in \r^+ \mbox{ and } k \in \r,
		$$
		$$
		\tilde{w}_j = (u- \tilde{k}_j)_+ \qquad w_j = (u-k_j)_+.
		$$
		
		\vspace{2mm}
		We divide the proof into two steps. Firstly, we consider the subcritical case when $sp<Q$.
		Recalling the fractional Sobolev exponent $p^* = \displaystyle\frac{Qp}{Q-sp}$, we have
		$$
		\frac{p}{p^*}\,=\, \frac{p}{\frac{Qp}{Q-sp}}\, =\,\frac{Qp-sp^2}{Qp}\,=\, 1-\frac{sp}{Q}.
		$$
		Consequently,
		$$
		\left(\frac{1}{|B_j|}\right)^\frac{p}{p^*} \, =  \,\frac{1}{|B_j|}|B_j|^\frac{sp}{Q} \,=\, \frac{cr_j^{sp}}{|B_j|}.
		$$
		\vspace{2mm}
		
		We now apply the Sobolev inequality in~Theorem~\ref{sobolev}  to the function~$\tilde{w}_j\p_j$. It yields
		\begin{eqnarray*}
			\label{d3 3}
			&& \left(\,\dashint_{B_j}|\tilde{w}_j(\xi)\p_j(\xi)|^{p^*}\,{\rm d}\xi \right)^{\frac{p}{p^*}}\\*[0.5ex]
			&&\qquad \qquad  \qquad \leq cr_j^{sp}\,\dashint_{B_j}\int_{B_j}|\eta^{-1}\circ \xi|_{\h^n}^{-Q-sp}\,\big|\tilde{w}_j(\xi)\p_j(\xi)-\tilde{w}_j(\eta)\p_j(\eta)\big|^p\,{\rm d}\xi \,{\rm d}\eta,
		\end{eqnarray*}
		which, combined with the nonlocal Caccioppoli inequality~\eqref{caccioppoli}, gives
		\begin{eqnarray}\label{estimate}
			&&
			\left(\,\dashint_{B_j}|\tilde{w}_j(\xi)\p_j(\xi)|^{p^*}\,{\rm d}\xi \right)^{\frac{p}{p^*}} \notag\\*[0.5ex]
			&&\qquad\qquad \leq cr_j^{sp}\,\dashint_{B_j}\int_{B_j}|\eta^{-1}\circ \xi|_{\h^n}^{-Q-sp}\tilde{w}^p_j(\xi)|\p_j(\xi)-\p_j(\eta)|^p\,{\rm d}\xi \,{\rm d}\eta\\*
			&&\qquad\qquad \quad+\,cr_j^{sp}\,\dashint_{B_j}\tilde{w}_j(\eta)\p_j^p(\eta)\,{\rm d}\eta\left(\sup_{\eta \in \textup{spt } \p_j}\int_{\h^n \smallsetminus B_j}|\eta^{-1}\circ \xi|_{\h^n}^{-Q-sp}\tilde{w}_j^{p-1}(\xi)\,{\rm d}\xi\right)\notag\\*
			&&\qquad\qquad \quad +\,cr_j^{sp}\|f\|_{L^\infty(B_j)}\,\dashint_{B_j}\tilde{w}_j(\eta)\p_j^p(\eta)\,{\rm d}\eta\notag\\*[1ex]
			&&\qquad\qquad =:\, I_1 + I_2+I_3\,.\notag
		\end{eqnarray}
		
		We begin by estimating the first contribution in the right-hand side of~\eqref{estimate}. By applying the same strategy as in the proof of the Logarithmic Lemma in Section~\ref{sec_log}, we have
		\begin{eqnarray}
			I_1 & \leq & \textbf{c}2^{pj}r_j^{sp-p}\,\dashint_{B_j}\int_{B_j}|\eta^{-1}\circ \xi|_{\h^n}^{-Q-sp+p}\tilde{w}^p_j(\xi)\,{\rm d}\xi \,{\rm d}\eta\notag \\*[0.5ex]
			& \leq & \textbf{c}2^{pj}r_j^{sp-p}\,\dashint_{B_j}\tilde{w}_j^p(\xi)\int_{B_j}|\eta^{-1}\circ \xi|_{\h^n}^{-Q-sp+p}\,{\rm d}\eta \,{\rm d}\xi\notag  \\*[0.5ex]
			& \leq &\textbf{c}2^{pj}\,\dashint_{B_j}\tilde{w}_j^p(\xi)\,{\rm d}\xi.
		\end{eqnarray} 
		\vspace{2mm}
		
		Now, we note that $\tilde{w}_j \leq w^p_j /(\tilde{k}_j-k_j)^{p-1}$. Moreover, since~$\eta \in \textup{spt } \p_j \subseteq \tilde{B}_j$ and~$\xi \in \h^n \smallsetminus B_j$, we have
		$$
		\frac{|\xi^{-1}\circ \xi_0|_{\h^n}}{|\eta^{-1}\circ \xi|_{\h^n}} 
		\,\leq\, \frac{c(|\eta^{-1}\circ \xi_0|_{\h^n}+|\eta^{-1}\circ \xi|_{\h^n})}{|\eta^{-1}\circ \xi|_{\h^n}}
		\,  \leq \, \tilde{c} + \frac{\tilde{c}\tilde{r}_j}{r_j-\tilde{r}_j}
		\, \leq \, \textbf{c}2^{j+4}.
		$$
		For this,
		\begin{eqnarray}
			I_2 & \leq & \textbf{c}2^{j(Q+sp)}r_j^{sp}\,\dashint_{B_j}\frac{w_j^p(\eta)}{(\tilde{k}_j-k_j)^{p-1}}\,{\rm d}\eta\int_{\h^n \smallsetminus B_j}\frac{w^{p-1}_j(\xi)}{|\xi^{-1}\circ \xi_0|_{\h^n}^{Q+sp}}\,{\rm d}\xi\notag\\*[0.5ex]
			& \leq & \frac{\textbf{c}2^{j(Q+sp+p-1)}}{\tilde{k}^{p-1}} [\textup{Tail}(w_0;\xi_0,r/2)]^{p-1}\,\dashint_{B_j}w_j^p(\eta)\,{\rm d}\eta.
		\end{eqnarray}
		
		Since $\tilde{w}_j \leq w_j$, $r_j \leq r$ and $\p \leq 1$, the third integral in~\eqref{estimate} can be easily estimated as follows
		\begin{equation}
			I_3\, \leq \, r^{sp}\|f\|_{L^\infty(B_{r})}\,\dashint_{B_j}w_j^p(\eta)\,{\rm d}\eta.
		\end{equation}
		
		For what concerns the left-hand side in~\eqref{estimate}, we have
		\begin{align*}
			\left(\,\dashint_{B_j}|\tilde{w}_j(\xi)\p_j(\xi)|^{p^*}\,{\rm d}\xi \right)^{\frac{p}{p^*}} & \geq (k_{j+1}-\tilde{k}_j)^{\frac{(p^*-p)p}{p^*}}\left(\,\dashint_{B_{j+1}}w^p_{j+1}(\xi)\,{\rm d}\xi\right)^\frac{p}{p^*}\\*[0.5ex]
			& = \left(\frac{\tilde{k}}{2^{j+2}}\right)^{\frac{(p^*-p)p}{p^*}}\left(\,\dashint_{B_{j+1}}w^p_{j+1}(\xi)\,{\rm d}\xi\right)^\frac{p}{p^*}.
		\end{align*}
		\vspace{2mm}
		
		Then, calling $A_j := \displaystyle\left(\,\dashint_{B_{j}}w^p_{j}(\xi)\,{\rm d}\xi\right)^\frac{1}{p}$ we obtain
		\begin{eqnarray}
			\label{d3 4}
			\left(\frac{\tilde{k}^{1-\frac{p}{p^*}}}{2^{(j+2)\frac{p^*-p}{p^*}}}\right)^p A_{j+1}^\frac{p^2}{p^*}
			&\leq& \textbf{c}2^{j(Q+sp+p-1)}\bigg(1+r^{sp}\|f\|_{L^\infty(B_r)}\\*
   &&+\frac{(\textup{Tail}(w_0;\xi_0,r/2))^{p-1}}{\tilde{k}^{p-1}}\bigg)A_j^p.\nonumber
		\end{eqnarray} 
		\vspace{2mm}
		
		Now, by taking 
		\begin{equation}
			\label{d3 5}
			\tilde{k} \geq \delta [\textup{Tail}(w_0;\xi_0,r/2)], \qquad \delta\in (0,1]\,,
		\end{equation}
		we obtain
		\begin{equation}
			\label{d3 6}
			\left(\frac{A_{j+1}}{\tilde{k}}\right)^\frac{p}{p^*} \leq \delta^\frac{1-p}{p}\bar{c}^\frac{p}{p^*}2^{j\left(\frac{sp}{Q}+\frac{Q+sp+p-1}{p}\right)}\frac{A_j}{\tilde{k}},
		\end{equation}
		where $\bar{c}=\textbf{c}^\frac{p^*}{p^2}[\delta^{p-1}(1+r^{sp}\|f\|_{L^\infty(B_r)})+1]^\frac{p^*}{p^2}2^\frac{2(p^*-p)}{p}$.
		\vspace{2mm}
		
		We set
		$$
		C:= 2^{\frac{sp}{Q-sp}+\frac{Q(Q+sp+p-1)}{p(Q-sp)}}>1, \qquad \beta = \frac{p^*}{p}-1,
		$$
		so that the estimate in~\eqref{d3 6} can be rewritten as
		\begin{equation}
			\label{d3 7}
			\frac{A_{j+1}}{\tilde{k}} \leq \delta^\frac{p^*(1-p)}{p^2}\bar{c}C^j \left(\frac{A_j}{\tilde{k}}\right)^{1+\beta}\,.
		\end{equation}
		Thus it suffices to prove that the following estimate does hold,
		\begin{equation}
			\label{d3 8}
			\frac{A_0}{\tilde{k}} \leq \delta^\frac{p^*(p-1)}{p^2\beta}\bar{c}^{-\frac{1}{\beta}}C^{-\frac{1}{\beta^2}},
		\end{equation}
		and by a standard iteration argument,  
		 it will follow that $A_j \rightarrow 0$ as $j \rightarrow \infty $. 
		\vspace{2mm}
		
		Since
		$$
		\frac{p^*(p-1)}{p^2\beta}=\frac{(p-1)Q}{sp^2},
		$$
		we then choose
		$$
		\tilde{k} := \delta \textup{Tail}(w_0;\xi_0,r/2) + \delta^{-\frac{(p-1)Q}{sp^2}}H A_0, \qquad H = \bar{c}^\frac{1}{\beta}C^\frac{1}{\beta^2},
		$$
		which is in accordance with (\ref{d3 5}).
		
		It follows
		\begin{eqnarray*}
			\sup_{B_{r/2}}u & \leq& k +\tilde{k}\\*[-1ex]
			& =& k+\delta \textup{Tail}((u-k)_+;\xi_0,r/2) +  \delta^{-\frac{(p-1)Q}{sp^2}}H  \left(\,\dashint_{B_r}(u-k)_+^p\right)^\frac{1}{p}, 
		\end{eqnarray*}
		which gives the desired result by choosing $k=0$. The proof is complete in the case when~$sp<Q$.
		\vspace{2mm}
		
		We now turn to the borderline case, when $sp=Q$, which in the Euclidean case in~\cite{DKP16} is mentioned but not even sketched. We fill this gap here, by providing all the details to investigate such a case. Choose $0<s_1 < s <1$;
		in particular $\tilde{w}_j \p_j -(\tilde{w}_j\p_j)_{B_j} \in W^{s,p}(B_j) \subseteq W^{s_1,p}(B_j)$.
		Clearly, $s_1p < sp = Q$, and we have the right to apply the fractional Sobolev inequality which gives, 
		for any $p <q< p_1^*:=\frac{Qp}{Q-s_1p}$,  
		\begin{equation}\label{d3 9a}
			\|\tilde{w}_j \p_j -(\tilde{w}_j\p_j)_{B_j}\|_{L^q(B_j)}  \leq |B_j|^\frac{p_1^*-q}{q p_1^*}\|\tilde{w}_j \p_j -(\tilde{w}_j\p_j)_{B_j}\|_{L^{p_1^*}(B_j)}.
		\end{equation}
		Thus, we can write
		\begin{eqnarray}\label{d3 9}
			&& \left|\left( \,\dashint_{B_j}|\tilde{w}_j(\xi)\p_j(\xi)|^q \,{\rm d}\xi\right)^\frac{1}{q}-\left|\,\dashint_{B_j} \tilde{w}_j(\xi)\p_j(\xi)\,{\rm d}\xi\right| \right|^p\notag\\*
			&&  \leq c \left(\,\dashint_{B_j}|\tilde{w}_j(\xi)\p_j(\xi)-(\tilde{w}_j\p_j)_{B_j}|^{q}\,{\rm d}\xi \right)^{\frac{p}{q}}\notag\\*[1ex]
			&&  \leq c|B_j|^{\frac{(p_1^*-q)p}{q p_1^*}-\frac{p}{q}}\left(\int_{B_j}|\tilde{w}_j(\xi)\p_j(\xi)-(\tilde{w}_j\p_j)_{B_j}|^{p_1^*}\,{\rm d}\xi \right)^{\frac{p}{p_1^*}}\notag\\*[1ex]
			&&   \leq c \frac{r_j^{s_1p}}{r_j^Q}\int_{B_j}\int_{B_j}|\tilde{w}_j(\xi)\p_j(\xi)-\tilde{w}_j(\eta)\p_j(\eta)|^p\ \frac{ \,{\rm d}\xi \,{\rm d}\eta}{|\eta^{-1} \circ \xi|_{\h^n}^{Q+s_1p}},
		\end{eqnarray}
   where in the last inequality  
		we  also used  
		that
		\begin{equation*}
			|B_j|^{\frac{(p_1^*-q)p}{q p_1^*}-\frac{p}{q}}=|B_j|^{-\frac{p}{p_1^*}}=|B_j|^{\frac{s_1p}{Q}-1}=c\frac{r_j^{s_1p}}{r_j^Q}.
		\end{equation*}
		In addition, we notice that
		\begin{eqnarray*}
			&&\left|\left(\,\dashint_{B_j}|\tilde{w}_j(\xi)\p_j(\xi)|^q \,{\rm d}\xi\right)^\frac{1}{q}-\left|\,\dashint_{B_j} \tilde{w}_j(\xi)\p_j(\xi)\,{\rm d}\xi\right| \right|^p\\*
			&& \qquad\qquad\qquad\qquad\geq \, \left(\,\dashint_{B_j}|\tilde{w}_j(\xi)\p_j(\xi)|^q \,{\rm d}\xi\right)^\frac{p}{q}  -\,\dashint_{B_j}|\tilde{w}_j(\xi)\p_j(\xi)|^p d \xi.
		\end{eqnarray*}
		
		We are now in a position to apply the nonlocal Caccioppoli-type inequality in~\eqref{d3 9}, as done before in the subcritical case. Similarly, we get 		\begin{eqnarray*}
			&&\left(\,\dashint_{B_j}|\tilde{w}_j(\xi)\p_j(\xi)|^q\,{\rm d}\xi \right)^{\frac{p}{q}}  \\*
			&& \quad \qquad  \leq \, \textbf{c}\,2^{j(Q+sp+p-1)}\left( 1+r^{sp}\|f\|_{L^\infty(B_r)}+  \frac{[\textup{Tail}(w_0;\xi_0,r/2)]^{p-1}}{\tilde{k}^{p-1}}\right)\,\dashint_{B_j}w_j^p(\eta)\,{\rm d}\eta.
		\end{eqnarray*} 
		
		Moreover, the term on the left-hand side in the inequality above can be estimated as follows
		\begin{equation}
			\left(\,\dashint_{B_j}|\tilde{w}_j(\xi)\p_j(\xi)|^q\,{\rm d}\xi \right)^{\frac{p}{q}} 
			\,\geq\, \left(\frac{\tilde{k}^{1-\frac{p}{q}}}{2^{(j+2)\frac{(q-p)}{q}}}\right)^p\left(\,\dashint_{B_{j+1}}w_{j+1}^p \, \,{\rm d}\xi \right)^{\frac{p}{q}}.
		\end{equation}
		\vspace{2mm}
		
		We set $A_j := \displaystyle\left(\,\dashint_{B_{j}}w^p_{j}(\xi)\,{\rm d}\xi\right)^\frac{1}{p}$
		and we choose $\tilde{k}$ as in~\eqref{d3 5}. It yields
		\begin{equation}
			\label{d3 10}
			\frac{A_{j+1}}{\tilde{k}} \leq \delta^{\frac{q(1-p)}{p^2}}\bar{c}C^j\left(\frac{A_j}{\tilde{k}}\right)^{1+\beta},
		\end{equation}
		where 
		\begin{eqnarray*}
			&& \bar{c}=\textbf{c}^\frac{q}{p^2}\big[\delta^{p-1}\big(1+r^{sp}\|f\|_{L^\infty(B_r)}\big)+1\big]^\frac{q}{p^2}2^\frac{2(q-p)}{p}, \\*[1ex] 
			&& \displaystyle C:=2^{\frac{q(Q+sp+p-1)}{p^2}+\frac{q-p}{p}}>1, \quad \mbox{and} \quad \beta := \frac{q}{p}-1.
		\end{eqnarray*}
		We can now estimate the term~$A_0$ as in \eqref{d3 8}, by replacing $p^*$ with $q$, and considering as $\bar{c}, C$ and $\beta$ the quantities defined in the display above. Then, 
		$$
		A_j \rightarrow 0 \  \text{as} \ j \rightarrow +\infty.
		$$ 
		Taking 
		$$
		\tilde{k} := \delta \textup{Tail}(w_0;\xi_0,r/2) + \delta^{-\frac{(p-1)q}{p(q-p)}}H A_0, \qquad H = \bar{c}^\frac{1}{\beta}C^\frac{1}{\beta^2},
		$$
		which is in clear accordance with~\eqref{d3 5}, we finally deduce that
		\begin{equation*}
			\sup_{B_{r/2}}u 
			\, \leq\, k+\delta \textup{Tail}((u-k)_+;\xi_0,r/2) + \delta^{-\frac{(p-1)q}{p(q-p)}}H  \left(\,\dashint_{B_r}(u-k)_+^p\right)^\frac{1}{p}, 
		\end{equation*}
		which gives the desired result by choosing $k=0$.
	\end{proof}

	%
	%
	
	\vspace{2mm}
	\section{Proof of the H\"older continuity}\label{sec_holder}
	This last section is devoted to the proof of the H\"older continuity of solutions, namely Theorem~\ref{teo_holder}. An iteration lemma is the keypoint of the proof, and, as before, we would have to handle the nonlocality of the involved operator, together with the geometry of our settings. A certain care is required and in the proof below all the estimates proven in the previous sections will appear.
	\vspace{2mm}
	
	We need to fix some notation. For any $j \in \mathds{N}$, let $0 < r < R/2$ for some $R$ such that $B_R (\xi_0) \subset \Omega$,
	$$
	r_j := \sigma^j \frac{r}{2}, \quad \sigma \in \left(0,\frac{1}{4\tilde{c}}\right], \quad B_j := B_{r_j}(\xi_0)\,,
	$$ 
    where, recalling Remark~\ref{constant_c}, the constant~$\tilde{c}$ is the one given in Proposition~\ref{prop2}. 
    
	Moreover, let us define 
	$$
	\frac{1}{2}\omega(r_0):= \frac{1}{2}\omega(r)= \textup{Tail}(u;\xi_0,r/2)+ \textbf{c}\left(\,\dashint_{B_r}u_+^p\,{\rm d}\xi\right)^\frac{1}{p},
	$$
	with  
	$\textbf{c}$ as in Theorem \ref{teo_bdd} and
	$$
	\omega(r_j):= \left(\frac{r_j}{r_0}\right)^\alpha \omega(r_0) \qquad \mbox{for some } \alpha < \frac{sp}{p-1}.
	$$ In order to prove the Theorem \ref{teo_holder} it suffices to prove the following
	\begin{lemma}
		Under the notation introduced above, let $u \in W^{s,p}(\h^n)$ be a solution to problem~\textup{(\ref{problema})}. Then
		\begin{equation}
			\label{s4 1}
			\mathop{\textup{osc}}\limits_{B_j} \, u \leq \omega(r_j), \qquad \forall j =0,1,2,\dots
		\end{equation}
	\end{lemma}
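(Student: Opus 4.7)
The plan is to proceed by strong induction on $j$, in the spirit of the classical De Giorgi oscillation-decay argument, adapted to the nonlocal Heisenberg setting by using the Caccioppoli estimate with tail (Theorem~\ref{teo_caccioppoli}), the Logarithmic Lemma (Lemma~\ref{lem_log}), and the boundedness result (Theorem~\ref{teo_bdd}) in a coordinated way. Once \eqref{s4 1} is established for every $j$, the H\"older continuity follows by rewriting $r_j/r_0 = \sigma^j$ and interpolating to arbitrary $\rho \in (0,r)$.

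The base case $j=0$ is immediate from Theorem~\ref{teo_bdd} applied with $\delta = 1$ to $u$ as a subsolution, and analogously to $-u$ (which solves an equation of the same form with the admissible datum $-f(\xi,u)$); summing the two supremum estimates and enlarging the constant $\textbf{c}$ gives $\mathop{\rm osc}_{B_{r/2}} u \le \omega(r_0)$. For the inductive step, assume $\mathop{\rm osc}_{B_i} u \le \omega(r_i)$ for all $i\le j$ and consider $u_j := u - \inf_{B_j} u$, which is nonnegative on $B_j$ with $\sup_{B_j} u_j \le \omega(r_j)$. After replacing, if necessary, $u_j$ by $\omega(r_j) - u_j$ (both are nonnegative on $B_j$ and each is, up to an additive constant, still a solution of an equation of the form~\eqref{problema}), we may suppose
\[
\bigl|\{u_j \ge \tfrac{1}{2}\omega(r_j)\}\cap B_{j+1}\bigr| \le \tfrac{1}{2}|B_{j+1}|.
\]

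Now apply Corollary~\ref{s2 corol} to $u_j$ with $a = \tfrac{1}{2}\omega(r_j)$, $d = \varepsilon\,\omega(r_j)$ for a small parameter $\varepsilon = \varepsilon(n,p,s)$, and $R = r_{j-1}$ (or any fixed fraction of it). The Poincar\'e-type left-hand side combined with the previous density bound forces the truncated logarithm $v$ to satisfy a lower bound on a set of definite measure inside $B_{j+1}$, which translates into
\[
\bigl|\{u_j \le \lambda\,\omega(r_j)\} \cap B_{j+1}\bigr| \le \theta\,|B_{j+1}|,
\]
for some $\lambda\in (0,1)$ and $\theta\in(0,1)$ depending only on $n,p,s$ (and on how small one chooses $\varepsilon$). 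Feeding this smallness estimate into a De Giorgi-type iteration driven by the Caccioppoli inequality~\eqref{caccioppoli} applied to the levels $w_k = (\lambda\omega(r_j) - u_j - 2^{-k}\lambda\omega(r_j))_+$ on shrinking balls between $B_{j+1}$ and a smaller one, one obtains $u_j \ge \tfrac{1}{2}\lambda\,\omega(r_j)$ on a ball $B_{j+1}$, and therefore
\[
\mathop{\rm osc}_{B_{j+1}} u \le (1-\tfrac{1}{2}\lambda)\,\omega(r_j).
\]
Finally, one chooses $\sigma$ small and $\alpha>0$ small enough so that $\sigma^\alpha \ge 1-\tfrac{1}{2}\lambda$, closing the induction.

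The main obstacle, and the sole place where the Heisenberg nonlocal structure really interferes with the Euclidean blueprint of~\cite{DKP16}, is the propagation of the tail terms through the induction. Every application of Theorem~\ref{teo_caccioppoli}, Lemma~\ref{lem_log} and Corollary~\ref{s2 corol} at scale $r_j$ introduces a contribution of the form $\textup{Tail}((u-k_j)_\pm;\xi_0,r_j)$; splitting $\mathds{H}^n\smallsetminus B_j$ into the dyadic annuli $B_{i}\smallsetminus B_{i+1}$ for $i<j$ and the complement of $B_0$, and using the inductive bound $\mathop{\rm osc}_{B_i} u \le \omega(r_i) = \sigma^{(i-j)\alpha}\omega(r_j)$ together with Lemma~\ref{lem1}, each shell contributes a term comparable to $\sigma^{i(sp-\alpha(p-1))}\,[\omega(r_j)]^{p-1}$. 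The series is summable precisely under the hypothesis $\alpha(p-1)<sp$, which is exactly the restriction on $\alpha$ appearing in the statement of Theorem~\ref{teo_holder}. Provided $\sigma$ is then chosen small enough to absorb this finite geometric sum into $\omega(r_j)$ (with room to spare for the Tail$(u;\xi_0,r/2)$ contribution in the definition of $\omega(r_0)$) and to guarantee $\sigma^\alpha \ge 1-\tfrac{1}{2}\lambda$, the induction step goes through and \eqref{s4 1} is proved for every $j$.
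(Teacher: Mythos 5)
Your plan follows essentially the same architecture as the paper's proof: base case from Theorem~\ref{teo_bdd} with $\delta=1$, strong induction with the density alternative, propagation of the tail through the dyadic annuli $B_{i-1}\smallsetminus B_i$ using the inductive bound and Lemma~\ref{lem1} (the key estimate (\ref{s4 6})), the log estimate via Corollary~\ref{s2 corol}, and a De Giorgi level-set iteration driven by the Caccioppoli inequality. The identification of $\alpha(p-1)<sp$ as the condition making the geometric sum over the annuli converge is exactly right, and matches the paper's computation in (\ref{tail2}).

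There are, however, two points where the execution as you describe it would break. First, you apply Corollary~\ref{s2 corol} with $d=\varepsilon\,\omega(r_j)$, whereas the paper takes $d=\sigma^{sp/(p-1)}$, a constant independent of $j$ and of $\omega(r_j)$. This is not cosmetic: Corollary~\ref{s2 corol} as stated carries the term $\textbf{c}\,d^{1-p}(r/R)^{sp}\bigl\{1+[\textup{Tail}(u_-;\xi_0,R)]^{p-1}\bigr\}$. With the paper's $d$, the factor $d^{1-p}(r_{j+1}/r_j)^{sp}$ equals $1$ exactly, so the ``$+1$'' inside the braces is harmless. With your $d=\varepsilon\,\omega(r_j)$, the $d^{1-p}[\textup{Tail}]^{p-1}$ part is fine (the $\omega(r_j)^{\pm(p-1)}$ cancel, leaving $\varepsilon^{1-p}\sigma^{sp-\alpha(p-1)}$), but the $d^{1-p}(r_{j+1}/r_j)^{sp}\cdot 1$ contribution becomes $\varepsilon^{1-p}\omega(r_j)^{1-p}\sigma^{sp}$, which diverges as $j\to\infty$ since $\omega(r_j)\to 0$ and $1-p<0$. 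So with Corollary~\ref{s2 corol} exactly as stated you do not get a bound uniform in $j$, and the ensuing density estimate (\ref{s4 10}) degenerates; you would need to re-derive the Log Lemma with a sharper constant in front of the ``$+1$'' to make your scaling of $d$ work. Second, you propose $R=r_{j-1}$ in Corollary~\ref{s2 corol}, but the hypothesis there is $u_j\ge 0$ on $B_R$; since $u_j=u-\inf_{B_j}u$, nonnegativity is only guaranteed on $B_j$, not on the larger ball $B_{j-1}$. The correct choice is $R=r_j$ with $r=2r_{j+1}$, which is compatible with $\sigma\le 1/(4\tilde c)$. Finally, a minor ordering issue: $\sigma^{\alpha}\ge 1-\tfrac12\lambda$ is not achieved by taking $\sigma$ small (that makes $\sigma^\alpha$ smaller); the paper first fixes $\sigma$ from the density estimate and the geometry, then picks $\alpha$ small enough so that $\sigma^\alpha$ is close to $1$.
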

	\begin{proof}
		We will proceed by induction. For this, note that by the definition of $\omega(r_0)$ and Theorem~\ref{teo_bdd} (with $\delta =1$ there), the estimates (\ref{s4 1}) does trivially hold for $j=0$, since both the functions $(u)_+$ and $(-u)_+$ are weak subsolution.
		
		Now, we make a strong induction assumption and assume that (\ref{s4 1}) is valid for all $i \in \big\{1,\dots,j\big\}$ for some $j \geq 0$, and then prove that it holds also for $j+1$. We have that either 
		\begin{equation}
			\label{s4 2}
			\frac{\big|2B_{j+1} \cap \big\{u \geq \inf_{B_j}u+\omega(r_j)/2\big\}\big|}{\big|2B_{j+1}\big|} \,\geq\, \frac{1}{2},
		\end{equation}
		or 
		\begin{equation}
			\label{s4 3}
			\frac{\big|2B_{j+1} \cap \big\{u \leq \inf_{B_j}u+\omega(r_j)/2\big\}\big|}{\big|2B_{j+1}\big|} \,\geq\, \frac{1}{2},
		\end{equation}
		must hold. If (\ref{s4 2}) holds, then we set $u_j := u -\inf_{B_j}u$; if (\ref{s4 3}) holds, then we set $u_j := \omega(r_j)-(u-\inf_{B_j}u)$. Consequently, in all the cases we have that $u_j \geq 0$ in $B_j$ and the following estimate holds,
		\begin{equation}
			\label{s4 4}
			\frac{\big|2B_{j+1}\cap \{u_j \geq \omega(r_j)/2\}\big|}{\big|2B_{j+1}\big|}\geq \frac{1}{2}.
		\end{equation}
		Moreover, $u_j$ is a weak solution which satisfies 
		\begin{equation}
			\label{s4 5}
			\sup_{B_i}|u_j| \leq 2 \omega(r_i), \qquad \forall i \in \{1,\dots, j\}.
		\end{equation}
		\vspace{2mm}
		
		We now claim that under the induction assumption we have
		\begin{equation}
			\label{s4 6}
			[\textup{Tail}(u_j;\xi_0,r_j)]^{p-1} \,\leq\, \textbf{c}\sigma^{-\alpha(p-1)}[\omega(r_j)]^{p-1},
		\end{equation}
		where $\textbf{c}$ depends only on $n,p,s$ 
		and $\alpha$; in particular it is independent of $\sigma$. Indeed,
		\begin{eqnarray}\label{tail1}
			[\textup{Tail}(u_j;\xi_0,r_j)]^{p-1} & = & r_j^{sp}\sum_{i=1}^{j}\int_{B_{i-1} \smallsetminus B_i} |u_j(\xi)|^{p-1}|\xi_0^{-1}\circ \xi|_{\h^n}^{-Q-sp}\,{\rm d}\xi \notag\\*
			& &+ r_j^{sp}\int_{\h^n \smallsetminus B_0}|u_j(\xi)|^{p-1}|\xi_0^{-1}\circ \xi|_{\h^n}^{-Q-sp}\,{\rm d}\xi \notag\\*[1ex]
			& \leq& r_j^{sp}\sum_{i=1}^{j}[\sup_{B_{i-1}}|u_j|]^{p-1}\int_{\h^n \smallsetminus B_i}|\xi_0^{-1}\circ \xi|_{\h^n}^{-Q-sp}\,{\rm d}\xi \notag\\*
			&& + r_j^{sp}\int_{\h^n \smallsetminus B_0}|u_j(\xi)|^{p-1}|\xi_0^{-1}\circ \xi|_{\h^n}^{-Q-sp}\,{\rm d}\xi \notag\\*[1ex]
			& \leq& \textbf{c}\sum_{i=1}^{j}\left(\frac{r_j}{r_i}\right)^{sp}[\omega(r_{i-1})]^{p-1},
		\end{eqnarray}
		where in the last line we also used~\eqref{s4 5} and the fact that
		\begin{align*}
			\int_{\h^n \smallsetminus B_0}&|u_j (\xi)|^{p-1}|\xi_0^{-1}\circ \xi|_{\h^n}^{-Q-sp}\,{\rm d}\xi\\*[0.5ex]
			& \leq c r_0^{-sp}\sup_{B_0}|u|^{p-1}+cr_0^{-sp}[\omega(r_0)]^{p-1}+c \int_{\h^n \smallsetminus B_0}|u(\xi)|^{p-1}|\xi_0^{-1}\circ \xi|_{\h^n}^{-Q-sp}\,{\rm d}\xi\\*[0.5ex]
			& \leq c r_1^{-sp}[\omega(r_0)]^{p-1}.
		\end{align*} 
		
		Moreover, we have
		\begin{align}\label{tail2}
			\sum_{i=1}^{j}\left(\frac{r_j}{r_i}\right)^{sp}&[\omega(r_{i-1})]^{p-1}  \notag\\*[0.5ex]
			& = [\omega(r_0)]^{p-1}\left(\frac{r_j}{r_0}\right)^{\alpha(p-1)}\sum_{i=1}^{j}\left(\frac{r_{i-1}}{r_i}\right)^{\alpha(p-1)}\left(\frac{r_j}{r_i}\right)^{sp-\alpha(p-1)} \notag\\*[0.5ex]
			& = [\omega(r_j)]^{p-1}\sigma^{-\alpha(p-1)}\sum_{i=0}^{j-1}\sigma^{i(sp-\alpha(p-1))} \notag\\*[0.5ex]
			& \leq [\omega(r_j)]^{p-1}\frac{\sigma^{-\alpha(p-1)}}{1-\sigma^{sp-\alpha(p-1)}} \notag\\*[0.5ex]
			& \leq \frac{4^{sp-\alpha(p-1)}}{\log(4)(sp-\alpha(p-1))}\sigma^{-\alpha(p-1)}[\omega(r_j)]^{p-1}
		\end{align}
		where we  used the fact that~$\sigma \leq \frac{1}{4 \tilde{c}} \leq \frac{1}{4}$ and $\alpha < sp/(p-1)$. Combining~\eqref{tail1} with~\eqref{tail2} yields the desired estimate in~(\ref{s4 6}).
		\vspace{2mm}
		
		Next, let us consider the function~$v$ defined by
		\begin{equation}
			\label{s4 7}
			v := \min \left\{\left(\log \left(\frac{\omega(r_j)/2+d}{u_j+d}\right)\right)_+,\, k \right\}, \qquad k >0.
		\end{equation}
        Since we have chosen $\sigma \leq \frac{1}{4 \tilde{c}}$ we have that $2B_{j+1} \subset B_\frac{r_j}{2 \tilde{c}} \subset B_j$. Indeed,
        $$
        2r_{j+1} = 2 \sigma^{j+1}\frac{r}{2} \leq \frac{1}{2\tilde{c}}\sigma^j \frac{r}{2}= \frac{r_j}{2 \tilde{c}}.
        $$
        Hence, we can apply Corollary~\ref{s2 corol}, with $a \equiv \omega(r_j)/2$ and $b \equiv \exp(k)$ there, obtaining that
		\begin{eqnarray*}	
			\,\dashint_{2B_{j+1}}|v-(v)_{2B_{j+1}}|^p\,{\rm d}\xi 
			&\leq &  \textbf{c}+ \textbf{c}d^{1-p}\left(\frac{r_{j+1}}{r_j}\right)^{sp}\left\{1+[\textup{Tail}(u_j;\xi_0,r_j)]^{p-1}\right\}\\*
			& & +\,\textbf{c}(2r_{j+1})^{sp}\|f\|_{L^\infty(B_r)}\,\dashint_{4B_{j+1}}(u_j(\xi)+d)^{1-p} \, \,{\rm d}\xi\\*[0.5ex]
			& =:& I_1 + I_2.
		\end{eqnarray*}
		
		In view of the estimate in~(\ref{s4 6}), we can estimate the first term in the right-hand side of the display above as follows
		\begin{eqnarray*}
			I_1 & \leq &  \textbf{c} + \textbf{c}\sigma^{sp}d^{1-p}\{ 1+ \sigma^{-\alpha(p-1)}[\omega(r_j)]^{p-1}\} \\*[0.5ex]
			&= & \textbf{c} + \textbf{c}\sigma^{sp}d^{1-p}\{ 1+ \sigma^{\alpha(p-1)(j-1)}[\omega(r_0)]^{p-1}\},
		\end{eqnarray*}
		where $\textbf{c}$ depends only on $n,p,s$ and $\alpha$. Moreover, choosing
		$$
		d = \sigma^{\frac{sp}{p-1}}.
		$$ 
		we arrive at 
		\begin{equation}\label{stima I_1}
			I_1\, \leq\, \textbf{c}+\textbf{c}\sigma^{\alpha(p-1)(j-1)}[\omega(r_0)]^{p-1}\,=:\,C_1\,<\,\infty,
		\end{equation}
		where we used  the fact that $\alpha < sp/(p-1)$, $\sigma \leq \frac{1}{4\tilde{c}} \leq \frac{1}{4}$, $j >1$, and $\omega(r_0)<\infty$. 
		
		Let us estimate the second integral $I_2$. Notice that since $\sigma < 1/4$, we have
		$$
		4r_{j+1} \,=\, 4 \sigma^{j+1}\frac{r}{2} \,\leq\, \sigma^j \frac{r}{2}\,=\,r_j,
		$$
		so that $4B_{j+1} \subseteq B_j$, and consequently 
		$u_j \geq 0$ on $4B_{j+1}$. Then, in view of the choice of~$d$, we also have $(u_j+d)^{1-p} \leq \sigma^{-sp}$.  
		We arrive at
		\begin{equation}\label{stima I_2}
			I_2 \leq \textbf{c}\sigma^{sp(j-1)}(r/4)^{sp}\|f\|_{L^\infty(B_r)}=:C_2 < \infty.
		\end{equation}
		Hence from \eqref{stima I_1} and \eqref{stima I_2} we obtain
		\begin{equation}
			\label{s4 8}
			\,\dashint_{2B_{j+1}}|v-(v)_{2B_{j+1}}|^p\,{\rm d}\xi \leq C.
		\end{equation}
		for a suitable positive constant~$C$. 
		
		Denote by shortness $\tilde{B}=2B_{j+1}$. In accordance with~(\ref{s4 4}) and~(\ref{s4 7}) we can write
		\begin{align*}
			k & = \frac{1}{|\tilde{B} \cap \{u_j \geq \omega(r_j)/2\}|}\int_{\tilde{B} \cap \{u_j \geq \omega(r_j)/2\}}k \,{\rm d}\xi\\*[0.5ex]
			& = \frac{1}{|\tilde{B} \cap \{u_j \geq \omega(r_j)/2\}|}\int_{\tilde{B} \cap \{v=0\}}k \,{\rm d}\xi \\*[0.5ex]
			& \leq \frac{2}{|\tilde{B}|} \int_{\tilde{B}}(k-v)\,{\rm d}\xi= 2[k-(v)_{\tilde{B}}]\,.
		\end{align*}
		We then integrate the inequality above over the set $\tilde{B} \cap \{v=k\}$ to get
		\begin{align*}
			\frac{|\tilde{B}\cap \{v=k\}|}{|\tilde{B}|}k & \leq \frac{2}{|\tilde{B}|}\int_{\tilde{B}\cap \{v = k\}}[k-(v)_{\tilde{B}}]\,{\rm d}\xi\\*[0.5ex]
			& \leq \frac{2}{|\tilde{B}|}\int_{\tilde{B}}|v-(v)_{\tilde{B}}|\,{\rm d}\xi \leq \textbf{c},
		\end{align*}
		where we also used~\eqref{s4 8}. 
		\vspace{2mm}
		
		We now take
		\begin{eqnarray*}
			k &=& \log \left(\frac{\omega(r_j)/2+ \sigma^{\frac{sp}{p-1}}}{2  \sigma^{\frac{sp}{p-1}}\omega(r_j)+ \sigma^{\frac{sp}{p-1}}}\right) \\*[0.5ex]
			&=&\log\left(\frac{\omega(r_j)}{4 \sigma^{\frac{sp}{p-1}}\omega(r_j)+ 2\sigma^{\frac{sp}{p-1}}}+ \frac{ \sigma^{\frac{sp}{p-1}}}{2 \sigma^{\frac{sp}{p-1}}\omega(r_j)+ \sigma^{\frac{sp}{p-1}}}\right) \ \approx\  \log \left(\frac{1}{ \sigma^{\frac{sp}{p-1}}}\right),
		\end{eqnarray*}
		so that 
		$$
		\frac{|\tilde{B}\cap \{v=k\}|}{|\tilde{B}|}k\, \leq\, \textbf{c}
		$$
		gives
		\begin{equation}
			\label{s4 10}
			\frac{|\tilde{B} \cap \{u_j \leq 2 d \omega(r_j)\}|}{|\tilde{B}|} 
			\,\leq\, \frac{\textbf{c}}{k}
			\,\leq\, \frac{c_{\log}}{\log \left(\frac{1}{\sigma}\right)};
		\end{equation}
		the constant $c_{\log}$ depending only on $n,p$ and $s$.
		\vspace{2mm}
		
		We are now in a position to start a suitable iteration to deduce the desired oscillation reduction. First, for any $i=0,1,2,\dots$, we define
		$$
		\rho_i = r_{j+1}+2^{-i}r_{j+1}, \quad \tilde{\rho}_i:= \frac{\rho_{i+1}+\rho_i}{2}, \quad B^i=B_{\rho_i}, \quad \tilde{B}^{i}=B_{\tilde{\rho}_i},
		$$ 
		and corresponding cut-off functions
		$$
		\phi_i \in C^\infty_0(\tilde{B}^i), \quad 0 \leq \phi_i \leq 1, \quad \phi_i \equiv 1 \mbox{ on } B^{i+1} \quad \mbox{ and } |\nabla_{\h^n} \phi_i| \leq \textbf{c}\rho_i^{-1}.
		$$
		Furthermore, set
		$$
		k_i = (1+2^{-i}) d \omega(r_j), \qquad w_i := (k_i-u_j)_+,
		$$
		and
		$$
		A_i = \frac{|B^i \cap \{u_j \leq k_i\}|}{|B^i|}= \frac{|B^i \cap \{w_i >0\}|}{|B^i|}.
		$$
		Hence, the Caccioppoli inequality~\eqref{caccioppoli}  yields
		\begin{eqnarray}
			\label{s4 11}
			&& \int_{B^i}\int_{B^i}|\eta^{-1}\circ \xi|^{-Q-sp}_{\h^n}|w_i(\xi)\phi_i(\xi)-w_i(\eta)\phi_i(\eta)|^p \,{\rm d}\xi \,{\rm d}\eta\notag\\*[0.5ex]
			&&\quad \leq \textbf{c}\int_{B^i}\int_{B^i}|\eta^{-1}\circ \xi|^{-Q-sp}_{\h^n}w^p_i(\xi)|\phi_i(\xi)-\phi(\eta)|^p\,{\rm d}\xi \,{\rm d}\eta\\*
			&&\qquad  + \textbf{c}\int_{B^i}w_i(\xi)\phi^p_i(\xi)\,{\rm d}\xi \left(\sup_{\eta \in \tilde{B}_i}\int_{\h^n\smallsetminus B^i}w_i^{p-1}(\xi)|\eta^{-1}\circ \xi|^{-Q-sp}_{\h^n}\,{\rm d}\xi\, +\, \|f\|_{L^\infty(B_r)}\right)\notag.
		\end{eqnarray}
		\vspace{2mm}
		
		We now focus on the subcritical case when~$sp<Q$. We make use of the fractional Sobolev inequality, in order to estimate the first term on the right-hand side as follows
		\begin{eqnarray}
		& &	\label{s4 12}
			A_{i+1}^\frac{p}{p^*}(k_i-k_{i+1})^p \nonumber\\*
             & = &\frac{1}{|B^{i+1}|^\frac{p}{p^*}}\left(\int_{B^{i+1} \cap \{u_j \leq k_{i+1}\}}(k_i-k_{i+1})^{p^*}\phi_i^{p^*}(\xi)\,{\rm d}\xi\right)^\frac{p}{p^*}\notag\\*[0.5ex]
			& \leq& \frac{1}{|B^{i+1}|^\frac{p}{p^*}}\left(\int_{B^i }w_i^{p^*}(\xi)\phi_i^{p^*}(\xi)\,{\rm d}\xi\right)^\frac{p}{p^*}\notag\\*[0.5ex]
			& \leq& \textbf{c}r^{sp-Q}_{j+1}\int_{B^i}\int_{B^i}|\eta^{-1}\circ \xi|^{-Q-sp}_{\h^n}|w_i(\xi)\phi_i(\xi)-w_i(\eta)\phi_i(\eta)|^p \,{\rm d}\xi \,{\rm d}\eta.
		\end{eqnarray}
		Recalling that $|\nabla_{\h^n} \phi_i|\leq \textbf{c}2^ir^{-1}_{j+1}$, we can treat the first term in the right-hand side in~\eqref{s4 11} as follows
		\begin{eqnarray}
			\label{s4 13}
			&& r_{j+1}^{sp}\int_{B^i}\int_{B^i}|\eta^{-1}\circ \xi|^{-Q-sp}_{\h^n}w^p_i(\xi)|\phi_i(\xi)-\phi(\eta)|^p\,{\rm d}\xi \,{\rm d}\eta\notag\\*[0.5ex]
			&&\hspace{5cm}\qquad \leq \textbf{c}2^{ip} r^{-p}_{j+1}r^p_{j+1}\int_{B^i \cap \{u_j \leq k_i\}}w^p_i(\xi)\,{\rm d}\xi\notag\\*[0.5ex]
			&&\hspace{5cm}\qquad \leq \textbf{c}2^{ip}[d\omega(r_j)]^p|B^i \cap \{u_j \leq k_i\}|.
		\end{eqnarray}
		Moreover, 
		\begin{equation}
			\label{s4 14}
			\int_{B^i}w_i(\xi)\phi^p_i(\xi)\,{\rm d}\xi\, \leq\, \textbf{c}[d\omega(r_j)]|B^i \cap \{u_j \leq k_i\}|,
		\end{equation}
		holds.  
		
		Now, we notice that for any $\eta \in \tilde{B}^i$, we have
		$$
		\frac{|\xi^{-1}\circ \xi_0|_{\h^n}}{|\eta^{-1}\circ \xi|_{\h^n}} \leq \frac{\tilde{c}(|\eta^{-1}\circ \xi_0|_{\h^n}+|\eta^{-1}\circ \xi|_{\h^n})}{|\eta^{-1}\circ \xi|_{\h^n}} \leq \tilde{c} + \frac{\tilde{c}\tilde{\rho}_i}{\rho_i-\tilde{\rho}_i}\leq \textbf{c}2^i
		$$
		for all $\xi \in \h^n \smallsetminus B^i$  and that
		$$
		B_{r_{j+1}} \equiv B_{j+1} \subset B^i\, \Rightarrow\, \h^n \smallsetminus B^i \subset \h^n \smallsetminus B_{j+1}.
		$$
		Then we have 
		\begin{equation}
			\label{s4 15}
			r_{j+1}^{sp}\left(\sup_{\eta \in \tilde{B}^i}\int_{\h^n \smallsetminus B^i}w_i^{p-1}(\xi)|\eta^{-1}\circ \xi|_{\h^n}^{-Q-sp}\,{\rm d}\xi\right)\, \leq\, \textbf{c}2^{i(Q+sp)}[\textup{Tail}(w_i;\xi_0,r_{j+1})]^{p-1}
		\end{equation}
		Recalling (\ref{s4 6}) and the facts that $w_i \leq 2d\omega(r_j)$ in $B_j$ and $w_i \leq |u_j|+2d\omega(r_j)$ in $\h^n$, we further get
		\begin{eqnarray*}
			&&\left[\textup{Tail}(w_i;\xi_0,r_{j+1})\right]^{p-1}\\*[0.5ex]
			&&\qquad \leq \textbf{c}r_{j+1}^{sp}\int_{B_j\smallsetminus B_{j+1}}w_i^{p-1}(\xi)|\xi^{-1}_0\circ \xi|_{\h^n}^{-Q-sp}\,{\rm d}\xi + \textbf{c}\left(\frac{r_{j+1}}{r_j}\right)^{sp}[\textup{Tail}(w_i;\xi_0,r_j)]^{p-1}\\*[0.5ex]
			&&\qquad \leq \textbf{c}d^{p-1}\omega(r_{j})^{p-1}+\textbf{c}\sigma^{sp}[\textup{Tail}(u_j;\xi_0,r_j)]^{p-1}\\*[0.5ex]
			&&\qquad \leq \textbf{c}\left(1+\frac{\sigma^{sp-\alpha(p-1)}}{d^{p-1}}\right)[d\omega(r_j)]^{p-1}\\*[0.5ex]
			&&\qquad \leq \textbf{c} [d\omega(r_j)]^{p-1},
		\end{eqnarray*}
		where $\textbf{c}$ depends only on $\alpha,p$ and $s$. Combining the  estimate above with~(\ref{s4 15}) we get
		\begin{equation}
			\label{s4 16}
			r_{j+1}^{sp}\left(\sup_{\eta \in \tilde{B}^i}\int_{\h^n \smallsetminus B^i}w_i^{p-1}(\xi)|\eta^{-1}\circ \xi|_{\h^n}^{-Q-sp}\,{\rm d}\xi\right)\leq \textbf{c}2^{i(Q+sp)}[d\omega(r_j)]^{p-1}.
		\end{equation}
		Putting together (\ref{s4 11}), (\ref{s4 12}), (\ref{s4 13}), (\ref{s4 14}) and (\ref{s4 16}), we obtain 
		\begin{equation}
			\label{s4 17}
			A_{i+1}^\frac{p}{p^*}(k_i-k_{i+1})^p \,\leq\, \textbf{c}\big(1+r_{j+1}^{sp}\|f\|_{L^\infty(B_r)}\big)2^{i(Q+sp+p)}[d\omega(r_j)]^pA_i,
		\end{equation}
		which yields, recalling that $r_{j+1} < r$
		$$
		A_{i+1} \leq \textbf{c}\big(1+r^{sp}\|f\|_{L^\infty(B_r)}\big)^\frac{p^*}{p}2^{i(Q+(2+s)p)\frac{p^*}{p}}A_i^{1+\beta}
		$$
		with $\beta:= sp/(Q-sp)$ by the definition of $k_i$'s. 
		
		Now, if we will prove the following estimate on $A_0$,
		\begin{equation}
			\label{s4 18}
			A_0 = \frac{|\tilde{B} \cap \{u_j \leq 2d \omega(r_j)\}|}{|\tilde{B}|}
			\, \leq\, \textbf{c}^{-1/\beta}\big(1+r^{sp}\|f\|_{L^\infty(B_{r})}\big)^{-\frac{p^*}{p\beta}}2^{-\frac{(Q+(2+s)p)p^*}{p\beta^2}} =: \nu^*,
		\end{equation}
		then we can deduce that
		$$
		A_i \to 0 \ \mbox{ as }  \ i \to \infty.
		$$
		Indeed, the condition~(\ref{s4 18}) it is guaranteed by (\ref{s4 10}) choosing
		$$
		\sigma = \min\left\{\frac{1}{4\tilde{c}},\,\text{e}^{-\frac{c_{\log}}{\nu^*}}\right\},
		$$
		which depends only on $n,p,s$ and $\alpha$. We have hence shown that 
		$$
		\mathop{\textup{osc}}\limits_{B_{j+1}} \, u \, \leq \, (1-d)\omega(r_j) 
		\,= \, (1-d)\left(\frac{r_j}{r_{j+1}}\right)^{\alpha}\!\omega(r_{j+1}) 
		\,= \, (1-d)\sigma^{-\alpha}\omega(r_{j+1}).
		$$
		Taking $\alpha \in \left(0, \frac{sp}{p-1}\right)$ small enough that
		$$
		\sigma^\alpha \geq 1-d=1-\sigma^\frac{sp}{p-1}
		$$ 
		leads to
		$$
		\mathop{\textup{osc}}\limits_{B_{j+1}} \, u \leq \omega(r_{j+1}),
		$$
		and the proof is complete in the case when~$sp<Q$.
		\vspace{2mm}
		
		For the remaining case, namely when~$sp =Q$, we can proceed as in the proof of the local boundedness. Consider $0<s_1<s<1$; the fractional Sobolev inequality gives
		$$
		\|w_i\p_i\|_{L^{p_1^*}(B^i)}^p \leq c[w_i\p_i]_{s_1,p}^p
		$$
		with $p_1:= \frac{Qp}{Q-s_1p}$.  
		Note also that
		\begin{equation}\label{immersion}
			\|w_i\p_i\|_{L^q(B^i)}  \leq |B^i|^\frac{p_1^*-q}{q p_1^*}\|w_i\p_i\|_{L^{p_1^*}(B^i)},
		\end{equation} 
		and thus,
		\begin{align}\label{s4 19}
			&A_{i+1}^\frac{p}{q}(k_i-k_{i+1})^p\nonumber\\*
        & = \frac{1}{|B^{i+1}|^\frac{p}{q}}\left(\int_{B^{i+1} \cap \{u_j \leq k_{i+1}\}}(k_i-k_{i+1})^{q}\phi_i^{q}(\xi)\,{\rm d}\xi\right)^\frac{p}{q}\notag\\*[0.5ex]
			&\leq \frac{1}{|B^{i+1}|^\frac{p}{q}}\left(\int_{B^i }w_i^{q}(\xi)\phi_i^{q}(\xi)\,{\rm d}\xi\right)^\frac{p}{q}\notag\\*[0.5ex]
			& \leq \frac{|B^i|^\frac{(p_1^*-q)p}{q p_1^*}}{|B^{i+1}|^\frac{p}{q}} \left(\int_{B^i }w_i^{p_1^*}(\xi)\phi_i^{p_1^*}(\xi)\,{\rm d}\xi\right)^\frac{p}{p_1^*}\notag\\*[0.5ex]
			&\leq c \frac{r_{j+1}^{s_1p}}{r_{j+1}^Q}\int_{B^i}\int_{B^i}|\eta^{-1}\circ \xi|_{\h^n}^{-Q-s_1p}|w_i(\xi)\p_i(\xi)-w_i(\eta)\p_i(\eta)|^p \,{\rm d}\xi \,{\rm d}\eta.
		\end{align}
		The term in the right-hand side of~\eqref{s4 19} can be estimated using the nonlocal Caccioppoli inequality as in~\eqref{s4 11} and recalling the estimates \eqref{s4 14} and \eqref{s4 16}. 
		
		All in all, we have
		\begin{equation*}
			A_{i+1}^\frac{p}{q}(k_i-k_{i+1})^p 
			\,\leq\, \textbf{c}(1+r_{j+1}^{sp}\|f\|_{L^\infty(B_r)})2^{i(Q+sp+p)}[d\omega(r_j)]^pA_i,
		\end{equation*}
		which yields 
		\begin{equation}\label{s4 23}
			A_{i+1} \, \leq\, \textbf{c}\big(1+r^{sp}\|f\|_{L^\infty(B_r)}\big)^\frac{q}{p}2^{i\frac{(Q+(2+s)p)q}{p}}A_i^{1+\beta}, 
		\end{equation}
		where we denoted by $\beta := q/p-1>0$. We now choose the following~$\sigma$ in~\eqref{s4 10},
		$$
		\sigma:= \min\left\{\frac{1}{4\tilde{c}},\, {\text{e}}^{-\frac{c_{{\textup{log}}}}{\bar{\nu}}}\right\}
		$$
		with  $\bar{\nu}:= \textbf{c}^{-\frac{1}{\beta}}(1+r^{sp}\|f\|_{L^\infty(B_r)})^{-\frac{q}{p\beta}}2^{-\frac{(Q+(2+s)p)q}{p\beta^2}}$, in order to deduce that
		$$
		A_0 = \frac{|\tilde{B}\cap \{u_j \leq 2d\omega(r_j)\}|}{|\tilde{B}|}
		\, \leq\, \textbf{c}^{-\frac{1}{\beta}}\big(1+r^{sp}\|f\|_{L^\infty(B_r)}\big)^{-\frac{q}{p\beta}}2^{-\frac{(Q+(2+s)p)q}{p\beta^2}}.
		$$
		Hence,
		$$
		A_i \rightarrow 0, \qquad \mbox{as} \qquad i \rightarrow \infty.
		$$
		and the proof is complete, by proceeding exactly as in the case when $sp < Q$.
	\end{proof}
	
	\vspace{4mm}
{\bf	Ethics declarations.}  
\\ On behalf of all authors, the corresponding author states that there is {\bf no} conflict of interest.
	
	%
	%

	\vspace{5mm} 
	
	\vspace{5mm}


\begin{thebibliography}{}
	

		
		\bibitem{AM18} {\sc Adimurthi, A. Mallick}:
		{A Hardy type inequality on fractional order Sobolev spaces on the Heisenberg group}.
		{\it Ann. Sc. Norm. Super. Pisa Cl. Sci. (5)} {\bf 18} (2018), no. 3, 917--949.   
		\vs
		 
		\bibitem{BFS17} {\sc Z.~\!M. Balogh, K. F\"assler, H. Sobrino}: Isometric embeddings into Heisenberg groups. {\it Geometriae Dedicata}~{\bf 195} (2017), no. 1, 163--192
		\vs
		 
		
		\bibitem{BL17} {\sc L. Brasco, E. Lindgren}:
		Higher Sobolev regularity for the fractional $p$-Laplace equation in the superquadratic case. {\it Adv. Math.} {\bf 304} (2017),  300--354.
		
		\vs
		
		\bibitem{BLS18} {\sc L. Brasco, E. Lindgren, A. Schikorra}: Higher H\"older regularity for the fractional $p$-Laplacian in the superquadratic case. {\it Adv. Math.}~{\bf 338} (2018), 782--846.
		\vs
		
		\bibitem{BLU07} {\sc A. Bonfiglioli, E. Lanconelli, F. Uguzzoni}:
		{\it Stratified Lie Groups and their sub-Laplacians}.
		Springer Monographs in Mathematics, 2007. 
		\vs
		 
		
			
	\bibitem{BDV20} {\sc C. Bucur, S. Dipierro, E. Valdinoci}: {On the mean value property of fractional harmonic functions}. {\it Nonlinear Anal.} {\bf 201} (2020), Art. 112112.
	\vs
	
	\bibitem{BS21} {\sc C. Bucur, M. Squassina}:
	An Asymptotic Expansion for the Fractional $p$-Laplacian and for Gradient Dependent Nonlocal Operators.
	{\it Commun. Contemp. Math.} {\bf 24} (2022), Art. 2150021.
	\vs
		
		\bibitem{CS07} {\sc L. Caffarelli, L. Silvestre}: An extension problem related to the fractional Laplacian. 
		{\it Comm. Partial Differential Equations} {\bf 32}~(2007), 1245--1260.
		\vs
		
		\bibitem{CK08} {\sc Z.-Q. Chen; T. Kumagai}: Heat kernel estimates for jump processes of mixed types on metric measure spaces. {\it Probab. Theory Related Fields} {\bf 140} (2008), no. 1-2, 277--317.
		\vs
		
		
		\bibitem{CCR15}{\sc P. Ciatti, M.~\!G. Cowling, F. Ricci}: Hardy and uncertainty inequalities on stratified Lie groups. {\it Adv. Math.} {\bf 277}~(2015), 365--387.
		\vs
		
		
		\bibitem{CT16} {\sc E. Cinti, J. Tan}:
		{A nonlinear Liouville theorem for fractional equations in the Heisenberg group}.
		{\it J. Math. Anal. Appl.} {\bf 433} (2016), 434--454.
		\vs
		
		 
		
		
		\bibitem{CMPS16} {\sc G. Citti, M. Manfredini, A. Pinamonti, F. Serra~Cassano}:
		{Poincar\'e-type inequality for Lipschitz continuous vector fields in the Heisenberg group}.
		{\it J. Math. Pure Appl.} {\bf 103} (2016), 265--292.
		\vs
		
		 		
		\bibitem{Cyg81} {\sc J. Cygan}:
Subadditivity of homogeneous norms on certain nilpotent Lie groups. 
{\it Proc. Amer. Math. Soc.} {\bf 83} (1981), no. 1, 69--70. 
\vs
		
		\bibitem{DGP11} {\sc D. Danielli, N. Garofalo, N.~\!C. Phuc}:
		Hardy-Sobolev type inequalities with sharp constants in Carnot-Carath\'eodory spaces.
		{\it Potential Anal.} {\bf 34}~(2011), 223--242.
		\vs
		
		\bibitem{DM20}{\sc C. De Filippis, G. Mingione}: On the regularity of minima of non-autonomous functionals. {\it J. Geom. Anal.} {\bf 30}~(2020), no. 2, 1584--1626.
		\vs
		
		\bibitem{DM20b}{\sc C. De Filippis, G. Mingione}: Manifold constrained non-uniformly elliptic problems. {\it J. Geom. Anal.} {\bf 30}~(2020), no. 2, 1661--1723.
		
		\vs
		
		\bibitem{DFP19} {\sc C. De Filippis, G. Palatucci}: H\"older regularity for nonlocal double phase equations. 
		{\it J. Differential Equations} {\bf 267} (2019), no.~1, 547--586.
		\vs
		
		 
		
		
		\bibitem{DKP14} {\sc A. Di Castro, T. Kuusi, G. Palatucci}: Nonlocal Harnack inequalities. {\it J. Funct. Anal.}~{\bf 267}~(2014), no. 6, 1807--1836.
		\vs
		
		\bibitem{DKP16} {\sc A. Di Castro, T. Kuusi, G. Palatucci}: Local behavior of fractional $p$-minimizers.
		{\it Ann. Inst. H. Poincar\'e Anal. Non Lin\'eaire} {\bf  33} (2016), 1279--1299.
		\vs
		
		
		\bibitem{DPV12} {\sc E. Di Nezza, G. Palatucci, E. Valdinoci}: Hitchhiker's guide to the fractional Sobolev spaces. {\it Bull. Sci. Math.} {\bf 136}~(2012), 521--573.
		\vs
			
		\bibitem{DLV21} {\sc B. Dyda, J. Lehrb\"ack,  A. V. V\"ah\"akangas}: Fractional Poincar\'e and localized Hardy inequalities on metric spaces.	{\it Adv. Calc. Var}, \href{https://doi.org/10.1515/acv-2021-0069}{DOI:~10.1515/acv-2021-0069}
		\vs 
		
		\bibitem{FF15} {\sc F. Ferrari, B. Franchi}:
		{Harnack inequality for fractional Laplacians in Carnot groups}.
		{\it Math. Z.} {\bf 279}~(2015), 435--458.
		\vs
		
				\bibitem{FMPPS18} {\sc F. Ferrari, M. Miranda Jr., D. Pallara, A. Pinamonti, Y. Sire}:
		{Fractional Laplacians, perimeters and heat semigroups in Carnot groups}.
		{\it Discrete Cont. Dyn. Sys - Series S} {\bf 11}~(2018),  no.~3, 477--491.
		\vs
		
		 
		
		\bibitem{FS82} {\sc G. B. Folland, E. M. Stein}:
		{\it Hardy spaces on homogeneous groups}.
		Princeton University Press, 1982.
		\vs
		 
		
		\bibitem{FGMT15}  {\sc R. Frank, M.~\!d.~\!M.~Gonzalez, D. Monticelli, J. Tan}:
		{An extension problem for the CR fractional Laplacian}.
		{\it Adv. Math.} {\bf 270}~(2015), 97--137.
		\vs
		%
		
		\bibitem{FLS08}  {\sc R. Frank, E.~\!H. Lieb, R. Seiringer}: Hardy-Lieb-Thirring inequalities for fractional Schr\"odinger
		operators.
		{\it J. Amer. Math. Soc.} {\bf 21}~(2008) 925--950.
		\vs
		 
		\bibitem{GT21}{\sc N. Garofalo, G. Tralli}: A class of nonlocal hypoelliptic operators and their extensions. 
		 {\it Indiana J. Math.} {\bf 70} (2022), no.~5, 1717--1744.
		\vs
		
		
		
		
		\bibitem{GT21b}{\sc N. Garofalo, G. Tralli}: Feeling the heat in a group of Heisenberg type. 
		{\it Adv. Math.}~{\bf 381}, (2021). Art.~107635.    
		\vs
		 

\bibitem{GT11} {\sc C.~\!E.~Guti\'errez, F.~Tournier}: Harnack inequality for a degenerate elliptic equation. 
{\it Comm. Partial Differential Equations}~{\bf 36} (2011), 2103--2116.
		\vs
		
		
		\bibitem{KS18} {\sc A. Kassymov, D. Surgan}:
		{Some functional inequalities for the fractional $p$-sub-Laplacian}. \href{https://arxiv.org/abs/1804.01415v2}{\it arXiv:1804.01415} (2018).    
		\vs
		
		
		\bibitem{KS20} {\sc A. Kassymov, D. Surgan}:
		{Lyapunov-type inequalities for the fractional p-sub-Laplacian}. {\it Adv. Oper. Theory} {\bf 5}~(2020),
		435--452.
		\vs
		
		\bibitem{KKL19} {\sc J. Korvenp\"a\"a, T. Kuusi, E. Lindgren}: Equivalence of solutions to fractional $p$-Laplace type equations.  
		{\it J. Math. Pures Appl.} {\bf 132} (2019), 1--26.
		\vs
		
		
		\bibitem{KKP16} {\sc J. Korvenp\"a\"a, T. Kuusi, G. Palatucci}: The obstacle problem for nonlinear integro-differential operators. 
		{\it Calc. Var. Partial Differential Equations} {\bf 55} (2016), no.~3, Art.~63.
		\vs
		
		
		
		\bibitem{KKP17} {\sc J. Korvenp\"a\"a, T. Kuusi, G. Palatucci}: Fractional superharmonic functions and the Perron method for nonlinear integro-differential equations. {\it Math. Ann.} {\bf 369} (2017), no.~3-4, 1443--1489.\!\!\!
		\vs
		%
		
		\bibitem{KMS15} {\sc T. Kuusi, G. Mingione,  Y. Sire}: Nonlocal self-improving properties. {\it Anal. PDE} {\bf 8} (2015), no.~1, 57--114.
		\vs
		
		\bibitem{IMS16} {\sc A. Iannizzotto, S. Mosconi, M. Squassina}: Global H\"older regularity for the fractional $p$-Laplacian. {\it Rev. Mat. Iberoamericana} {\bf 32}~(2016), no.~4, 1353--1392.
		\vs
		
 		
		
		\bibitem{MM07} {\sc J. Manfredi, G. Mingione}:
		{Regularity results for quasilinear elliptic equations in the Heisenberg group}. 
		{\it Math. Ann.} {\bf 339}~(2007), 485--544.    
		\vs
		
		\bibitem{Min03} {\sc G. Mingione}:
		{The Singular Set of Solutions to Non-Differentiable Elliptic Systems}.
		{\it Arch. Ration. Mech. Anal.} {\bf 166} (2003), 287--301.
		\vs
	 		

	  \bibitem{MZ21} {\sc S. Mukherjee, X. Zhong}:
	   {$C^{1,\alpha}$-regularity for variational problems in the Heisenberg group}.
	   {\it Anal. PDE} {\bf 14}~(2021), no.~2, 567--594.
	   \vs
	
	 
		
		\bibitem{Pal18}{\sc G. Palatucci}: The Dirichlet problem for the $p$-fractional Laplace equation. 
		{\it Nonlinear Anal.} {\bf 177} (2018), 699--732.
		\vs
		
		\bibitem{PP14} {\sc G. Palatucci, A. Pisante}: Improved Sobolev embeddings, profile decomposition, and concentration-compactness for fractional Sobolev spaces. {\it Calc. Var. Partial Differential Equations} {\bf 50}~(2014), no.~3-4, 799--829.
		\vs
		
		\bibitem{PP21} {\sc G. Palatucci, M. Piccinini}:
		{Nonlocal Harnack inequalities in the Heisenberg group}.
		{\it Calc. Var. Partial Differential Equations}~{\bf 61}~(2022). Art.~185.		\vs
		
		
		\bibitem{PSV13} {\sc G. Palatucci, O. Savin, E. Valdinoci}: Local and Global minimizers for a variational energy involving a fractional norm. {\it Ann. Mat. Pura Appl.} {\bf 192}~(2013), no.~4, 673--718.\!
		\vs
		
		 \bibitem{Pic22} {\sc M. Piccinini}:
		{The obstacle problem and the Perron Method for nonlinear fractional equations in the Heisenberg group}.
		{\it Nonlinear Anal.}~222~(2022), Art. 112966.  
		\vs
				
		\bibitem{RT16} {\sc L. Roncal, S. Thangavelu}:
		{Hardy's inequality for fractional powers of the sublaplacian on the Heisenberg group}.
		{\it Adv. Math.} {\bf 302} (2016), 106--158.
		\vs
		%
		
		\bibitem{Sch16}{\sc A. Schikorra}: Nonlinear commutators for the fractional $p$-Laplacian and applications.   {\it Math. Ann.}~{\bf 366} (2016), no.~1, 695--720.
		\vs
		
		\bibitem{SM20} {\sc J.~M. Scott, T. Mengesha}:
		{Self-improving Inequalities for bounded weak solutions to nonlocal double phase equations}.
		{\it Comm. Pure Appl. Anal.}~{\bf 21} (2022), no. 1, 183--212.    
		\vs
		
		\bibitem{Sil06}{\sc L. Silvestre}: H\"older estimates for solutions of integro-differential equations like the fractional Laplace. {\it Indiana Univ. Math. J.} {\bf 55} (2006), no.~3, 1155--1174.
		\vs
		 
		
		\bibitem{Tra14}{\sc G. Tralli}: A certain critical density property for invariant Harnack inequalities in $H$-type groups. {\it J. Differential Equations}~{\bf 256} (2014), no.~2, 461--474.
	
	\vs
		
		\bibitem{WD20} {\sc X. Wang, G. Du}:
		{Properties of solutions to fractional $p$-subLaplace equations on the Heisenberg group}.
		{\it  Boundary Value Problems} (2020), Art.~128.
		\vs
		
		\bibitem{ZTR21} Y{\sc . Zhang; X. Tang; V.~\!D. R\v{a}dulescu}: Concentration of solutions for fractional double-phase problems: critical and supercritical cases. {\it J. Differential Equations} {\bf 302}~(2021), 139--184.
		\vs 
	
	\end{thebibliography}
\end{document}